
\documentclass[a4paper,12pt]{amsart}
\usepackage[latin1]{inputenc}
\usepackage{amsmath}
\usepackage{amsfonts}
\usepackage{amssymb}
\usepackage[shortlabels]{enumitem}
\usepackage{amsthm}
\newcommand*{\rom}[1]{\expandafter\@slowromancap\romannumeral #1@}

\usepackage{fourier} 
\usepackage{hyperref} 
\usepackage{xcolor} 
\usepackage[english]{babel}
\usepackage{float}
\usepackage{cleveref}
\usepackage{verbatim} 
\DeclareMathOperator*{\foo}{\scalerel*{+}{\sum}}

\usepackage{scalerel}
\crefformat{section}{\S#2#1#3} 
\crefformat{subsection}{\S#2#1#3}
\crefformat{subsubsection}{\S#2#1#3}

\newtheorem{theorem}{Theorem}[section]
\newtheorem{lemma}[theorem]{Lemma}
\newtheorem{prop}[theorem]{Proposition}
\newtheorem{corollary}[theorem]{Corollary}
\theoremstyle{definition}
\newtheorem{definition}[theorem]{Definition}
\newtheorem{example}[theorem]{Example}

\theoremstyle{remark}
\newtheorem{remark}[theorem]{Remark}

\newtheorem*{ques*}{Question}

\newcommand{\R}{\mathbb{R}}

\newcommand{\N}{\mathbb{N}}
\newcommand{\F}{\mathcal{F}}

\numberwithin{equation}{section}
\newcommand{\C}{\mathbb{C}}

\newcommand{\vphi}{\varphi}
\newcommand{\Fock}{\mathcal{F}({H})}

\newcommand{\bea}{\textcolor{black}}
\newcommand{\RomanNumeralCaps}[1]
    {\MakeUppercase{\romannumeral #1}}
\definecolor{darkblue}{rgb}{0.0, 0.0, 0.55} 

\title[SHARP DECAY ESTIMATES FOR FOKKER-PLANCK EQUATIONS]{PROPAGATOR NORM AND SHARP DECAY ESTIMATES FOR FOKKER-PLANCK EQUATIONS WITH LINEAR DRIFT}
\author{Anton Arnold}
\address{Institute for Analysis and Scientific Computing, TU Vienna, Wiedner Hauptstra\ss e 8-10, 1040 Vienna, Austria}
\email{anton.arnold@tuwien.ac.at}
\author{Christian Schmeiser}
\address{Faculty of Mathematics, University of Vienna, Oskar-Morgenstern-Platz 1, 1090 Vienna, Austria}
\email{Christian.Schmeiser@univie.ac.at}
\author{Beatrice Signorello}
\address{Institute for Analysis and Scientific Computing, TU Vienna, Wiedner Hauptstra\ss e 8-10, 1040 Vienna, Austria}
\email{bsignore@tuwien.ac.at}
\date{\today} 
\begin{document}

\maketitle
\begin{abstract}
We are concerned with the short- and large-time behavior of the $L^2$-propagator norm of Fokker-Planck equations with linear drift, i.e. $\partial_t f=\mathrm{div}_{x}{(D \nabla_x f+Cxf)}$. With a coordinate transformation these equations can be normalized such that the diffusion and drift matrices are linked as $D=C_S$, the symmetric part of $C$. The main result of this paper \bea{(Theorem \ref{maintheo})} is the connection between normalized Fokker-Planck equations and their drift-ODE $\dot x=-Cx$: Their $L^2$-propagator norms actually coincide. This implies that optimal decay estimates on the drift-ODE (w.r.t.\ both the maximum exponential decay rate and the minimum multiplicative constant) carry over to sharp exponential decay estimates of the Fokker-Planck solution towards the steady state.
A second application of the theorem regards the short time behaviour of the solution: The short time regularization (in some weighted Sobolev space) is determined by its hypocoercivity index, which has recently been introduced for Fokker-Planck equations and ODEs (see \cite{AE, AAC18, AAC}). 
\\ In the proof we realize that the evolution in each invariant spectral subspace can be represented as an explicitly given, tensored version of the corresponding drift-ODE. In fact, the Fokker-Planck equation can even be considered as the second quantization of $\dot x=-Cx$.
\end{abstract}
{\tiny
{KEYWORDS.}
Fokker-Planck equation, large-time behavior, sharp exponential decay, semigroup norm, regularization rate, second quantization}

\section{Introduction}\label{intro}
We are going to study the large-time and short-time behavior of the solution of Fokker-Planck (FP) equations with linear drift  
and possibly degenerate diffusion for $g=g(t,y)$:
\begin{align}
\label{FP}
& \partial_{t} g = -\widetilde{L}g := \mathrm{div}_{y}{(\widetilde{D} \nabla_y g+\widetilde{C}yg)}, \qquad y \in \R^d, \ t \in (0,\infty),
\\
& g(t=0)=g_0 \in L_+^1(\R^d) \,,
\\
& \int_{\R^d} g_0(y) dy=1 \,.
\end{align}
We assume that
\begin{itemize}
\item $\widetilde{D}\in \R^{d \times d}$  is non-\bea{zero}, positive semi-definite, symmetric, and constant in $y$,
\item $\widetilde{C} \in \R^{d \times d} $ is positive stable, (typically non-symmetric,) and constant in $y$.
\end{itemize}

The goal of this study is to investigate the qualitative and quantitative large time behavior of the solution of \eqref{FP}. Several authors (see, e.g., \cite{AE}, \cite{AMTU}, \cite{PRT}, \cite{AEW}) have addressed the following questions: Under which conditions is there a non trivial steady state $g_{\infty}$? In the affirmative case, does the solution $g(t)$ converge to the steady state for $t \rightarrow \infty$ in a suitable norm? Is the convergence exponential?

In particular, the large-time behavior of FP-equations has been treated in \cite{Shi} via spectral methods. Instead, entropy methods are used in \cite{AMTU}.
From these previous studies it is well known that (under some assumptions that will be defined in the next section) the solution $g(t)$ converges to the steady state $g_{\infty}$ with an exponential decay rate, up to a multiplicative constant greater than one. In the degenerate case, where the diffusion matrix $\widetilde {D}$ is non-invertible, this property of the solution is known as \emph{hypocoercivity,} as introduced in \cite{Vi09}. 

Optimal exponential decay estimates for the convergence of the solution to the steady state in both the degenerate and 
the non-degenerate cases \bea{have} been shown in \cite{AE}.
Special care is required when the eigenvalues of $\widetilde{C}$ with smallest real part are \textit{defective}. This situation 
is covered in \cite{AEW} and \cite{M}. In both cases, the sharpness of the estimate refers only to the exponential decay rate of the convergence of the solution. The issue of finding the best multiplicative constant in the decay estimate for FP-equations \eqref{FP} is still open. This is one of the topics of this paper.
Even for linear ODEs there are only partial results on this best constant, as for example in \cite{MM} and \cite{AAS}.
In particular, \cite{AAS} gives the explicit best multiplicative constant in the two-dimensional case for 
$\dot x=-C x$, where $C$ is a positive stable matrix. A very complete solution has been derived in \cite{GM} for a special 
case, the kinetic FP-equation with quadratic confining potential. There the propagator norm is computed explicitly. The result 
can be written as an exponential decay estimate with time dependent multiplicative constant, whose maximal value is the 
result we are looking for. A related result based on Phi-entropies can be found in \cite{DL}, where improved time 
dependent decay rates are derived.

The main result of this paper \bea{(Theorem \ref{maintheo})} is equality of the propagator norms of the PDE on the orthogonal complement of the space
of equilibria and of its associated drift ODE. The underlying norms are the $L^2$-norm weighted by the inverse of the 
equilibrium distribution for the PDE, and the Euclidian norm for the ODE. 
This has two main consequences: First, the sharp (exponential) decay of the PDE is reduced to the same, but much easier question on the ODE level. The second consequence is that the hypocoercivity index (see \cite{AE, AAC18, AAC}) of the drift matrix determines the short-time behavior (in the sense of a Taylor series expansion) both of the drift ODE and the FP-equation. As a further consequence for solutions of the FP-equation we determine the short-time regularization from the weighted $L^2$-space to a weighted $H^1$-space. This result can be seen as an illustration of the fact that for the FP-equation hypocoercivity is equivalent to hypoellipticity.
Finally, it is shown that the FP-equation can be considered as the second quantization of the drift ODE. This follows from the
proof of the main theorem, where the FP-evolution is decomposed on invariant subspaces, in each of which the evolution is 
governed by a tensorized version of the drift ODE.

The paper is organized as follows: In Section 2 we transform the FP-operator $\tilde{L}$ to an equivalent version $L$ such that $D=C_S$, the symmetric part of the drift matrix. The conditions for the existence of a unique positive steady state 
and for hypocoercivity are also set up. The main theorem is formulated in Section 3 together with the main consequences.
The proof of the main theorem requires a long preparation that is split into Sections 4 and 5. In Section 4 we derive a spectral decomposition for the FP-operator into finite-dimensional invariant subspaces. This allows to see an explicit link with the 
drift ODE $\dot{x}=-C x$. In order to make this link more evident, we work with the space of symmetric tensors, presented 
in Section 5. In Section 6 we give the proof of the main theorem as a corollary of the fact that the propagator norm on each subspace is an integer power of the propagator norm of the ODE evolution.
Finally, in Section 7 the FP-operator is rewritten in the second quantization formalism.


\section{\bea{Preliminaries and main result}}\label{sec2}
\label{sec:second}

\subsection{Equilibria -- normalized \bea{Fokker-Planck equation}}
The following theorem (from \cite{AE}, Theorem 3.1 or \cite{MPP02}, p.\ 41) states under which conditions on the matrices $\widetilde{D}$ and $\widetilde{C}$ there exists a unique steady state $g_{\infty}$ for (\ref{FP}) and it provides its explicit form. We denote the
\textit{spectral gap of $\widetilde{C}$} by $\mu(\widetilde{C}): = \min \{ \Re (\lambda): \text{ $\lambda$ is an eigenvalue of $\widetilde{C}$} \}$.
\begin{definition}
\label{conAtilde}
We say that \textit{Condition $\widetilde{A}$} holds for the Equation \eqref{FP}, iff 
\begin{enumerate}
\item the matrix $\widetilde{D}$ is symmetric, positive semi-definite,
\item there is no non-trivial $\widetilde{C}^T$-invariant subspace of $\mathrm{ ker}{\widetilde{D}}$,
\item the matrix $\widetilde{C}$ is positive stable, i.e. $\mu(\widetilde{C})>0$.
\end{enumerate}
\end{definition}

Note that condition (2) is known as Kawashima's degeneracy condition \cite{Kawashima} in the theory for systems
of hyperbolic conservation laws. It also appears in \cite{Hoermander} as a condition for hypoellipticity of FP-equations 
(see \cite[Section 3.3]{Vi09} for the connection to hypocoercivity).

\begin{theorem}[Steady state]
\label{steadystate}
There exist a unique ($L^1$-normalized) steady state $g_{\infty}\in L^1(\R^d)$ of (\ref{FP}), iff
\textit{Condition $\widetilde{A}$} holds. It is given by the (non-isotropic) Gaussian
\begin{equation}\label{nonisogau}
   g_{\infty}(y)=c_{K}\exp{\left(-\frac{y^T K^{-1}y}{2} \right )} \,,
\end{equation}
where the covariance matrix $K\in \R^{d \times d}$ is the unique, symmetric, and positive definite solution of the continuous Lyapunov equation
\begin{equation}
\label{Lyapeq}
2\widetilde{D}=\widetilde{C}K+K\widetilde{C}^T,
\end{equation}
and $c_{K}=(2\pi)^{-d/2}(\det K)^{-1/2}$ is the normalization constant.
\end{theorem}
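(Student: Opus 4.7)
My plan is to start with the Gaussian ansatz $g_\infty(y)=c_K\exp(-y^TK^{-1}y/2)$ with a symmetric positive definite $K$ to be determined, and read off the algebraic condition that $K$ must satisfy. Since $\nabla_y g_\infty=-K^{-1}y\,g_\infty$, the flux inside the divergence becomes $(\widetilde{C}-\widetilde{D}K^{-1})y\,g_\infty =: My\,g_\infty$. Expanding $\mathrm{div}_y(My\,g_\infty)$ and dividing by $g_\infty$ yields the polynomial identity $\mathrm{tr}(M)=y^T M^T K^{-1}y$ for every $y\in\R^d$. Matching constant and quadratic parts separately forces $\mathrm{tr}(M)=0$ together with vanishing of the symmetric part of $M^TK^{-1}$; the second condition rearranges to $MK+KM^T=0$, which is precisely the Lyapunov equation~\eqref{Lyapeq}, and the trace identity is then automatic by tracing $2\widetilde{D}K^{-1}=\widetilde{C}+K\widetilde{C}^T K^{-1}$.

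Next I would show that \emph{Condition $\widetilde{A}$} is exactly what yields a unique symmetric positive definite $K$ solving~\eqref{Lyapeq}. Positive stability (3) makes the Lyapunov integral
\[
K = 2\int_0^\infty e^{-\widetilde{C}t}\,\widetilde{D}\,e^{-\widetilde{C}^T t}\,dt
\]
absolutely convergent; this is the unique symmetric solution of the Lyapunov equation. Positive semi-definiteness is immediate from (1), and strict positivity relies on condition~(2): if $Kv=0$ for some $v\ne 0$, then $\widetilde{D}^{1/2}e^{-\widetilde{C}^T t}v\equiv 0$, and successive differentiation at $t=0$ places $\mathrm{span}\{v,\widetilde{C}^Tv,(\widetilde{C}^T)^2v,\ldots\}$ inside $\ker\widetilde{D}$, contradicting the Kawashima condition. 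With $K\succ 0$ in hand, the Gaussian $g_\infty$ is a bona fide $L^1$-normalized steady state once we set $c_K=(2\pi)^{-d/2}(\det K)^{-1/2}$.

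For uniqueness among all $L^1$-normalized steady states and for the converse implication, I would draw on the spectral/functional framework already developed in the cited literature. Uniqueness follows since $\widetilde{L}$, viewed on the weighted space $L^2(\R^d;g_\infty^{-1})$, is hypoelliptic (the H\"ormander/Kawashima hypothesis is exactly~(2)) and its kernel on this space is one-dimensional and spanned by $g_\infty$; any $L^1$ steady state can then be identified with a multiple of $g_\infty$ by approximation in the weighted norm. For necessity, I would argue by contrapositive: if~(3) fails, the drift admits a neutral or unstable subspace that either destroys integrability or creates a continuum of invariant measures; if~(2) fails, the Lyapunov integral above produces only a positive semi-definite $K$, and the formal Gaussian degenerates onto a proper subspace, so no genuine $L^1$-density on $\R^d$ can solve $\widetilde{L}g=0$.

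The main obstacle I foresee is the uniqueness/converse step in the degenerate, hypocoercive case: existence and the explicit Gaussian shape are essentially linear algebra via Lyapunov's formula, but ruling out non-Gaussian stationary distributions requires the full spectral picture of \cite{AE} or the probabilistic picture of \cite{MPP02}. Since the theorem is quoted verbatim from those works, I would expect the authors to simply invoke them for this last piece rather than reprove it here.
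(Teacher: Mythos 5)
The paper does not actually prove Theorem~\ref{steadystate}; it is quoted from \cite{AE} (Theorem 3.1) and \cite{MPP02}, so there is no in-paper argument to compare against. Your proposal is nonetheless a sound reconstruction of the constructive half: the Gaussian ansatz correctly reduces $\widetilde{L}g_\infty=0$ to the symmetric part of $M^TK^{-1}$ vanishing (equivalently $MK+KM^T=0$, i.e.\ the Lyapunov equation \eqref{Lyapeq}, with the trace condition following automatically), the integral representation $K=2\int_0^\infty e^{-\widetilde{C}t}\widetilde{D}e^{-\widetilde{C}^Tt}\,dt$ is the standard way to get the unique symmetric solution under positive stability, and your Krylov-subspace argument is exactly how the Kawashima condition (2) upgrades $K\succeq 0$ to $K\succ 0$. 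For the uniqueness of the steady state within $L^1$ and for the ``only if'' direction you defer to the spectral and hypoelliptic machinery of the cited references, which is honest and, given that the paper itself only cites the theorem, entirely in keeping with its treatment. The one point worth tightening is your contrapositive for the failure of condition (3): it is not enough to say the drift ``either destroys integrability or creates a continuum of invariant measures''; one needs a genuine argument (e.g.\ via the Lyapunov operator having a kernel when $\widetilde{C}$ has eigenvalues with nonpositive real part, or via the explicit solution of the associated Ornstein--Uhlenbeck semigroup) to exclude an $L^1$ steady state. As you note, that piece truly does come from \cite{AE} or \cite{MPP02}.
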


\bea{In the above theorem, the matrix $K$ can be represented analytically as
$$
  K=2 \int_0^\infty e^{-\widetilde{C}\tau}\widetilde{D} e^{-\widetilde{C}^T\tau} d\,\tau
$$
(see \cite{MPP02}, p.\ 41), and the numerical solution of \eqref{Lyapeq} can be obtained with the Matlab routine \emph{lyap}.}

Under Condition $\widetilde{A}$ the FP-equation \eqref{FP} can be rewritten (see Theorem $3.5$, \cite{AE}) as
\begin{equation}
\label{compactFP1}
\partial_{t} g=   \mathrm{div}_{y}{\left(g_{\infty} (\widetilde{D}+\widetilde{R}) \nabla_y{\left( \frac{g}{g_{\infty}}\right)} \right) }, 
   \qquad y \in \R^d, \ t \in (0,\infty),
\end{equation}
where $\widetilde{R} \in \mathbb \R^{d \times d }$ is the anti-symmetric matrix $\widetilde{R}=\frac{1}{2}\left (\widetilde{C}K-K\widetilde{C}^T \right )$.
The natural setting for the evolution equation \eqref{FP} is the weighted $L^2$-space $\mathcal{\widetilde{H}}:=L^2(\R^d, g_{\infty}^{-1})$ with the inner product 
$$\langle g_1,g_2 \rangle _{\mathcal{\widetilde{H}}}:=\int_{\R^d}g_1(y)g_2(y) \frac{dy}{g_{\infty}(y)} \,.$$

\bea{Using the notations $\widetilde V_0:=\operatorname{span}_\R\{g_\infty\}\subset \mathcal{\widetilde{H}}$ and $C:=K^{-1/2}\widetilde{C}K^{1/2}$ we can now formulate the main result of this paper:
\footnote{\bea{Note added in print: In the follow-up paper \cite{ASi}, Theorem \ref{maintheo1} was recently extended to FP-equations with time dependent coefficient matrices $\tilde D(t)$, $\tilde C(t)$, provided that all these FP-operators with fixed $t$ have the same steady state, i.e.\ if \eqref{Lyapeq} holds for all $t$ with a constant matrix $K$. In this extension the two propagators in \eqref{mainequality1} are replaced by the propagation operators that map the solution at time $t_1$ to the solution at time $t_2 \ge t_1$, both for the FP-equation and for the corresponding drift ODE  $\frac{d}{dt}x=-C(t)x$. $K$ being constant in time implies that the FP-normatization to \eqref{NormalizedFPwithC}, the spaces $\mathcal{H}$ and $\mathcal{\widetilde{H}}$, as well as the subspace decomposition in \S\ref{sec:4.1} are all time independent.}}
\begin{theorem}
\label{maintheo1}
Let Condition $\widetilde A$ hold for the FP-equation \eqref{FP}.
Then the propagator norms of the FP-equation \eqref{FP} and its corresponding drift ODE $\frac{d}{dt}x=-Cx$ are equal,
i.e.,
\begin{equation}
\label{mainequality1}
    \left\|e^{-\widetilde L t}\right\|_{\mathcal{B}(\widetilde V_0^{\perp})} = \left\|e^{-Ct}\right\|_{\mathcal{B}(\R^d)} \,, \qquad\forall t \geq 0 \,,
\end{equation}
where $\mathcal{B}(.)$ denotes the operator and spectral matrix norms (for more details see Definition \ref{def:propNorm} below).
\end{theorem}
The fact that \eqref{mainequality1} involves the matrix $C$ (and not $\widetilde C$), motivates to introduce the following coordinate transformation.
Using }
$x:=K^{-1/2}y$, $f(x):=(\det K)^{1/2}g(K^{1/2}x)$ transforms \eqref{FP} into
\begin{equation}
\label{NormalizedFPwithC}
\partial_{t} f=  -Lf:= \mathrm{div}_{x}{(D \nabla_x f + Cxf)} = \mathrm{div}_{x}{\left(f_{\infty}C\nabla_x{\left( \frac{f}{f_{\infty}}\right)} \right) } \,,  
\end{equation}
where $D:=K^{-1/2}\widetilde{D}K^{-1/2}$, 
and the steady state is the normalized Gaussian 
\begin{equation}\label{f-infty}
  f_\infty(x) = (2\pi)^{-d/2}e^{-|x|^2/2} \,. 
\end{equation}
This is due to the property
\begin{equation}\label{norm-prop}
    D = C_S := \frac{1}{2}\left(C+C^T\right) \,,
\end{equation}
which is a simple consequence of \eqref{Lyapeq}. We shall call a FP-equation \emph{normalized,} if the diffusion and drift 
matrices satisfy \eqref{norm-prop}.

For later reference we rewrite Condition $\tilde{A}$ in terms of the matrix $C$: 
\bea{
\begin{definition}
\label{conA}
We say that \textit{Condition $A$} holds for the Equation \eqref{NormalizedFPwithC}, iff 
\begin{enumerate}
\item the matrix $C_S$ is positive semi-definite,
\item there is no non-trivial $C^T$-invariant subspace of  \ $\mathrm{ker}{C_S}$.
\end{enumerate}
\end{definition}
}

\bea{
\begin{prop}
\label{Prop2.5}
The Equation \eqref{FP} satisfies Condition $\widetilde{A}$ \textit{iff} its normalized version \eqref{NormalizedFPwithC}
satisfies Condition $A$.
Moreover, Condition A implies that the matrix $C$ is positive stable, i.e. $\mu(C)>0$.
\end{prop}
}

\begin{proof}
Equivalence of \bea{the items ($1$) in Definitions \ref{conAtilde} and \ref{conA}} follows from $C_S=K^{-\frac{1}{2}}\widetilde{D} K^{-\frac{1}{2}}$.
For the second item, let us assume that $(2)$ \bea{in Definition \ref{conA}} does not hold. Then, there exist $v \in \mathrm{ker}{C_S}, v \neq 0 \in \R^d$ such that 
$$
0=C_S C^Tv=(K^{-1/2}\widetilde{D}K^{-1/2})(K^{1/2}\widetilde{C}^TK^{-1/2})v=K^{-1/2}\widetilde{D} \widetilde{C^T} (K^{-1/2}v).
$$
This implies $\widetilde{D} \widetilde{C}^T(K^{-1/2}v)=0$, since $K^{-1/2}>0$. 
But this is a contradiction to $(2)$ in Condition $\widetilde{A}$ since it holds that
$v \in \mathrm{ker}{C_S} \text{ iff } K^{-1/2}v \in \mathrm{ker}{\widetilde{D}}$. 
With a similar argument the reverse implication can be proven.

For the proof that Condition A implies positive stability of $C$ we refer to Proposition $1$ and Lemma $2.4$ in \cite{AAC18}.
\end{proof}

From now on we shall study the normalized equation \eqref{NormalizedFPwithC} on the normalized version 
$\mathcal{H}:=L^2\left(\mathbb{R}^d,f_{\infty}^{-1}\right)$ of the Hilbert space $\mathcal{\widetilde{H}}$. It is easily \linebreak checked 
that
	\begin{equation}
	\label{equivEQ}
	\|g(t)\|_{\widetilde{\mathcal{H}}}=\|f(t)\|_{\mathcal{H}}, \qquad \forall t \geq 0,
	\end{equation}
holds for the solutions $g$ and $f$ of \eqref{FP} and, respectively, \eqref{NormalizedFPwithC}. \bea{This implies that the propagator norms for $\widetilde L$ and $L$ are the same, and that the Theorems \ref{maintheo1} and \ref{maintheo} are equivalent.}

\subsection{Convergence to the equilibrium: hypocoercivity}
In \cite{AE}, a hypocoercive entropy method was developed to prove the exponential convergence to $f_{\infty}$, for the solution to \eqref{NormalizedFPwithC} with any initial datum $f_0 \in \mathcal{H}$.
It employed a family of relative entropies w.r.t.\ the steady state, i.e. $e_{\psi}(f(t)|f_{\infty})$ $:= \int _{\R^d} \psi \left ( \frac{f(t)}{f_{\infty}} \right ) \bea{f_{\infty}} dx$, where the convex functions $\psi$ are admissible entropy generators (as in \cite{AMTU} and \cite{BE}). 
\begin{definition}
\label{defdefective}
\bea{Given} $\mu(C):=\min\{ \mathrm{Re}{(\lambda)}: \lambda \text{ is an eigenvalue of $C$}\}$.
\begin{enumerate}
\item We call the matrix $C$ \textit{non-defective} if \bea{all the eigenvalues $\lambda$ with $\mathrm{Re}{(\lambda)}=\mu(C)$} are \textit{non-defective,} i.e.,
their algebraic and geometric multiplicities coincide.
\item We call a FP-equation \eqref{FP} (non-)defective if its drift-matrix $\widetilde{C}$ is 
(non-\-)defective, or equivalently, if the matrix $C$ in the normalized version \eqref{NormalizedFPwithC} is (non-)defective.
\end{enumerate}
\end{definition}

For non-defective FP-equations, the decay result from \cite{AE} provides \bea{on the one hand} the sharp exponential decay rate $\mu>0$, but, \bea{on the other hand, only a }sub-optimal multiplicative constant $c>1$. \bea{We give a slightly modified version of it:}
\begin{theorem}[Exponential decay of the relative entropy, \bea{Theorem 4.9, \cite{AE}}]
\label{theoFP}
Let $\psi$ generate an admissible entropy and let $f$ be the solution of (\ref{NormalizedFPwithC}) with normalized initial state $f_0\in L^1_+(\R^d)$ such that $e_{\psi}(f_0|f_{\infty})<\infty$. Let $C$ satisfy Condition $A$.
Then, if the FP-equation is non-defective, there exists a constant $c\geq 1$ such that
\begin{equation}
\label{entropydecay}
e_{\psi}(f(t)|f_{\infty}) \leq c^2e^{-2\mu t} e_{\psi} (f_0|f_{\infty}), \quad t \geq 0.
\end{equation}
\end{theorem}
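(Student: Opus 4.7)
The plan is to adapt the hypocoercive entropy method of \cite{AE}. Since the diffusion matrix $D=C_S$ may be degenerate, the standard Bakry--Emery argument applied to the natural Fisher information fails, and one must introduce an auxiliary positive definite matrix $P\in\R^{d\times d}$ together with the modified Fisher information
$$
 S_P(f|f_\infty) := \int_{\R^d}\psi''\!\left(\tfrac{f}{f_\infty}\right)\bigl(\nabla\tfrac{f}{f_\infty}\bigr)^T P\,\nabla\tfrac{f}{f_\infty}\;f_\infty\,dx\,.
$$
The goal is to prove that $S_P$ obeys a closed differential inequality with rate $2\mu$ along \eqref{NormalizedFPwithC}, and then to transfer this into exponential decay of $e_\psi$.

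First I would differentiate $S_P(f(t)|f_\infty)$ in time along the normalized FP flow. After integration by parts, the resulting identity splits into a Hessian (second-order) term, which is non-negative by admissibility of $\psi$, and a first-order term whose integrand is proportional to $-\psi''(\tfrac{f}{f_\infty})(\nabla\tfrac{f}{f_\infty})^T(PC+C^TP)(\nabla\tfrac{f}{f_\infty})\,f_\infty$. The key algebraic step is thus to choose $P>0$ so as to satisfy the Lyapunov-type matrix inequality
$$
 C^TP+PC\;\geq\;2\mu\,P\,.
$$
For non-defective $C$ with spectral abscissa $\mu=\mu(C)$, such a $P$ can be constructed by passing to a (complex) Jordan basis of $C$ and assigning weights to the Jordan chains that are strictly increasing along chains attached to eigenvalues with real part strictly greater than $\mu$, and constant on the chains attached to the minimizing eigenvalues $\lambda_m$ (where, by non-defectiveness, only $1\times 1$ blocks occur). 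Condition A ensures $\mu>0$ and the compatibility of $P$ with $D=C_S$ that is needed for the second-order term.

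With such $P$ fixed, the computation yields $\tfrac{d}{dt}S_P\leq -2\mu\,S_P$, hence $S_P(f(t)|f_\infty)\leq e^{-2\mu t}S_P(f_0|f_\infty)$ by Gronwall. To transfer this back to the entropy, I would use the modified logarithmic Sobolev inequality for the Gaussian $f_\infty$ and admissible $\psi$ from \cite{AMTU,BE}, which gives $e_\psi(f|f_\infty)\leq \tfrac{1}{2\alpha}\,S_P(f|f_\infty)$ whenever $P\geq\alpha I$, together with a monotonicity argument for the combined Lyapunov functional $\Phi(t):=e_\psi(f(t)|f_\infty)+\eta\,S_P(f(t)|f_\infty)$ with a small parameter $\eta>0$: one checks that $\tfrac{d}{dt}\Phi\leq -2\mu\,\Phi$, while $\Phi(0)\leq c^2\,e_\psi(f_0|f_\infty)$ and $\Phi(t)\geq e_\psi(f(t)|f_\infty)$. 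A density argument (approximating $f_0$ by smoothed data with bounded Fisher information, and then passing to the limit by lower semicontinuity) handles a possibly infinite $S_P(f_0|f_\infty)$. This produces the stated estimate with an explicit $c\geq 1$.

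The main obstacle is the construction of the Lyapunov matrix $P$ realising the sharp rate $2\mu$. Non-defectiveness of the eigenvalues of $C$ with minimal real part is precisely what allows the inequality $C^TP+PC\geq 2\mu P$ with the constant $2\mu$ on the nose, and what prevents the appearance of polynomial-in-$t$ prefactors that would spoil a pure $e^{-2\mu t}$ rate; in the defective situation one is forced either to decrease the rate strictly or to admit a time-dependent constant, as treated in \cite{AEW,M}, which is the reason for the non-defectiveness hypothesis in the present theorem.
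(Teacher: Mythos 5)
The paper does not itself prove this theorem; it is stated as a citation of \cite{AE}, on which the present work builds. Your proposal correctly reconstructs the core strategy of that reference: replace the natural Fisher information by the anisotropic functional $S_P$, choose $P>0$ satisfying the Lyapunov matrix inequality $C^TP+PC\geq 2\mu P$ (possible precisely because the eigenvalues with $\Re\lambda=\mu$ are non-defective), and carry out a Bakry--\'Emery computation to obtain $\frac{d}{dt}S_P\leq -2\mu S_P$. That part of the outline is sound.

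The transfer back to the entropy is where a genuine gap appears. Your combined functional $\Phi:=e_\psi+\eta S_P$ with the claim $\frac{d}{dt}\Phi\leq -2\mu\Phi$ cannot be right as stated: since $\frac{d}{dt}e_\psi=-I_\psi$ with the entropy dissipation $I_\psi$ involving only the (possibly degenerate) diffusion matrix $C_S$, closing this differential inequality would require $I_\psi\geq 2\mu\,e_\psi$, which is precisely the convex Sobolev inequality with coefficient $C_S$ and fails whenever $\ker C_S\neq\{0\}$. This failure is exactly what makes the problem hypocoercive rather than coercive and what forces $c>1$; no plain linear combination of $e_\psi$ and $S_P$ (without carefully chosen cross terms, as in the Villani or Dolbeault--Mouhot--Schmeiser constructions) can restore the sharp rate by a one-line Gr\"onwall argument. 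In addition, $\Phi(0)\leq c^2 e_\psi(f_0)$ would need $S_P(f_0)\lesssim e_\psi(f_0)$, which is false in general; the theorem assumes only finite entropy, not finite Fisher information, and the density/lower-semicontinuity step you propose does not repair this, because the constant obtained for the smooth approximants degenerates as their Fisher information grows. The correct route keeps the two functionals separate: the decay of $S_P$ together with the one-sided convex Sobolev inequality $e_\psi\leq k\,S_P$ gives $e_\psi(f(t))\leq k\,e^{-2\mu t}S_P(f_0)$, and the passage from $S_P(f_0)$ to $e_\psi(f_0)$ is a separate step relying on the monotonicity $\frac{d}{dt}e_\psi\leq 0$ and the parabolic smoothing of the FP semigroup for positive times, rather than on a combined Lyapunov functional.
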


Choosing the admissible quadratic function $\psi(\sigma)=(\sigma-1)^2$ yields the exponential decay of the $\mathcal{H}$-norm. For this particular choice of $\psi$, Theorem \ref{theoFP} holds also for $f_0 \in L^1(\R^d) \cap \mathcal{H}$, i.e. the positivity of the initial datum $f_0$ is not necessary.
\begin{corollary}[Hypocoercivity]
Under the assumptions of Theorem \ref{theoFP} the following estimate holds with the same $\mu>0$, \bea{$c\geq1$}: 
\begin{equation}
\label{hypo}
\|f(t)-f_{\infty}\|_{\mathcal{H}} \leq c e^{- \mu t } \|f_0-f_{\infty}\|_{\mathcal{H}}, \quad t \geq 0.
\end{equation}
\end{corollary}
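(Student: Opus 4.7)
The plan is to specialize Theorem \ref{theoFP} to the quadratic entropy generator $\psi(\sigma)=(\sigma-1)^2$, which is the canonical admissible entropy (see \cite{AMTU}, \cite{BE}). With this choice the relative entropy coincides up to a square with the $\mathcal{H}$-norm, so the corollary will be obtained by simply extracting a square root in \eqref{entropydecay}.

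Concretely, I would first record the identity
\[
e_\psi(f\,|\,f_\infty) \;=\; \int_{\R^d}\Bigl(\tfrac{f}{f_\infty}-1\Bigr)^{\!2}f_\infty\,dx \;=\; \int_{\R^d}\frac{(f-f_\infty)^2}{f_\infty}\,dx \;=\; \|f-f_\infty\|_\mathcal{H}^{2}.
\]
Consequently \eqref{entropydecay} reads $\|f(t)-f_\infty\|_\mathcal{H}^2 \le c^2 e^{-2\mu t}\|f_0-f_\infty\|_\mathcal{H}^2$, and taking square roots yields \eqref{hypo} with the very same constants $c$ and $\mu$. The requirement $e_\psi(f_0|f_\infty)<\infty$ of Theorem \ref{theoFP} is equivalent to $\|f_0-f_\infty\|_\mathcal{H}<\infty$, which holds under the standing hypothesis $f_0\in L^1_+(\R^d)\cap\mathcal{H}$.

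I do not expect any genuine obstacle: the argument is a one-line specialization of the already-established Theorem \ref{theoFP}. The only point worth flagging is the remark made immediately before the corollary in the excerpt, namely that for the quadratic $\psi$ one may drop the positivity of $f_0$. By linearity of \eqref{NormalizedFPwithC}, the perturbation $h(t):=f(t)-f_\infty$ solves the same FP equation (since $Lf_\infty=0$) with mean zero, and the entropy-dissipation computation underlying Theorem \ref{theoFP} becomes a purely quadratic Dirichlet-form estimate in $h/f_\infty$, in which the sign of $f$ is never used. This broadens the admissible class to $f_0\in L^1(\R^d)\cap\mathcal{H}$ but does not alter the two-line derivation above.
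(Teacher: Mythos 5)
Your proof is correct and follows exactly the route the paper intends: substituting the quadratic generator $\psi(\sigma)=(\sigma-1)^2$ into Theorem \ref{theoFP}, identifying $e_\psi(f\,|\,f_\infty)=\|f-f_\infty\|_{\mathcal{H}}^2$ (using the standard convention $e_\psi(f|f_\infty)=\int\psi(f/f_\infty)\,f_\infty\,dx$ of \cite{AMTU,BE}; the weight $f_\infty^{-1}$ in the paper's displayed definition appears to be a typo), and taking square roots in \eqref{entropydecay}. Your closing remark on dropping positivity via linearity of the FP operator is also consistent with the paper's own aside preceding the corollary.
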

The hypocoercivity approach in \cite{AE} provides the optimal (i.e. maximal) value for $\mu$ and a computable value for
$c$, which is however not sharp, i.e. $c>c_{\min}$ with
\begin{equation}
\label{defcmin}
c_{\min}:= \min\left\{ c \geq 1: \ \eqref{hypo} \text{ holds for all } f_0 \in \mathcal{H} \text{ with } \int_{\R^d}{ f_0\, dx}=1 \right\}.
\end{equation}
\bea{One} central goal of this paper is the determination of $c_{\min}$. \bea{But,} actually, we shall go much beyond this:
The main result of this paper, \bea{Theorem \ref{maintheo}, states }that the $\mathcal{H}$-propagator norm of each (stable) FP-equation is \textit{equal} to the \bea{(spectral)} propagator norm of its corresponding drift ODE $\dot{x}(t)=-Cx(t)$. Hence, all decay properties of the FP-equation \eqref{FP} can be obtained from a simple linear ODE, and sharp exponential decay estimates of \bea{this} ODE carry over to the corresponding FP-equation.
\bea{So, for quantifying the decay behavior of FP-equations with linear drift, an infinite dimensional PDE problem can be replaced by a (small) finite dimensional ODE problem.}

\subsection{The best multiplicative constant for \bea{the ODE-decay}}\label{bestODE}
In \cite{AAS} we analyzed the best decay constants for the (of course easier) finite dimensional problem
\begin{equation}
\label{eqODE}
\dot x(t)=-C x(t) \,, \quad t> 0 \,,\qquad x(0)=x_0\in \mathbb{C}^n \,, 
\end{equation}
where $C \in \mathbb \C^{n\times n}$ is a positive stable and non-defective matrix. 
In this case we constructed a problem adapted norm as a Lyapunov functional. This allowed to derive a hypocoercive estimate for the Euclidean norm $\| \cdot\|_2$ of the solution:
\begin{equation}
\label{hypoODE}
\|x(t)\|_2 \leq c e^{-\mu t} \|x_0\|_2, \qquad  t \geq 0 \,.
\end{equation}
Here $\mu>0$ is the spectral gap of the matrix $C$ (and the sharp decay rate of the ODE \eqref{eqODE}), and $c\geq 1$ is some constant.

In \cite{AAS} we investigated, in the two dimensional case, the sharpness of the constant $c$. 
By analogy with \eqref{defcmin}, we define the best multiplicative constant for the hypocoercivity estimate of the ODE as 
\begin{equation*}
c_1:=c_1(C):= \min\left\{ c \geq 1: \ \eqref{hypoODE} \text{ holds for all } x_0 \in \C^n\right\} \,.
\end{equation*}
The explicit expression for the best constant $c_1$ depends on the spectrum of $C$. \bea{In \cite{AAS} we treated all the cases for matrices in $\C ^{2\times 2}$}. In particular, denoting by $\lambda_1, \lambda_2$ the two eigenvalues of $C$, we distinguish three cases:
\begin{enumerate}
\item $\Re (\lambda_1)=\Re (\lambda_2)=\mu$;
\item $\mu= \Re (\lambda_1) < \Re( \lambda_2)$, $\Im (\lambda_1)=\Im (\lambda_2)$;
\item $\mu = \Re (\lambda_1) < \Re( \lambda_2)$, $\Im (\lambda_1) \neq \Im (\lambda_2)$.
\end{enumerate}
The corresponding explicit form of $c_1$ in the cases $(1)$ and $(2)$ is described in the next theorem (\bea{see Theorem 3.7 and Theorem 4.1 in \cite{AAS}}). For the case $(3)$ we have, instead, an implicit form, see \bea{Proposition 4.2 and Corollary 4.3 in \cite{AAS}}.

\begin{theorem}
\label{theoAAS}
Let $C\in \mathbb{C}^{2 \times 2} $ be positive stable and non-defective with eigenvalues $\lambda_1,\lambda_2$. Denoting 
by $\alpha \in [0,1)$ the cosine of the angle between the two eigenvectors of $C^T$, the best constant  for $\eqref{hypoODE}$ in the cases $(1)$ and $(2)$ is
$$c_1=\sqrt{\frac{1+\alpha}{1-\alpha}} \qquad\mbox{and, respectively,}\qquad c_1=\frac{1}{\sqrt{1-\alpha^2}} \,.$$
\end{theorem}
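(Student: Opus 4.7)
The strategy is to diagonalize the flow and reduce
$c_1^2 = \sup_{t\ge 0,\,x_0\ne 0} e^{2\mu t}\|x(t)\|_2^2/\|x_0\|_2^2$
to an explicit optimization. Since $C$ is non-defective, expand $x_0=\xi_1 v_1+\xi_2 v_2$ in unit right eigenvectors ($Cv_i=\lambda_i v_i$), giving $x(t)=\xi_1 e^{-\lambda_1 t}v_1+\xi_2 e^{-\lambda_2 t}v_2$. A preliminary $2\times 2$ biorthogonal-basis calculation shows $|\langle v_1,v_2\rangle|=|\langle w_1,w_2\rangle|=\alpha$ for the unit left eigenvectors $w_i$ of $C$, so $\alpha$ (the cosine of the angle between eigenvectors of $C^T$) equals $|\langle v_1,v_2\rangle|$. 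Writing $\langle v_1,v_2\rangle=\alpha e^{i\phi}$, expansion gives
\begin{equation*}
e^{2\mu t}\|x(t)\|_2^2 = |\xi_1|^2 e^{2(\mu-\Re\lambda_1)t} + |\xi_2|^2 e^{2(\mu-\Re\lambda_2)t} + 2\alpha\,\Re\!\bigl(\bar\xi_1\xi_2\,e^{i\phi}\,e^{-(\bar\lambda_1+\lambda_2-2\mu)t}\bigr),
\end{equation*}
and $\|x_0\|_2^2$ is the $t=0$ value of the same expression.

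In case (1), $\Re\lambda_1=\Re\lambda_2=\mu$ (the sub-case $\Im\lambda_1=\Im\lambda_2$ forces $C=\mu I$ and is trivial), so the two leading terms collapse to $|\xi_1|^2+|\xi_2|^2$ and the cross term is a pure oscillation whose phase $\Im(\lambda_1-\lambda_2)t+\phi+\arg(\bar\xi_1\xi_2)$ sweeps all of $\R$ as $t$ varies. Thus $\sup_t\bigl\{e^{2\mu t}\|x(t)\|_2^2\bigr\} = |\xi_1|^2+|\xi_2|^2+2\alpha|\xi_1\xi_2|$. Independently, the phase of $\bar\xi_1\xi_2$ is a free parameter in $x_0$, so the denominator can be driven down to $|\xi_1|^2+|\xi_2|^2-2\alpha|\xi_1\xi_2|$ at fixed moduli. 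Maximizing over $|\xi_2|/|\xi_1|$ (optimal at $|\xi_1|=|\xi_2|$, saturating AM--GM) yields $c_1^2=(1+\alpha)/(1-\alpha)$.

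In case (2), $\Im\lambda_1=\Im\lambda_2$ and $\Re\lambda_2-\mu=:\delta>0$ make $\bar\lambda_1+\lambda_2-2\mu=\delta$ real and positive. Setting $s=e^{-\delta t}\in(0,1]$ converts the numerator into the upward-opening quadratic $Q(s)=|\xi_1|^2+|\xi_2|^2 s^2+Rs$ with $R=2\alpha\,\Re(\bar\xi_1\xi_2 e^{i\phi})\in[-2\alpha|\xi_1\xi_2|,\,2\alpha|\xi_1\xi_2|]$. Since $Q(1)=\|x_0\|_2^2$, the ratio $Q(s)/Q(1)$ can exceed $1$ only when $R<0$, and its supremum on $(0,1]$ is attained in the limit $s\to 0^+$ (i.e.\ $t\to\infty$), with value $|\xi_1|^2/\|x_0\|_2^2$. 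Choosing $R=-2\alpha|\xi_1\xi_2|$ and minimizing the resulting denominator $|\xi_1|^2+|\xi_2|^2-2\alpha|\xi_1\xi_2|$ over $\tau:=|\xi_2|/|\xi_1|$ (minimum $1-\alpha^2$ at $\tau=\alpha$) gives $c_1^2=1/(1-\alpha^2)$.

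The main subtlety is the interplay between the freedom in $x_0$ and the sup over $t$: in case (1) the two suprema decouple because the phase of $\bar\xi_1\xi_2$ can be chosen independently of the oscillation in $t$, while in case (2) the supremum is attained only asymptotically, so one must verify that the critical initial direction is consistent with the limit $t\to\infty$. Once these decouplings are established, each case reduces to a one-dimensional extremum that is computed directly.
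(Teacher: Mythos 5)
The paper does not prove this statement; it is imported verbatim from \cite{AAS}, which (according to the surrounding text) obtained it by constructing a problem-adapted Lyapunov norm. Your argument is therefore a genuinely different route: you compute $c_1^2=\sup_{t\ge0,\,x_0\ne0}e^{2\mu t}\|x(t)\|_2^2/\|x_0\|_2^2$ directly from an eigenvector expansion, and it is correct. The two technical points on which the proof hinges are handled properly. First, the reduction $\alpha=|\langle v_1,v_2\rangle|$ is a genuinely two-dimensional fact: for unit $v_1,v_2\in\C^2$ the bilinear dual basis $u_1,u_2$ (the eigenvectors of $C^T$) satisfies $|\langle u_1,u_2\rangle|/(\|u_1\|\|u_2\|)=|\langle v_1,v_2\rangle|$, as one checks by taking $v_1=(1,0)$, $v_2=(a,b)$ and computing the dual explicitly. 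Second, the decoupling of the $t$-supremum from the $x_0$-supremum works because in case (1) the cross-term phase $\Im(\lambda_1-\lambda_2)t+\phi+\arg(\bar\xi_1\xi_2)$ genuinely sweeps $[-1,1]$ of cosine values independently of the choice of $\arg(\bar\xi_1\xi_2)$ made to minimize $\|x_0\|_2^2$ (and the degenerate sub-case $\lambda_1=\lambda_2$ gives a scalar matrix with $c_1=1$, consistent with the convention $\alpha=0$), while in case (2) the convexity of the quadratic $Q(s)$ forces the supremum on $(0,1]$ to sit at $s\to0^+$, and the remaining one-variable optimization in $\tau=|\xi_2|/|\xi_1|$ yields the minimum $1-\alpha^2$ at $\tau=\alpha$. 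Your direct calculation has the advantage of producing the sharp constant and its attainability/non-attainability transparently in one pass; the Lyapunov-norm route of \cite{AAS}, while less explicit, is the one that generalizes (at least to upper bounds) in dimension $\ge3$, where as the paper notes closed-form $c_1$ is unknown.
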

For dimension $n \geq 3$, explicit expressions for the best constant $c_1$ seem to be unknown in general.
\\
\begin{subsubsection}{The defective case}
So far we have discussed non-defective matrices \bea{$C \in \R^{d \times d}$}. The remaining case has to be treated apart since we cannot obtain both the optimality of the multiplicative constant and the sharpness of the exponential decay at the same time if $C$ is defective. Nevertheless, hypocoercive estimates \bea{do} hold (see Chapter 1.8 in \cite{P} and Theorem 2.8 in \cite{AJW}) with either reduced exponential decay rates \bea{(see Theorem $4.9$ in \cite{AE})} or with the best decay rate $\mu$, but augmented with a time-polynomial coefficient \bea{(see Theorem $2.8$ in \cite{AJW})}, as the following theorem claims.
\begin{theorem}
\label{hypodefectiveODE}
Let \bea{$C \in \C^{d \times d}$} be a positive stable (possibly defective) matrix with spectral gap $\mu >0$. Let $M$ be the maximal size of a Jordan block associated to $\mu$. Let $x(t)$ be the solution of the ODE $\frac{d}{dt}x(t)=-Cx(t)$ with initial datum $x_0 \in \C^d$. Then, for each $\epsilon >0$  there exist a constant $c_{\epsilon}\geq 1$ such that 
\begin{equation}
\label{1hypoODEdefect}
\|x(t)\|_2 \leq c_{\epsilon} e^{-(\mu-\epsilon)t} \|x_0\|_2, \qquad \forall t \geq 0, x_0 \in \C^d. 
\end{equation}
Moreover, there exists a polynomial $p(t)$ of degree $M-1$ such that 
 \begin{equation}
\label{2hypoODEdefect}
\|x(t)\|_2 \leq p(t) e^{-\mu t} \|x_0\|_2, \qquad \forall t \geq 0, x_0 \in \C^d.
\end{equation}
\end{theorem}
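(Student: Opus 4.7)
The plan is to reduce everything to the Jordan canonical form of $C$, where the matrix exponential can be written down explicitly. Write $C = PJP^{-1}$ with $J$ block-diagonal, $J = \mathrm{diag}(J_1,\ldots,J_r)$, each $J_k$ a Jordan block of size $n_k$ with eigenvalue $\lambda_k$ satisfying $\Re \lambda_k \geq \mu$. Then
\[
e^{-tC} = P\,e^{-tJ}P^{-1}, \qquad e^{-tJ_k} = e^{-\lambda_k t}\sum_{j=0}^{n_k-1}\frac{(-t)^j}{j!}N_k^j \,,
\]
where $N_k$ is the nilpotent part (ones on the superdiagonal). Estimating each block in the Euclidean norm gives $\|e^{-tJ_k}\|_2 \leq e^{-\Re\lambda_k\, t}\, q_k(t)$ with $q_k$ a polynomial of degree $n_k-1$ and nonnegative coefficients. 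Absorbing the condition number $\kappa(P) = \|P\|_2\|P^{-1}\|_2$ into the prefactor, we obtain
\[
\|x(t)\|_2 \leq \kappa(P)\max_{1\leq k\leq r} \bigl(e^{-\Re\lambda_k\, t}\, q_k(t)\bigr)\,\|x_0\|_2 \,.
\]

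For claim \eqref{2hypoODEdefect}, bound $e^{-\Re\lambda_k\, t}\leq e^{-\mu t}$ for every $k$, so the above becomes $\|x(t)\|_2 \leq e^{-\mu t}\bigl(\kappa(P)\sum_k q_k(t)\bigr)\|x_0\|_2$. The crucial point is that blocks with $\Re\lambda_k > \mu$ should not force the polynomial degree above $M-1$. For those blocks one writes $e^{-\Re\lambda_k\, t} q_k(t) = e^{-\mu t} \cdot e^{-(\Re\lambda_k-\mu)t} q_k(t)$ and uses that $e^{-\delta t} q_k(t)$ is bounded on $[0,\infty)$ for $\delta>0$, so these contributions are $\leq C\,e^{-\mu t}$ for some constant $C$. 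Only Jordan blocks with $\Re\lambda_k = \mu$ contribute an actual polynomial factor, and the largest such block has size $M$, yielding a polynomial of degree $M-1$ exactly.

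For claim \eqref{1hypoODEdefect}, given any $\epsilon>0$, use the elementary bound $p(t) \leq c_\epsilon\, e^{\epsilon t}$ for all $t\geq 0$, valid for any polynomial $p$ with $c_\epsilon := \sup_{t\geq 0} p(t) e^{-\epsilon t} < \infty$. Combining with \eqref{2hypoODEdefect} gives \eqref{1hypoODEdefect}.

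The only delicate step is the degree count for \eqref{2hypoODEdefect}: one must argue that Jordan blocks at eigenvalues strictly beyond the spectral gap contribute only a bounded (in $t$) factor when extracted against $e^{-\mu t}$, so they do not inflate the polynomial degree. This is the step where the distinction between the spectral-gap eigenvalues and the remaining spectrum genuinely enters, and it is what ties the degree of $p(t)$ precisely to the maximal Jordan block size $M$ at the spectral abscissa rather than to the overall maximal block size of $C$.
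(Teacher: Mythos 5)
Your proof is correct. Note, however, that the paper itself offers no proof of this theorem: it only cites \cite{P} (Chapter 1.8), Theorem 2.8 of \cite{AJW}, and Lemma 4.3 of \cite{AE}. Your Jordan-form argument is precisely the standard route taken in those references, so there is nothing to compare against in the paper text itself, but nothing to object to in your proof either.

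Two small expository remarks. First, in the step where you bound $e^{-\Re\lambda_k t}\leq e^{-\mu t}$ for every $k$ and write $\|x(t)\|_2\le e^{-\mu t}\bigl(\kappa(P)\sum_k q_k(t)\bigr)\|x_0\|_2$, the resulting polynomial degree is $\max_k(n_k-1)$, which may exceed $M-1$; you do immediately correct this by peeling off $e^{-(\Re\lambda_k-\mu)t}$ on blocks away from the spectral abscissa, but it would be cleaner to do the split from the start rather than state the crude bound and then repair it. Second, to be fully explicit, one can record that $\|e^{-tJ}\|_2=\max_k\|e^{-tJ_k}\|_2$ for the block-diagonal $J$, $\|N_k^j\|_2\le 1$, and that the final polynomial $p(t):=\kappa(P)\bigl(\sum_{k:\,\Re\lambda_k=\mu}q_k(t)+\sum_{k:\,\Re\lambda_k>\mu}\sup_{s\ge 0}e^{-(\Re\lambda_k-\mu)s}q_k(s)\bigr)$ has nonnegative coefficients and degree exactly $M-1$; the nonnegativity of the coefficients is then what guarantees $c_\epsilon:=\sup_{t\ge 0}p(t)e^{-\epsilon t}\geq p(0)\geq 1$ in the derivation of \eqref{1hypoODEdefect}. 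With these cosmetic adjustments, your proof is complete and self-contained.
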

As we did for the non-defective case, we define the best constant $c_{1,\epsilon}$ for the estimate \eqref{1hypoODEdefect} with rate $\mu-\epsilon$ as
\begin{equation*}
c_{1,\epsilon}:= \min \left\{c_\epsilon \geq 1: \ \eqref{1hypoODEdefect} \ \text{holds for all } x_0 \in \C^d \right\} \,.
\end{equation*}
We do not attempt to define an "optimal polynomial" $p(t)$ in \eqref{2hypoODEdefect}.
In the next section it is shown that these ODE-results carry over to the corresponding FP-equation \eqref{NormalizedFPwithC}.
\end{subsubsection}

\begin{section}{Main result \bea{for normalized FP-equations} and applications}
\bea{In Theorem \ref{maintheo1} we anticipated the main result of this paper for the \emph{non-normalized} FP-equation \eqref{FP}. In the sequel we shall deal with its equivalent formulation for \emph{normalized} FP-equations, since this will simplify the proof.}
With the above review of ODE results we can now state \bea{an essential aspect of this main result}: The best decay constants in \eqref{hypo} for the FP-equation \eqref{NormalizedFPwithC} (and therefore also for \eqref{FP}) coincide with the best constants for the ODE \eqref{eqODE}.
This result is a corollary of the main theorem of this paper, \bea{namely Theorem \ref{maintheo}}. 
It claims that the propagator norm of the FP-equation coincides with the propagator norm of its corresponding ODE (w.r.t. the Euclidean \bea{vector norm). With \emph{propagator norm} we refer to the following notion for linear ODEs or PDEs: If $A$ is their infinitesimal generator on some Banach space $X$ and $e^{At},\,t\ge0$ their \emph{propagator}, forming a $\mathcal{C}_0$-semigroup of bounded operators (cf.\ \cite{Pa83}), the propagator norm is the operator norm of $e^{At}$ on $X$, see Definition \ref{def:propNorm} below.}

First we define the projection operator $\Pi_0$ that maps a function in $\mathcal{H}$ into the subspace generated by the steady state $f_{\infty}$.
\begin{definition}
\label{proj0}
Let \bea{$f\in \mathcal{H} = L^2\left(\mathbb{R}^d,f_{\infty}^{-1}\right)$} and $f_\infty$ the normalized Gaussian \eqref{f-infty}. We define the operator $\Pi_0: \mathcal{H}\longrightarrow \mathcal{H}$ as 
$$\Pi_0f:=\langle f,f_\infty \rangle_{\mathcal{H}}f_{\infty},$$
i.e., $\Pi_0$ projects $f$ onto $V_0:=\mathrm{span}_{\R}{\{f_{\infty}\}} = \mathcal{N}(L)$.
\end{definition}
\begin{remark}
Let $f \in \mathcal{H}$. Then, the coefficient $\langle f,f_{\infty} \rangle_{\mathcal{H}}$ is equal to $\int_{\R^d} f(x) dx$, by definition. Moreover, it is obvious \bea{from the divergence form} of \eqref{NormalizedFPwithC} that the "total mass"  $\int_{\R^d}f(t,x) dx$ remains constant in time under the flow of the equation. Hence, $(\Pi_0f)(t)$ is independent of $t$, if $f(t)$ solves \eqref{NormalizedFPwithC}. This implies $e^{-Lt}(\mathbb{1} - \Pi_0)=e^{-Lt}- \Pi_0$.
\end{remark}

We introduce the standard definitions of operator norms.
\begin{definition}
\label{def:propNorm}
Let $A:\,\mathcal{H}\to\mathcal{H}$ and $B:\, \R^d\to\R^d$ be linear operators. Then 
$$
    \|A \|_{\mathcal{B}(\mathcal{H})} := \sup_{0 \neq f \in \mathcal{H}} \frac{\|Af\|_{\mathcal{H}}}{\|f\|_{\mathcal{H}}} \,,\qquad
        \|B\|_{\mathcal{B}(\R^d)} := \sup_{0 \neq x \in \R^d} \frac{\|Bx\|_2}{\|x\|_2} \,.
$$
\end{definition}
\bigskip

If $f(t)$ is the solution of the FP-equation \eqref{NormalizedFPwithC}  with $f(0)=f_0 \in \mathcal{H}$, then
$$
  \left\|e^{-Lt} \left(  \mathbb{1} - \Pi_0 \right) \right\|_{\mathcal{B}(\mathcal{H})} = \left\|e^{-Lt} \right\|_{\mathcal{B}(V_0^\bot)}
   = \sup_{0 \neq f_0 \in \mathcal{H}}{\frac{\|f(t)-\Pi_0f_0\|_{\mathcal{H}}}{\|f_0\|_{\mathcal{H}}}} \,.
$$
If $x(t) \in \R^d$ is the solution of the ODE $\frac{d}{dt}x=-Cx\,$ with initial datum $x(0):=x_0$, then
\begin{equation*}
   \left\|e^{-Ct}\right\|_{\mathcal{B}(\R^d)} = \sup_{0 \neq x_0 \in \R^d}{\frac{\|x(t)\|_2}{\|x_0\|_2}}.
\end{equation*}
With these notations we can state the main result of this paper.
\begin{theorem}
\label{maintheo}
Let Condition $A$ hold for the FP-equation \eqref{NormalizedFPwithC}.
Then the propagator norms of the FP-equation \eqref{NormalizedFPwithC} and its corresponding ODE $\frac{d}{dt}x=-Cx$ are equal,
i.e.,
\begin{equation}
\label{mainequality}
    \left\|e^{-Lt}\right\|_{\mathcal{B}(V_0^{\perp})} = \left\|e^{-Ct}\right\|_{\mathcal{B}(\R^d)} \,, \qquad\forall t \geq 0 \,.
\end{equation}
\end{theorem}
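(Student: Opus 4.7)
The plan is to decompose $\mathcal{H}$ as an orthogonal direct sum of finite-dimensional, $L$-invariant subspaces $V_n$ on which $e^{-Lt}$ is isometrically equivalent to a symmetric tensor power of the ODE propagator $e^{-Ct}$, and then to read off \eqref{mainequality}. Since $f_\infty$ is the standard Gaussian, the multivariate Hermite polynomials form an orthonormal basis of $\mathcal{H}$; grouping them by total degree $n$ gives
\[
  \mathcal{H} = \bigoplus_{n\geq 0} V_n, \qquad V_0 = \mathrm{span}\{f_\infty\} = \mathcal{N}(L).
\]
A direct computation from $Lf = -\mathrm{div}(f_\infty C \nabla(f/f_\infty))$ shows that $L$ preserves the degree filtration: if $f = p\, f_\infty$ with $\deg p \leq n$, then $Lf = \bigl(x \cdot C\nabla p - \nabla \cdot C\nabla p\bigr) f_\infty$, which is again a polynomial of degree $\leq n$ times $f_\infty$. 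Together with the analogous property for $L^*$, this forces each $V_n$ to be $L$-invariant, so
\[
  \|e^{-Lt}\|_{\mathcal{B}(V_0^\perp)} = \sup_{n\geq 1} \|e^{-Lt}|_{V_n}\|.
\]

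Next, I would identify $V_n$ isometrically with the symmetric tensor space $\mathrm{Sym}^n(\mathbb{R}^d)$, equipped with the natural inner product satisfying $\langle v^{\odot n}, w^{\odot n}\rangle = n!\,\langle v,w\rangle^n$, via a Hermite-to-tensor correspondence; the two spaces have the same dimension $\binom{n+d-1}{d-1}$, and the Hermite orthogonality relations in $\mathcal{H}$ match the tensor inner product up to a combinatorial factor. The real content is the intertwining relation: under this identification, $L|_{V_n}$ should correspond to the derivation $C^{(n)}$ on $\mathrm{Sym}^n(\mathbb{R}^d)$ defined on decomposable symmetric tensors by
\[
  C^{(n)}(v_1 \odot \cdots \odot v_n) = \sum_{i=1}^n v_1 \odot \cdots \odot C v_i \odot \cdots \odot v_n,
\]
so that the semigroup $e^{-Lt}|_{V_n}$ becomes the symmetric tensor power $(e^{-Ct})^{\odot n}$. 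I expect the main technical obstacle to lie here: verifying that this correspondence is an isometry with the correct combinatorial normalization, and that the intertwining with the derivation $C^{(n)}$ genuinely holds. This is precisely what Sections 4 and 5 set up, and what gets reformulated as second quantization in Section 7.

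Given the identification, the norm of $(e^{-Ct})^{\odot n}$ on $\mathrm{Sym}^n(\mathbb{R}^d)$ equals $\|e^{-Ct}\|_{\mathcal{B}(\mathbb{R}^d)}^n$: the cross-norm inequality gives the upper bound, and equality is attained by testing on $v^{\odot n}$ with $v \in \mathbb{R}^d$ a unit vector at which $\|e^{-Ct}\|$ is realized. It remains to observe that $\sup_{n\geq 1}\|e^{-Ct}\|^n$ is attained at $n=1$, which is where Condition A enters: since $C_S \geq 0$, one has
\[
  \frac{d}{dt}\|e^{-Ct}x\|_2^2 = -2\,\langle e^{-Ct}x,\, C_S\, e^{-Ct}x\rangle \leq 0, \qquad x \in \mathbb{R}^d,
\]
so $\|e^{-Ct}\|_{\mathcal{B}(\mathbb{R}^d)} \leq 1$ for every $t \geq 0$, and the sequence $\|e^{-Ct}\|^n$ is non-increasing in $n$. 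This yields \eqref{mainequality}.
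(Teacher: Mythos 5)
Your outline follows essentially the same route as the paper: decompose $\mathcal{H}$ into the degree-$m$ Hermite subspaces $V^{(m)}$, identify each with a symmetric tensor power of $\R^d$ so that $e^{-Lt}|_{V^{(m)}}$ becomes $(e^{-Ct})^{\otimes m}$ restricted to symmetric tensors, compute that subspace propagator norm as $\|e^{-Ct}\|^m$, and finish using $\|e^{-Ct}\|\le 1$ (a consequence of $C_S\ge 0$). The one step you flag as an "obstacle" — the intertwining of $L|_{V^{(m)}}$ with the derivation $C^{(m)}$ under the correctly normalized Hermite-to-tensor isometry — is exactly what the paper's Sections 4–5 carry out concretely via the multi-index evolution of the Fourier coefficients and the rank-1 decomposition of symmetric tensors (Theorem \ref{evolutionDmt}, leading to $D^{(m)}(t)=e^{-Ct}\odot^m D^{(m)}(0)$), so your plan is a correct and compatible reformulation in second-quantization language, matching what the paper makes explicit in Section 7.
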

The proof of Theorem \ref{maintheo} will be prepared in the following two sections and finally completed in Section \ref{sec:fifth}.

Theorem \ref{maintheo} can be seen as a generalization of a result in \cite{GM}, where the propagator norm for the following kinetic FP-equation \bea{(the $L^2$-adjoint equation of (2) in \cite{GM})}
\begin{eqnarray}
\partial_t g &=& -\widetilde L_a g := -v\, \partial_x g + \partial_v(\partial_v g +  (ax+v)g) \nonumber\\
  &=& \mathrm{div}_{(x,v)} \left( \begin{pmatrix} 0 & 0 \\ 0 & 1\end{pmatrix} \nabla_{(x,v)} g 
  + \begin{pmatrix} 0 & -1 \\ a & 1\end{pmatrix}\begin{pmatrix}x\\ v\end{pmatrix}g\right)\,,\label{kinFP}
\end{eqnarray}
with $(x,v)\in \R^2$ and the parameter $a>0$, has been computed explicitly.

\begin{theorem}
\label{theoGM} \cite[Theorem 1.2]{GM}
For any $a>0$ and $t \geq 0$, it holds: 
\begin{equation}
\label{NormPta}
\left\|e^{-\widetilde{L}_a t}\right\|_{\mathcal{B}(V_0^{\perp})}=c_{a}(t) \exp{\left (    - \frac{1-\sqrt{(1-4a)_+}}{2} t      \right )},
\end{equation}
where the non-negative factor $c_a(t)$ is given for $0<a<1/4$ by
\begin{equation}
\label{GMcase1}
c_a(t):=\sqrt{ e^{-2\theta t} + \frac{1-\theta^2}{2 \theta ^2}  ( 1- e^{-\theta t} )^2 + \frac{1-e^{-2\theta t}}{2} \left (   1+\frac{1}{\theta}  \sqrt{1+(\theta^{-2}-1) \left (  \frac{e^{\theta t}-1}{e^{\theta t}+1} \right )^2}    \right )} \,,
\end{equation}
with $\theta = \sqrt{1-4a}$, for $a>1/4$ by
\begin{equation}\label{GMcase2}
c_a(t):= \sqrt{1+\frac{|e^{\theta t }-1|}{2|\theta|^2}  \left ( |e^{\theta t }-1| + \sqrt{|e^{\theta t }-1|^2 +4|\theta|^2}   \right )  } \,,
\end{equation}
with $\theta:= \sqrt{ 4 a -1}i$, and for $a=1/4$ by
\begin{equation}
\label{eq:Cadefective}
c_a(t):= \sqrt{1+ \frac{t^2}{2}+ t \sqrt{1+\left(\frac{t}{2} \right )^2}} \,\,.
\end{equation}
\end{theorem}
Note that there is a small typo in the formula for $c_a(t)$, $a<1/4$ in \cite{GM} that corresponds to \eqref{GMcase1}.

\bea{After normalization of the FP-equation \eqref{kinFP}, the corresponding drift matrix is given by}
\begin{equation}
\label{Ca}
C_{a}:= \left ( \begin{matrix}
0 & -\sqrt{a} \\ \sqrt{a} & 1
\end{matrix}
\right ).
\end{equation}
Its eigenvalues are $\lambda_{1,2}:=\frac{1}{2} \left(1 \pm \theta \right)$,
with $\theta$ as in Theorem \ref{theoGM}, and the corresponding eigenvectors are $v_{1,2} = (\sqrt{a}, -\lambda_{1,2})^T$. 
This shows that the spectral gap is given by 
$\mu = \frac{1}{2}\left(1-\sqrt{(1-4a)_+}\right)$. It is easy to check that $C_{a}$ satisfies Condition $A$ for each $a>0$. We observe that the value $a=1/4$ is critical in the sense that $C_{1/4}$ is defective. 

With the approach of this work we can employ the results of Section \ref{bestODE} for obtaining the best possible constant
$c_1$ in
$$
   \left\|e^{-\widetilde{L}_a t}\right\|_{\mathcal{B}(V_0^{\perp})} = \left\|e^{-C_a t}\right\|_{\mathcal{B}(\R^d)} \le c_1 e^{-\mu t} \,.
$$
For $a\ne 1/4$ we apply Theorem \ref{theoAAS} and note that for $0<a<1/4$ we are in case (2). We compute $\alpha = 2\sqrt{a}$, giving the optimal constant
$$
   c_1 = (1-4a)^{-1/2} \,,
$$
which can also be obtained from \eqref{GMcase1} in the limit $t\to\infty$. For $a>1/4$ we are in case (1) and obtain
$\alpha = (2\sqrt{a})^{-1}$ and 
$$
  c_1 = \frac{2\sqrt{a} + 1}{\sqrt{4a-1}} \,.
$$
The same is obtained as the maximal value of $c_a(t)$ in \eqref{GMcase2}, taken whenever $\left|e^{\theta t}-1\right|=2$.

Finally, for $a=1/4$ the results of Theorems \ref{hypodefectiveODE} and \ref{theoGM} agree with $c_a(t)\approx t$ as
$t\to\infty$, \bea{since the best approximation for the function in \eqref{eq:Cadefective}, i.e.\ the smallest affine linear upper bound to \eqref{eq:Cadefective}, is the polynomial $p(t)=1+t$.}

The plot in Figure \ref{plot1} shows the right-hand side of \eqref{NormPta} as a function of time for 3 values of $a$ ($a=1/5$, $a=1/4$, $a=2$). Note the non-smooth behavior in the case $a=2$.
\begin{figure}[H]
\includegraphics[scale=0.6]{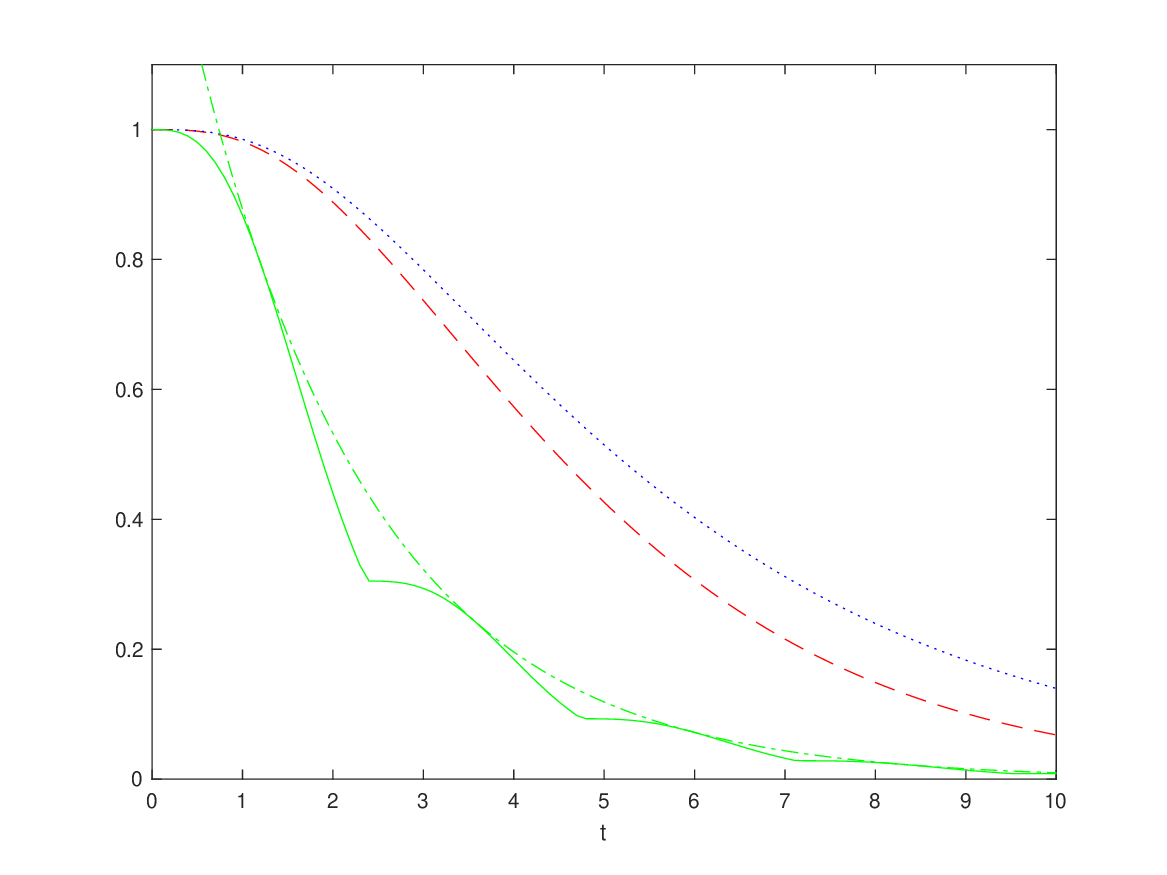}
\caption{The propagator norm for equation \eqref{kinFP} for 3 values of the parameter $a$. Solid (green) curve for $a=2$, dashed (red) curve for $a=1/4$, dotted (blue) curve for $a=1/5$. The dash-dotted (green) curve, gives the best exponential bound of the form $c_1 e^{-t/2}$ for the case $a=2$. \bea{Note: The curves are colored only in the electronic version of this article.} }
\label{plot1}
\end{figure}

\begin{subsection}{Applications of Theorem \ref{maintheo}}
\subsubsection{Long time behavior}
One consequence of Theorem $\ref{maintheo}$ is that all the estimates about the decay of the solutions of the ODE carry over to the corresponding FP-equation. In particular, it follows that the hypocoercive ODE estimates \eqref{hypoODE} and \eqref{1hypoODEdefect} hold also for solutions of the corresponding FP-equation. Moreover, the best constants in the estimates are the same both for the FP-case and for its corresponding drift ODE. 
\begin{theorem}
\label{ilteorema}
Let $C \in \R^{d \times d}$ be \textit{non-defective} and satisfy Condition $A$. Let $c_1$ be the best constant in the estimate \eqref{hypoODE} for the ODE \eqref{eqODE}. Then it is also the optimal constant $c_{\min}$ in the following hypocoercive estimate 
\begin{equation}
\label{firstestimate}
\|f(t)-f_{\infty}\|_\mathcal{H} \leq c_1 e^{-\mu t} \|f_0-f_{\infty}\|_{\mathcal{H}}, \quad \forall t\geq 0, \forall f_0 \in \mathcal{H}, \int_{\R^d}f_0(x)\, dx=1
\end{equation}
for the \bea{solution of the} FP-equation \eqref{NormalizedFPwithC}.
\end{theorem}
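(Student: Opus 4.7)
The plan is to derive Theorem \ref{ilteorema} as a short corollary of the main Theorem \ref{maintheo}, so the proof reduces essentially to two bookkeeping identifications followed by one invocation of the main result.

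First, I would rewrite the difference $f(t)-f_\infty$ in a form suitable for applying the propagator norm. Since $f_\infty$ is a steady state, $e^{-Lt}f_\infty=f_\infty$, hence
$$
   f(t)-f_\infty \;=\; e^{-Lt}(f_0-f_\infty)\,.
$$
Next I would verify that the map $f_0\mapsto f_0-f_\infty$ is a bijection between the affine slice $\{f_0\in\mathcal{H}:\int_{\R^d}f_0\,dx=1\}$ and $V_0^{\perp}$. Indeed $\langle f,f_\infty\rangle_{\mathcal{H}}=\int f\,dx$, so $\int f_0\,dx=1=\int f_\infty\,dx$ gives $f_0-f_\infty\in V_0^{\perp}$; conversely any $g\in V_0^{\perp}$ is $g=(g+f_\infty)-f_\infty$ with $g+f_\infty$ normalized. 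Consequently the optimal constant \eqref{defcmin} admits the clean representation
$$
   c_{\min}\;=\;\sup_{t\ge 0}\; e^{\mu t}\,\bigl\|e^{-Lt}\bigr\|_{\mathcal{B}(V_0^{\perp})}\,.
$$

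Second, I would unwrap the ODE definition of $c_1$ in exactly the same way,
$$
   c_1\;=\;\sup_{t\ge 0}\; e^{\mu t}\,\bigl\|e^{-Ct}\bigr\|_{\mathcal{B}(\R^d)}\,,
$$
and then apply Theorem \ref{maintheo}, which states the \emph{pointwise in $t$} equality $\|e^{-Lt}\|_{\mathcal{B}(V_0^{\perp})}=\|e^{-Ct}\|_{\mathcal{B}(\R^d)}$ for all $t\ge 0$. Taking suprema on both sides forces $c_{\min}=c_1$. Inserting this equality back into \eqref{hypoODE} (which holds by definition of $c_1$) and using the propagator representation of $f(t)-f_\infty$ gives \eqref{firstestimate} with constant $c_1$, and any strictly smaller constant in \eqref{firstestimate} would, via the same chain of equalities, contradict the definition of $c_1$, establishing optimality.

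There is no genuine obstacle: the entire analytic content is absorbed into Theorem \ref{maintheo}, and what remains is just the identification of the admissible set of perturbations $\{f_0-f_\infty\}$ with $V_0^{\perp}$ and the commutation of the supremum over $t$ with the equality of propagator norms. The only minor check is the admissibility $c_1\ge 1$, which is automatic by evaluating $\|e^{-Ct}\|_{\mathcal{B}(\R^d)}$ at $t=0$.
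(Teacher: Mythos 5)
Your proof is correct and follows exactly the route the paper intends: Theorem \ref{ilteorema} is presented there, without a separate proof, as an immediate application of Theorem \ref{maintheo}, and your identification of the admissible perturbations $\{f_0-f_\infty:\int f_0=1\}$ with $V_0^\perp$ together with $f(t)-f_\infty=e^{-Lt}(f_0-f_\infty)$ is precisely the bookkeeping needed to convert the pointwise-in-$t$ equality of propagator norms into the equality $c_{\min}=c_1$.
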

\begin{theorem}
\label{ilteoremaDEFECTIVE}
Let $C\in \R^{d \times d}$ be defective and satisfy Condition $A$. Let $M$ be the maximal size of a Jordan block associated to $\mu$. Let $\epsilon>0$ be fixed 
and $c_{1, \epsilon}$ be the best constant in the estimate \eqref{1hypoODEdefect} for the ODE \eqref{eqODE}. Then the following hypocoercive estimate holds
\begin{equation}
\label{firstestimateDEF}
\|f(t)-f_{\infty}\|_\mathcal{H} \leq c_{1,\epsilon}e^{-(\mu - \epsilon)t} \|f_0-f_{\infty}\|_{\mathcal{H}}, \quad \forall t\geq 0, \forall f_0 \in \mathcal{H}, \int_{\R^d}f_0(x)\, dx=1
\end{equation}
for the \bea{solution of the} FP-equation \eqref{NormalizedFPwithC}, and $c_{1, \epsilon}$ \bea{is the optimal multiplicative constant}.
Moreover, 
\begin{equation}
\label{secondtestimateDEF}
\|f(t)-f_{\infty}\|_\mathcal{H} \leq p(t) e^{-\mu t} \|f_0-f_{\infty}\|_{\mathcal{H}}, \quad \forall t\geq 0, \forall f_0 \in \mathcal{H}, \int_{\R^d}f_0(x)\, dx=1,
\end{equation}
where $p(t)$ is the polynomial of degree $M-1$ appearing in \eqref{2hypoODEdefect}.
\end{theorem}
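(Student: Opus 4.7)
The plan is to derive both estimates as direct corollaries of Theorem \ref{maintheo}, in exactly the same way Theorem \ref{ilteorema} handles the non-defective case; the only novelty is that we now feed in the two ODE bounds of Theorem \ref{hypodefectiveODE} instead of \eqref{hypoODE}.

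First I would reduce both assertions to the propagator norm on $V_0^\perp$. Given $f_0 \in \mathcal{H}$ with $\int f_0\,dx = 1$, the identity $\langle f_0,f_\infty\rangle_\mathcal{H}=\int f_0\,dx = 1$ gives $\Pi_0 f_0 = f_\infty$, so $f_0 - f_\infty \in V_0^\perp$. Since $f_\infty \in \mathcal{N}(L)$ and the semigroup is linear, $f(t) - f_\infty = e^{-Lt}(f_0 - f_\infty)$, whence
\begin{equation*}
\|f(t) - f_\infty\|_\mathcal{H} \le \|e^{-Lt}\|_{\mathcal{B}(V_0^\perp)}\,\|f_0 - f_\infty\|_\mathcal{H} = \|e^{-Ct}\|_{\mathcal{B}(\R^d)}\,\|f_0 - f_\infty\|_\mathcal{H},
\end{equation*}
where the last equality uses Theorem \ref{maintheo}. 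Inserting the two ODE bounds of Theorem \ref{hypodefectiveODE}, namely $\|e^{-Ct}\|_{\mathcal{B}(\R^d)} \le c_{1,\epsilon}\,e^{-(\mu-\epsilon)t}$ and $\|e^{-Ct}\|_{\mathcal{B}(\R^d)} \le p(t)\,e^{-\mu t}$, yields \eqref{firstestimateDEF} and \eqref{secondtestimateDEF} respectively.

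For the optimality of $c_{1,\epsilon}$ in \eqref{firstestimateDEF} I would exploit that the affine map $f_0 \mapsto f_0 - f_\infty$ is a bijection from $\{f_0 \in \mathcal{H} : \int f_0\,dx = 1\}$ onto $V_0^\perp = \{h \in \mathcal{H} : \int h\,dx = 0\}$ (because $\langle h,f_\infty\rangle_\mathcal{H} = \int h\,dx$). Hence the validity of \eqref{firstestimateDEF} with some constant $\tilde c$ for every admissible $f_0$ is equivalent to $\|e^{-Lt}\|_{\mathcal{B}(V_0^\perp)} \le \tilde c\,e^{-(\mu-\epsilon)t}$ for every $t \ge 0$, which by Theorem \ref{maintheo} is in turn equivalent to $\|e^{-Ct}\|_{\mathcal{B}(\R^d)} \le \tilde c\,e^{-(\mu-\epsilon)t}$ for every $t \ge 0$. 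The minimal such $\tilde c$ on the ODE side is, by definition, $c_{1,\epsilon}$, so the minimal $\tilde c$ on the FP side is also $c_{1,\epsilon}$.

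The main obstacle is essentially nonexistent: once Theorem \ref{maintheo} is available, the defective case is no harder than the non-defective one, since the Jordan-block structure of $C$ enters only through the ODE propagator norm and is already accounted for in Theorem \ref{hypodefectiveODE}. I would only take care to note that no analogous optimality is claimed for $p(t)$ in \eqref{secondtestimateDEF}, consistently with the fact that no "best polynomial" was singled out on the ODE level.
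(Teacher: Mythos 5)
Your proof is correct and follows exactly the approach the paper intends: Theorem \ref{ilteoremaDEFECTIVE} (like Theorem \ref{ilteorema}) is presented in the paper as a direct corollary of Theorem \ref{maintheo} combined with the ODE bounds of Theorem \ref{hypodefectiveODE}, and you have filled in precisely that argument. The reduction via $f(t)-f_\infty = e^{-Lt}(f_0-f_\infty)$, the bijection $f_0\mapsto f_0-f_\infty$ onto $V_0^\perp$ for the optimality of $c_{1,\epsilon}$, and the observation that no optimality is claimed for $p(t)$ all match the paper's reasoning.
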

We \bea{remind} that the quest to obtain the best decay for \eqref{FP} is \bea{thus} reduced to the knowledge of the best decay constants for the corresponding drift ODE.
\subsubsection{Short time behavior}
The second application of Theorem \ref{maintheo} concerns the short time behavior of the propagator norm of the FP-operator. It is linked to the concept of \textit{hypocoercivity index}, which describes the "structural complexity" of the matrix $C$ and, more precisely, the intertwining of its symmetric and anti-symmetric parts. For the FP-equation, the hypocoercivity index reflects its degeneracy structure. As we are going to illustrate in this section, this index represents the polynomial degree in the short time behavior of the propagator norm, both in the FP-equation and in the ODE case. Moreover it describes the rate of regularization of the FP-solution from $\mathcal{H}$ to a weighted Sobolev space $H^1$.

\bea{Next we recall the definition of \textit{hypocoercivity index} both for 
FP-equa\-tions and ODEs, respectively, from \cite{AE} and \cite{AAC18, AAC}}. We will see that these two concepts coincide when we consider the drift ODE associated to the FP-equation.
We first give the definition for the normalized FP-equation and then it will be illustrated that the index is invariant for the general ($D \neq C_S$) equation \eqref{FP}.
\begin{definition}
\label{defHCI1}
We define $m_{HC}$, the \textit{hypocoercivity index} for the normalized FP-equation \eqref{NormalizedFPwithC} as the minimum $m \in \mathbb{N}_0$ such that
\begin{equation}
T_m:= \sum_{j=0}^m C_{AS}^j C_S (C_{AS}^T)^j >0 \,.
\end{equation}
Here $C_{AS}:=\frac{1}{2}(C-C^T)$ denotes the anti-symmetric part of $C$.
\end{definition}
\begin{remark}
Lemma 2.3 in \cite{AE} states that the condition $m_{HC}< \infty$ is equivalent to the FP-equation being hypoelliptic. This index can be seen as a measure of "how much'' the drift matrix has to mix the directions of the kernel of the diffusion matrix with its orthogonal space in order to guarantee convergence to the steady state.
For example, $m_{HC}=0$ means, by definition, that the diffusion matrix $D=C_S$ is positive definite, and hence coercive. In general, $m_{HC}$ is finite when we are assuming Condition $A$ (see Lemma $2.3$, \cite{AE}).
\end{remark}
For completeness, we \bea{include} the definition of hypocoercivity index also for the non-normalized case. For simplicity we will denote it as well with $m_{HC}$. This is actually allowed since the next proposition will prove that these two definitions are unchanged under normalization.
\begin{definition}
We define $m_{HC}$ the \textit{hypocoercivity index} for the FP-equation \eqref{FP} as the minimum $m  \in \mathbb{N}_0$ such that 
\begin{equation}
\widetilde{T}_{m}:=\sum_{j=0}^m \widetilde{C}^j \widetilde{D} (\widetilde{C}^T)^j >0,
\end{equation}
\bea{and $m_{HC}=\infty$ if this minimum does not exist.}
\end{definition}
\begin{prop}
Let us consider the FP-equation \eqref{FP} and its normalized version \eqref{NormalizedFPwithC}. Let Condition $\widetilde{A}$ (or, equivalently, Condition $A$) be satisfied. Then, the hypocoercivity indices of the two equations coincide, i.e., for any $m \in \mathbb{N}_0$
\begin{equation}
\label{Tmetilde}
T_m>0 \text{ \quad if and only if \quad } \widetilde{T}_m >0.
\end{equation}
\end{prop}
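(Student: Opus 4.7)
The plan is to interpolate between $\widetilde{T}_m$ and $T_m$ through an intermediate matrix $\widehat{T}_m := \sum_{j=0}^m C^j C_S (C^T)^j$, reducing the claim to two successive equivalences: $\widetilde{T}_m > 0 \Leftrightarrow \widehat{T}_m > 0$ and $\widehat{T}_m > 0 \Leftrightarrow T_m > 0$.

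For the first step I would use the normalization transformation as a congruence. Since $C = K^{-1/2}\widetilde{C}K^{1/2}$ implies $C^j = K^{-1/2}\widetilde{C}^jK^{1/2}$ (and similarly $(C^T)^j = K^{1/2}(\widetilde{C}^T)^jK^{-1/2}$), and since the normalization property \eqref{norm-prop} gives $C_S = K^{-1/2}\widetilde{D}K^{-1/2}$, a telescoping calculation yields
\[
K^{-1/2}\widetilde{T}_m K^{-1/2} = \sum_{j=0}^m C^j C_S (C^T)^j = \widehat{T}_m.
\]
Because $K^{-1/2}$ is symmetric and invertible, this congruence preserves positive definiteness, so $\widetilde{T}_m > 0 \Leftrightarrow \widehat{T}_m > 0$.

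The second step is the substantive one. Both matrices are positive semi-definite, so it will suffice to show $\ker(\widehat{T}_m) = \ker(T_m)$. Writing the quadratic forms
\[
v^T\widehat{T}_m v = \sum_{j=0}^m \|C_S^{1/2}(C^T)^j v\|_2^2, \qquad v^T T_m v = \sum_{j=0}^m \|C_S^{1/2}(C_{AS}^T)^j v\|_2^2,
\]
one sees that these kernels are $\{v:(C^T)^j v \in \ker(C_S),\ 0 \le j \le m\}$ and $\{v:(C_{AS}^T)^j v \in \ker(C_S),\ 0 \le j \le m\}$, respectively. The pivotal observation is that on $\ker(C_S)$ the identity $C^T = C_S - C_{AS}$ degenerates to $C^T = -C_{AS} = C_{AS}^T$. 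I would then run a short induction on $j$: if $(C_{AS}^T)^k v \in \ker(C_S)$ for all $k \le j$, then $(C^T)^k v = (C_{AS}^T)^k v$ for all $k \le j+1$. Specializing at $j=m$ yields $\ker(T_m) \subseteq \ker(\widehat{T}_m)$, and the symmetric induction, with the roles of $C^T$ and $C_{AS}^T$ interchanged, gives the reverse inclusion.

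The main obstacle I expect is the bookkeeping of this induction: each application of the identity $C^T = C_{AS}^T$ on $\ker(C_S)$ is licit only after the previous iterate has been certified to lie in $\ker(C_S)$, so the two chains of "agreement" and "membership in $\ker(C_S)$" must be advanced in lockstep. Once this is done, the kernel equality follows and, concatenated with the first step, closes the proof of \eqref{Tmetilde}.
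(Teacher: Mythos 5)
Your proposal is correct and follows the same overall route as the paper: interpolate through the intermediate matrix $\widehat{T}_m := \sum_{j=0}^m C^j C_S (C^T)^j$, and handle the two resulting equivalences separately. Your congruence step, $K^{-1/2}\widetilde{T}_m K^{-1/2} = \widehat{T}_m$ via $C^j = K^{-1/2}\widetilde{C}^j K^{1/2}$ and $C_S = K^{-1/2}\widetilde{D}K^{-1/2}$, is exactly the paper's second step. The genuine point of departure is the other equivalence, $\widehat{T}_m > 0 \Leftrightarrow T_m > 0$: the paper dispatches this by citing Lemma~3.4 of \cite{AAC}, whereas you give a self-contained proof. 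Your kernel-equality argument is sound: both matrices are positive semi-definite (each summand is of the form $M^T C_S M$ with $C_S \geq 0$), so positivity reduces to triviality of the kernel; the quadratic forms identify $\ker\widehat{T}_m = \{v : (C^T)^j v \in \ker C_S,\ 0\le j\le m\}$ and $\ker T_m = \{v : (C_{AS}^T)^j v \in \ker C_S,\ 0\le j\le m\}$; and since $C^T = C_S - C_{AS}$ implies $C^T w = C_{AS}^T w$ for $w\in\ker C_S$, the lockstep induction you describe (advancing the membership-in-$\ker C_S$ and agreement-of-iterates claims together) correctly establishes $\ker\widehat{T}_m = \ker T_m$. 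What this buys is a proof that does not lean on an external reference; the trade-off is a bit of induction bookkeeping that the paper avoids by citation.
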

\begin{proof}
First we \bea{recall from Lemma 2.3, \cite{AAC} that
\begin{equation}
\label{step1MHC}
\sum_{j=0}^m C_{AS}^j C_S (C_{AS}^T)^j >0 \text{\quad if and only if \quad} \sum_{j=0}^m C^j C_S (C^T)^j >0.
\end{equation}}

The second step consists in proving that $\widetilde{T}_m >0$ \  iff 
$$
  \bea{\hat{T}}_m:=\sum_{j=0}^m C^j D (C^T)^j >0,
$$
where $C=K^{-1/2}\widetilde{C}K^{1/2}$ and $D=K^{-1/2}\widetilde{D} K^{-1/2}=C_S$ are the matrices appearing in the normalized equation and $K$ from \eqref{Lyapeq}. By substituting we get
\begin{align*}
\bea{\hat{T}}_m=&\sum_{j=0}^m (K^{-1/2}\widetilde{C}K^{1/2})^j K^{-1/2}\widetilde{D} K^{-1/2} (K^{1/2}\widetilde{C}^TK^{-1/2})^j
\\
=&K^{-1/2} \sum_{j=0}^m \widetilde{C}^j \widetilde{D} (\widetilde{C}^T)^j K^{-1/2}
\\
=&K^{-1/2}\widetilde{T}_m K^{-1/2}.
\end{align*}
Then, it is immediate to conclude that the positivity of the two matrices is equivalent since $K>0$.

Combining this last equivalence with \eqref{step1MHC} yields \eqref{Tmetilde}.
\end{proof}

\begin{remark}
We shall now compare the hypocoercivity index $m_{HC}$ of the normalized FP-equation \eqref{NormalizedFPwithC} to the commutator condition $(3.5)$ in \cite{Vi09}. To this end we rewrite \eqref{NormalizedFPwithC} for $h(x,t):=f(x,t)/f_{\infty}(x)$. In H\"ormander form it reads 
\bea{
\begin{equation}\label{hoermander-form}
\partial_t h=\mathrm{div}(C\nabla h)-x^TC\nabla h= -(A^*A+B)h, 
\end{equation}
}
where the adjoint $A^*$ is taken w.r.t. $L^2(\R^d, f_{\infty})$. Here, the vector valued operator $A$ and the scalar operator $B$ are given by 
$$
A:=\sqrt{D} \cdot \nabla, \qquad B:=x^T \cdot C_{AS} \cdot \nabla.
$$
Following \S 3.3 in \cite{Vi09} we define the iterated commutators 
$$
C_0:=A, \qquad C_k:=[C_{k-1},B].
$$
They are vector valued operators mapping from \bea{$L^2(\R^d,f_{\infty})$ to $(L^2(\R^d,f_{\infty}))^d$}. Hence, the nabla operator in $B$ can be either the gradient or the Jacobian, depending on the dimensionality of the argument of $B$.
\bea{By induction one} easily verifies that $C_k=\sqrt{D}\cdot C_{AS}^k \cdot \nabla$, $k\in \N_0$.

We recall condition $(3.5)$ from \cite{Vi09}: ``There exists $N_c \in \N_0$ such that
\begin{equation}
\label{CondVillani}
\sum_{k=0}^{N_c} C_k^*C_k \text{\quad is coercive on } \mathrm{ker}(A^*A+B)^{\perp}.\: \mbox{''}
\end{equation}
Note that  $\mathrm{ker}(A^*A+B)$ consists of the constant functions, and its orthogonal \bea{complement} is $\{ h \in L^2(\R^d,f_{\infty}) \ : \ \int_{\R^d} h f_{\infty} dx=0 \}.$ The coercivity in \eqref{CondVillani} reads 
\begin{equation}
\label{WPIneq}
\int_{\R^d} \nabla^T h \cdot T_{N_c} \cdot \nabla h  f_{\infty} dx \geq \kappa \int_{\R^d} h^2 f_{\infty} dx
\end{equation}
for some $\kappa>0$ and all $h \in  \mathrm{ker}(A^*A+B)^{\perp}$, where $T_{N_c}:= \sum_{k=0}^{N_c} (C^T_{AS})^k D C_{AS}^k$. Clearly, the weighted Poincar\'e inequality \eqref{WPIneq} holds iff $T_{N_c}>0$, see \S 3.2 in \cite{AMTU}, e.g. Hence, the minimum $N_c$ for condition \eqref{CondVillani} to hold equals the hypocoercivity index $m_{HC}$ from Definition \ref{defHCI1} above.
\end{remark}

Next we shall link the hypocoercivity index of the FP-equation with the hypocoercivity index $m_{HC}$ of its associated ODE $\dot{x}(t)=-Cx(t)$, which is defined in the same way.
At the ODE level, this index describes the short time decay of the propagator norm $\left\|e^{-Ct}\right\|_{\mathcal{B}(\R^d)}$ as it is shown in the following Theorem \ref{shortTimeODE} (see Theorem 2.6, \cite{AAC}).

\bea{
\begin{remark}
We note that our hypocoercivity index $m_{HC}$ also coincides with the index appearing in the characterization of the \emph{singular space} $S$ of the FP-operator, i.e.\ the smallest integer $k_0$ such that
$$
  \bigcap_{j=0}^{k_0} \mathrm{ker}[ C_S(C_{AS})^j] = S = \{0\}
$$ 
(see (2.9) in \cite{AB}, (3.22) in \cite{OPP}). The equivalence of these two indices follows since they are both equivalent to the smallest integer $\tau$ in the \emph{Kalman rank condition}, i.e.\ 
$$
  \mathrm{rank} \big\{\sqrt{C_S},\, C_{AS}\sqrt{C_S},\, ...,C_{AS}^\tau\sqrt{C_S}\big\}=d\ .
$$
This was established in Proposition 1 of \cite{AAC18} and, respectively, on pages 705/706 of \cite{OPP}. The latter proof uses the version \eqref{hoermander-form} of the FP-equation.
\end{remark}
}

\begin{theorem}
\label{shortTimeODE}
Let $C$ satisfy Condition $A$. Then its  hypocoercivity index is $m_{HC} \in \N_0$ \bea{(and hence finite)} if and only if 
\begin{equation}\label{EQshortTimeODE}
   \left\|e^{-Ct}\right\|_{\mathcal{B}(\R^d)} = 1 - ct^{\alpha}+\mathcal{O}(t^{\alpha+1}), \qquad\mbox{as } t \rightarrow 0+ \,,
\end{equation}
for some $c>0$, where $\alpha:=2 m_{HC}+1$.
\end{theorem}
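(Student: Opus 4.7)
My proof plan would have one substantive direction and one easy direction, since the leading-order exponent in the expansion \eqref{EQshortTimeODE} is uniquely determined by $t \mapsto \|e^{-Ct}\|_{\mathcal{B}(\R^d)}$. I would establish the forward implication (hypocoercivity index $= m$ forces $\alpha = 2m+1$), from which the converse follows immediately: if the expansion holds with exponent $\alpha$, then applying the forward direction to the (finite, by Condition $A$) hypocoercivity index $m'$ of $C$ yields $\alpha = 2m'+1$, hence $m' = (\alpha-1)/2$ is forced.

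The starting point is the identity $\|e^{-Ct}\|^2 = \lambda_{\max}(M(t))$ with $M(t) := e^{-C^T t}e^{-Ct}$ a real-analytic symmetric matrix family with $M(0) = I$ and $M(t) \leq I$ for $t \geq 0$, the last inequality following from the workhorse dissipation identity
\begin{equation*}
\tfrac{d}{ds}\|e^{-Cs}v\|^2 = -2\|\sqrt{C_S}\, e^{-Cs}v\|^2 \leq 0.
\end{equation*}
The organizing structure is the decreasing filtration $V_{-1} := \R^d \supset V_0 \supset V_1 \supset \cdots$, where $V_k := \ker T_k = \{v : (C_{AS}^T)^j v \in \ker C_S \text{ for } j=0,\ldots,k\}$, and $m_{HC}=m$ amounts to $V_{m-1} \supsetneq V_m = \{0\}$. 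For $v \in V_{k-1}\setminus V_k$, an induction using $Cw = C_{AS}w$ whenever $w \in \ker C_S$ yields $C^j v = C_{AS}^j v$ for $j \leq k$, so the Taylor expansion of $u(s) := e^{-Cs}v$ gives
\begin{equation*}
\sqrt{C_S}\,u(s) = \frac{(-s)^k}{k!}\sqrt{C_S}\,C_{AS}^k v + O(s^{k+1}),
\end{equation*}
with the leading coefficient nonzero; substituting into $\|u(t)\|^2 = \|v\|^2 - 2\int_0^t \|\sqrt{C_S}\,u(s)\|^2\,ds$ and integrating gives
\begin{equation*}
\|u(t)\|^2 = \|v\|^2 - \frac{2\,t^{2k+1}}{(2k+1)(k!)^2}\bigl\|\sqrt{C_S}\,C_{AS}^k v\bigr\|^2 + O(t^{2k+2}).
\end{equation*}
Choosing $k = m$ and any unit $v \in V_{m-1}$, compactness of the unit sphere of the finite-dimensional space $V_{m-1}$ (on which $v\mapsto\|\sqrt{C_S}\,C_{AS}^m v\|^2$ is strictly positive since $V_m=\{0\}$) converts this into the lower bound $\|e^{-Ct}\|^2 \geq 1 - c_1 t^{2m+1} + O(t^{2m+2})$ with $c_1>0$.

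For the matching upper bound I would invoke Rellich's theorem, producing real-analytic eigenvalues $\lambda_1(t),\ldots,\lambda_d(t)$ of $M(t)$ with $\lambda_i(0) = 1$, each of the form $\lambda_i(t) - 1 = -c_i t^{\alpha_i} + O(t^{\alpha_i+1})$ with $c_i > 0$ (using $M(t) < I$ for $t > 0$: iterating the dissipation identity shows that any $v$ with $\|e^{-Cs}v\| = \|v\|$ on an interval lies in $\bigcap_k V_k=\{0\}$ by Condition $A$). Organizing the eigenvectors at $t=0$ to respect the strata $V_{k-1}\setminus V_k$ and running the Taylor computation on each branch shows that the only possible exponents are $\alpha_i \in \{1,3,\ldots,2m+1\}$, with $2m+1$ realized precisely on the branches whose zeroth-order eigenvector sits in $V_{m-1}$. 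Taking the maximum over $i$ then yields $\lambda_{\max}(M(t)) = 1 - c\,t^{2m+1} + O(t^{2m+2})$, and the square root produces \eqref{EQshortTimeODE} with $\alpha = 2m+1$.

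The step I expect to require the most care is pairing the analytic eigenvalue branches of $M(t)$ with their corresponding filtration levels $V_k$. The clean way is a Kato-type reduction: the first-order perturbation $-2C_S$ peels off the branches complementary to $V_0$ (exponent $1$); the effective second-order perturbation restricted to $V_0$ peels off branches in $V_0 \setminus V_1$ (exponent $3$); iterating down to $V_{m-1}$ produces the branches of exponent $2m+1$, with all intermediate exponents odd because of the alternating role of $C_S$ and $C_{AS}$ in the recursion. Once these pairings are in place, the forward direction is complete and, as noted above, the converse is automatic.
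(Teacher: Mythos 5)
Note first that the paper does not prove Theorem~\ref{shortTimeODE}; it quotes it from \cite{AAC} (Theorem~3.2), so there is no internal proof to compare against. Evaluating your sketch on its own merits: the lower-bound half is correct and clean. The induction $C^j v = C_{AS}^j v$ for $j \le m$ when $v \in V_{m-1}$, fed into the dissipation identity, indeed gives $\|e^{-Ct}v\|^2 = 1 - \tfrac{2}{(2m+1)(m!)^2}\|\sqrt{C_S}\,C_{AS}^m v\|^2\,t^{2m+1} + O(t^{2m+2})$ for any fixed unit $v \in V_{m-1}$, and the coefficient is positive since $V_m = \{0\}$; a single such $v$ already yields $\|e^{-Ct}\|^2 \ge 1 - c_1 t^{2m+1} + O(t^{2m+2})$, so compactness of the unit sphere of $V_{m-1}$ is not actually needed for this direction.

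The gap is in the upper bound, where the real work lies. You assert that Kato's degenerate reduction of $M(t) = e^{-C^Tt}e^{-Ct}$ sorts the analytic eigenvalue branches by the strata $V_{k-1}\setminus V_k$, with exponents $1, 3, \dots, 2m+1$. This is itself a substantial claim, not a formality: it requires showing that every even-order effective operator in the reduction vanishes on the residual degenerate subspace, and that the $(2k+1)$-th effective operator on $V_{k-1}$ has kernel exactly $V_k$; already at third order the reduced-resolvent corrections enter and nothing in the sketch checks that these combine to something with the right kernel. Worse, a branch you would need to rule out carries a $t$-dependent eigenvector $v_i(t)$, and $\lambda_i(t) = \|e^{-Ct}v_i(t)\|^2$; the fixed-vector Taylor computation you set up does not apply to it, so you cannot currently exclude a branch with $\lambda_i(t) = 1 + O(t^{2m+2})$. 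A way to close this without the perturbation theory is to prove the uniform bound $\|e^{-Ct}v\|^2 \le 1 - \delta t^{2m+1}$ for small $t$ and \emph{all} unit $v$. By the equivalence in \eqref{step1MHC} (applied to $C^T$), $T_m>0$ gives $\sum_{j=0}^{m}(C^T)^j C_S C^j > 0$, hence $\sum_{j=0}^{m}\|\sqrt{C_S}\,C^j v\|^2 \ge \lambda_0 > 0$ uniformly over unit $v$; comparing $\sqrt{C_S}\,e^{-Cs}v$ with its degree-$m$ Taylor polynomial and using that $q \mapsto \int_0^1\|q(r)\|^2\,dr$ is positive definite on vector polynomials of degree $\le m$ then yields $1-\|e^{-Ct}v\|^2 = 2\int_0^t\|\sqrt{C_S}\,e^{-Cs}v\|^2\,ds \ge \delta t^{2m+1}$ for small $t$. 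With this in hand, any analytic branch of exponent $>2m+1$ would have an analytic unit eigenvector $v_i(t)$ with $\|e^{-Ct}v_i(t)\|^2 = 1 + O(t^{2m+2})$, a contradiction; together with your lower bound this pins the exponent at $2m+1$, and the converse is, as you say, automatic.
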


\begin{remark}
We observe that, in the coercive case (i.e., $m_{HC}=0$), the propagator norm satisfies an estimate of the form
\begin{equation} 
\label{COERC}
\left\|e^{-Ct}\right\|_{\mathcal{B}(\R^d)}\leq e^{-\lambda t }, \quad t \geq 0, \text{ for some } \lambda>0.
\end{equation}
In that case ($\alpha=1$) Theorem \ref{shortTimeODE} states that the propagator norm \linebreak 
$\left\|e^{-Ct}\right\|_{\mathcal{B}(\R^d)}$ behaves as $g(t):=1-ct$ for short times. With $c=\lambda$, this is the (initial part of the) Taylor expansion of the exponential function in \eqref{COERC}.
\end{remark}
Next we shall use this result to derive information about the short time behavior of the Fokker-Planck propagator norm $\| e^{-Lt}\|_{\mathcal{B}(V_0^{\perp})}$. By Theorem \ref{maintheo} the propagator norms of the FP-equation and the corresponding ODE coincide. 

\begin{theorem}
\label{TheoShortFP}
Let $L$ be the Fokker-Planck operator defined in \eqref{NormalizedFPwithC}. Let $C$ satisfy Condition $A$. Then \bea{the hypocoercivity index} of \eqref{NormalizedFPwithC} is $m_{HC}\in \mathbb{N}_0$ \bea{(and hence finite)} if and only if 
\begin{equation}
\label{eqshortFP}
\left\|e^{-Lt}\right\|_{\mathcal{B}(V_0^{\perp})} =1-ct^{\alpha}+\mathcal{O}(t^{\alpha+1}), \quad t \rightarrow 0+,
\end{equation}
where $\alpha=2 m_{HC}+1$, for some $c >0.$
\end{theorem}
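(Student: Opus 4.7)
The plan is to reduce Theorem \ref{TheoShortFP} directly to its finite-dimensional analogue Theorem \ref{shortTimeODE}, using the propagator-norm identity from Theorem \ref{maintheo} as the bridge. Conceptually no new analysis is required: the statement is really a corollary once Theorem \ref{maintheo} is in place.

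First I would invoke Theorem \ref{maintheo}: under Condition $A$,
$$
\|e^{-Lt}\|_{\mathcal{B}(V_0^{\perp})} = \|e^{-Ct}\|_{\mathcal{B}(\R^d)}, \qquad\forall\, t \ge 0.
$$
Since this is a pointwise identity of scalar functions of $t$, the two sides have exactly the same asymptotic expansion as $t\to 0+$. In particular, for any fixed exponent $\alpha$ and any $c>0$, the FP propagator norm admits an expansion of the form $1-ct^\alpha+\mathcal{O}(t^{\alpha+1})$ if and only if the ODE propagator norm does, with the same $\alpha$ and the same $c$.

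Next I would line up the two notions of hypocoercivity index. On the FP side, Definition \ref{defHCI1} characterizes $m_{HC}$ as the smallest $m\in\N_0$ for which $T_m=\sum_{j=0}^m C_{AS}^j C_S (C_{AS}^T)^j$ is positive definite, where $C$ is the drift matrix of \eqref{NormalizedFPwithC}. As pointed out in the paragraph preceding Theorem \ref{shortTimeODE}, the hypocoercivity index of the ODE $\dot x=-Cx$ is defined by the very same expression in the very same matrix $C$. So the hypothesis "the hypocoercivity index of $L$ equals $m_{HC}$" is tautologically equivalent to "the matrix $C$ has ODE hypocoercivity index $m_{HC}$".

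Finally I would apply Theorem \ref{shortTimeODE}, which is an iff-statement and thus delivers both directions of Theorem \ref{TheoShortFP} simultaneously: $C$ has hypocoercivity index $m_{HC}\in\N_0$ iff
$$
\|e^{-Ct}\|_{\mathcal{B}(\R^d)} = 1 - c\,t^{2m_{HC}+1} + \mathcal{O}(t^{2m_{HC}+2}) \qquad\text{as } t\to 0+,
$$
for some $c>0$. Substituting the propagator-norm identity from the first step yields \eqref{eqshortFP}. There is no genuine obstacle at this stage; the hard work lies upstream, in the proof of Theorem \ref{maintheo} (via the spectral/tensorization machinery developed in Sections~4--6) and in Theorem \ref{shortTimeODE} itself, which is imported from \cite{AAC}. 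Once both are granted, Theorem \ref{TheoShortFP} is essentially a one-line consequence.
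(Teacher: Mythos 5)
Your proposal follows exactly the same route as the paper's proof: invoke the propagator-norm identity of Theorem \ref{maintheo}, observe that the FP hypocoercivity index and the ODE hypocoercivity index are defined by the identical matrix expression and therefore coincide, and then read off the conclusion from the iff-statement of Theorem \ref{shortTimeODE}. The argument is correct and complete.
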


\begin{proof}
This result is an immediate corollary of Theorem \ref{maintheo} and Theorem \ref{shortTimeODE}, by recalling that the FP-equation and its associated ODE have the same hypocoercivity index.
\end{proof}
\begin{remark}
As for the ODE case, the equality \eqref{eqshortFP} shows that the index $m_{HC}$ describes how fast the propagator norm decays for short times. This is consistent with the fact that the coercive case ($m_{HC}=0$) corresponds to the fastest behavior, i.e., with an exponential decay ($\alpha=1$). In general, the bigger the index, the slower is the decay of the norm for short times.
\end{remark}

\begin{example}
In Theorem $1.2$ of \cite{GM} the authors derive the \bea{explicit expression} for the propagator norm of the FP-equation associated to the matrix \eqref{Ca}, see Theorem \ref{theoGM}. \bea{With it they also estimate} the short time behavior of this norm, depending on the parameter $a$. In the case $a>0$, equality $(2)$ in \cite{GM} implies 
$$
\left\|e^{-\widetilde L_a t}\right\|_{\mathcal{B}(V_0^{\perp})}=1-\frac{a}{6}t^3+o(t^3).
$$
We note that this result is consistent with the equality \eqref{eqshortFP}. Indeed, it is easy to verify that for $a>0$ the matrix $C_a$ has hypocoercivity index $m_{HC}=1$. Hence the exponent in the polynomial short time behavior turns out to be $\alpha=3$, as above. $ \hfill \qed$
\end{example}

\bea{It is known that} the hypocoercivity index \bea{also has} a second implication on the qualitative behavior of FP-equations, namely the rate of regularization from some weighted $L^2$-space into a weighted $H^1$-space (like in non-degenerate parabolic equations). The following proposition was proven in \cite{Vi09} (see \S 7.3, \S A.21 for the kinetic FP-equation with $m_{HC}=1$. The extension from Theorem A.12 is given without proof and includes a small typo.) and in \cite[Theorem 4.8]{AE}. \bea{The following result can also be seen as a special case of (2.21) as well as of Theorem 2.6 in \cite{AB}.}
\begin{prop}
\label{RegWitha}
Let $f(t)$ be the solution of \eqref{NormalizedFPwithC}. Let $C$ satisfy Condition $A$ and $m_{HC}$ be its associated hypocoercivity index. Then, there exist $\tilde{c}$, $\delta >0$, such that
\begin{equation}
\label{RegH1}
   \left\| f_\infty\nabla\left(\frac{f(t)}{f_{\infty}}\right )\right\|_{\mathcal{H}} \leq \tilde{c} t^{-\alpha/2}
   \left\|f_0\right\|_{\mathcal{H}}, \qquad 0<t\leq \delta,
\end{equation}
with $\alpha:=2 m_{HC}+1$ for all $f_0 \in \mathcal{H}$.
\end{prop}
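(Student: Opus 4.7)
The plan is to adapt the modified Lyapunov functional strategy of \cite{Vi09} and \cite{AE}. Passing to the relative density $h := f/f_\infty$, which satisfies $\partial_t h = -(A^*A + B)h$ with $A=\sqrt{D}\,\nabla$, $B = x^T C_{AS}\nabla$ and adjoints in $L^2(f_\infty)$, and noting that $\|f_\infty\nabla h\|_{\mathcal{H}}^2 = \int_{\R^d}|\nabla h|^2 f_\infty\,dx$, the task becomes a short-time $L^2(f_\infty)\to H^1(f_\infty)$ regularization estimate with rate $t^{-\alpha/2}$.

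I would introduce the time-weighted functional
\[
\Phi(t) := \|h\|^2_{L^2(f_\infty)} + \sum_{k=0}^{m_{HC}} a_k\, t^{2k+1}\int_{\R^d}\bigl|\sqrt{D}\,(C_{AS}^T)^k \nabla h\bigr|^2 f_\infty\, dx + \textup{(cross terms)},
\]
with positive weights $a_k$ and bilinear cross terms carrying intermediate powers of $t$. Differentiating along the flow produces at each level $k$ two competing contributions: a positive $(2k+1)a_k t^{2k}$-piece from the prefactor, and a genuinely dissipative piece from $A^*A$ which, via the commutator identity $[\nabla,B]=-C^T\nabla$, iteratively generates a term of level $k+1$. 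Choosing $a_0\gg a_1\gg\cdots\gg a_{m_{HC}}$ in a cascade (with thresholds depending on $\delta$) lets the positive piece at level $k+1$ be absorbed by the dissipation at level $k$. At the top level $m_{HC}$ there is no further level available, but the coercivity $T_{m_{HC}}\ge\kappa\,\mathbb{1}$ (equivalent to $m_{HC}<\infty$ under Condition~A by Lemma~2.3 of \cite{AE}) converts the leftover term into a coercive bound on the full gradient. For $\delta$ small enough this yields $\Phi'(t)\le 0$ on $[0,\delta]$, hence $\Phi(t)\le\Phi(0)=\|f_0\|_{\mathcal{H}}^2$.

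Bounding each summand in $\Phi(t)\le \|f_0\|_{\mathcal{H}}^2$ by $a_k^{-1}\|f_0\|_{\mathcal{H}}^2$, multiplying the $k$-th inequality by $t^{2(m_{HC}-k)}\le\delta^{2(m_{HC}-k)}$ and summing yields
\[
   t^{\alpha}\int_{\R^d} \nabla^T h\, T_{m_{HC}}\,\nabla h\, f_\infty\,dx \le C(\delta)\,\|f_0\|_{\mathcal{H}}^2,
\]
and the positive definiteness of $T_{m_{HC}}$ converts this into the desired bound on $\|f_\infty\nabla h\|_{\mathcal{H}}^2$. The hard part will be the algebraic bookkeeping: selecting the cross terms and coefficients so that the dissipation at level $k$ strictly dominates the unwanted contributions generated at level $k+1$ uniformly on $[0,\delta]$, while keeping $\Phi$ nonnegative. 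A potentially cleaner alternative, not pursued here, is to exploit the spectral decomposition into the invariant subspaces $V_n$ developed in Sections~4--5: since the FP evolution on each $V_n$ is a symmetric tensor power of $e^{-Ct}$ and $\nabla$ maps $V_n$ into $V_{n-1}$, the short-time regularization should reduce to corresponding estimates on the drift ODE and assemble over the spectral blocks.
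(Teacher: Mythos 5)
The paper does not prove Proposition~\ref{RegWitha} itself; it cites \cite{Vi09} (\S A.21) and \cite[Theorem~4.8]{AE}. Your main plan---a time-weighted hypocoercivity functional with a cascade of coefficients $a_0\gg a_1\gg\cdots\gg a_{m_{HC}}$ and cross terms, using the iterated-commutator structure to pass dissipation from level $k$ to $k+1$ and closing the top level with the coercivity of $T_{m_{HC}}$---is precisely the strategy of those references, so you have correctly reconstructed the intended route, modulo the bookkeeping you defer. One small but concrete error: the commutator is $[\nabla,B]=C_{AS}\nabla$ (and $[\nabla,A^*A]=C_S\nabla$, so $[\nabla,A^*A+B]=C\nabla$), not $-C^T\nabla$; the iterated commutators are $C_k=\sqrt{D}\,C_{AS}^k\nabla$, which is what drives the tower of weights $t^{2k+1}$. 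More interestingly, the ``cleaner alternative'' you mention but decline to pursue is exactly what the paper's own machinery delivers: in the Appendix proof of Proposition~\ref{Prop319}(a), one uses the Hermite/tensor decomposition to write
$\bigl\|\nabla(f(t)/f_\infty)\bigr\|_{L^2(f_\infty)}^2=\sum_m m\,\|\tilde d^{(m)}(t)\|_2^2$,
bounds each block by Theorem~\ref{TEOnormm}, inserts the short-time expansion $\tilde h(t)=1-ct^\alpha+o(t^\alpha)$ from Theorem~\ref{TheoShortFP}, and concludes with the elementary inequality $m(1-c_1t^\alpha)^{2m}\leq (ec_1)^{-1}t^{-\alpha}$. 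Thus Theorem~\ref{TheoShortFP} together with Proposition~\ref{Prop319}(a) gives an independent proof of \eqref{RegH1} by the spectral route. The Lyapunov route is more robust (it needs no explicit diagonalization and carries over to nonquadratic confinements), while the spectral route is shorter and sharper here, tying the regularization exponent directly to the hypocoercivity index via the propagator-norm identity \eqref{mainequality}.
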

So far we have seen that the hypocoercivity index of a FP-equation determines both the short time decay and its regularization rate. An obvious question is now to understand the relation of these two qualitative properties. The following proposition shows that they are essentially equivalent for the family \eqref{NormalizedFPwithC} of FP-equations:
\begin{prop}
\label{Prop319}Let \bea{the matrix} $C$ satisfy Condition $A$ \bea{(see Definition \ref{conA})},  
and let $f(t)$ be the solution of \eqref{NormalizedFPwithC}. We denote its propagator norm by $\left\|e^{-Lt}\right\|_{\mathcal{B}(V_0^{\perp})}=:\tilde{h}(t)$, $t \geq 0$. 
 \begin{enumerate}[(a)]
 \item Assume that $\tilde{h}(t)=1-ct^{\alpha}+o(t^{\alpha})$ as $t \rightarrow 0^+$ for some $c>0$ and $\alpha >0$. Then the regularization estimate \eqref{RegH1} follows with the same $\alpha$, and for all $f_0\in \mathcal{H}$.
 Moreover, this $\alpha$ in \eqref{RegH1} is optimal (i.e.\ minimal).
 \item Let there exist some $\tilde{c}, \delta >0$ and $\alpha > 0$ (not necessarily integer) such that 
 \eqref{RegH1} holds \bea{for all} $f_0 \in \mathcal{H}$. Then, \bea{there are $\delta_2>0$ and $c_2>0$, such that} $\tilde{h}(t) \leq 1- c_2 t^{\alpha}$ on $0 \leq t \leq \delta_2$.
 Moreover, if $\alpha$ is minimal in the assumed regularization estimate \eqref{RegH1}, then it is also minimal in the concluded decay estimate $\tilde{h}(t) \leq 1- c_2 t^{\alpha}$.
 \end{enumerate}
\end{prop}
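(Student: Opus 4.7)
The plan is to exploit two structural inputs from Sections 4--5. Set $\rho(t):=\|e^{-Ct}\|_{\mathcal{B}(\R^d)}$; under Condition $A$ one has $C_S\geq 0$ and so $\rho(t)\leq 1$ for all $t\geq 0$. Consider the orthogonal Hermite decomposition $\mathcal{H}=\bigoplus_{k\geq 0} V_k$, with $V_0=\mathrm{span}\{f_\infty\}$ and $V_k$ spanned by $f_\infty$ times multivariate Hermite polynomials of total degree $k$. Using creation/annihilation operators $\hat a_i=\partial_{x_i}$ and $\hat a_i^{*}=x_i-\partial_{x_i}$, the normalized operator takes the normal-ordered form $\tilde L = \sum_{i,j}C_{ij}\hat a_i^{*}\hat a_j$, which preserves every $V_k$ and acts there as the $k$-fold symmetric tensor lift of $C^{T}$. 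Two consequences are central: first,
$$\|e^{-Lt}|_{V_k}\|_{\mathcal{B}(\mathcal{H})} \;=\; \rho(t)^k, \qquad k\geq 0,$$
and second, the weighted $H^1$-identity
$$\bigl\|f_\infty\nabla(f/f_\infty)\bigr\|_{\mathcal{H}}^{2} \;=\; \sum_{k\geq 1} k\,\|P_k f\|_{\mathcal{H}}^{2},$$
where $P_k$ is the orthogonal projection onto $V_k$; this follows directly from $\partial_{x_i} H_\alpha = \alpha_i H_{\alpha - e_i}$ together with the orthogonality of Hermite polynomials in $L^2(f_\infty)$.

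For part (a), combining the two displays above with $f(t)=e^{-Lt}f_0$ and the orthogonality of the $V_k$'s gives
$$\bigl\|f_\infty\nabla(f(t)/f_\infty)\bigr\|_{\mathcal{H}}^{2} \;=\; \sum_{k\geq 1} k\,\rho(t)^{2k}\|P_k f_0\|_{\mathcal{H}}^{2} \;\leq\; \Bigl(\sup_{k\geq 1} k\,\rho(t)^{2k}\Bigr)\|f_0\|_{\mathcal{H}}^{2}.$$
Treating $k$ as continuous, $k\mapsto k\,\rho(t)^{2k}$ peaks at $k^{*}(t) = -1/(2\log\rho(t))$ with value $k^{*}/e$. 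The hypothesis $\rho(t)=1-ct^\alpha + o(t^\alpha)$ gives $-\log\rho(t)=ct^\alpha+o(t^\alpha)$, so $k^{*}/e \sim (2ect^\alpha)^{-1}$, and rounding to integer $k$ costs only constants. This delivers \eqref{RegH1} with the same exponent $\alpha$; minimality of $\alpha$ follows by applying part (b) to any hypothetical smaller exponent.

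For part (b), specializing the regularization hypothesis to $f_0\in V_k$ forces $k\,\rho(t)^{2k}\leq \tilde c^{2} t^{-\alpha}$, i.e.\ $\rho(t)^k \leq \tilde c\, t^{-\alpha/2}/\sqrt{k}$ for every $k\geq 1$ and $0<t\leq \delta$. Choosing $k=k(t):=\lceil 4\tilde c^{2}\, t^{-\alpha}\rceil$ makes the right-hand side $\leq 1/2$, whence $\rho(t)\leq 2^{-1/k(t)}\leq 1-c_2 t^\alpha$ on some smaller interval $(0,\delta_2]$, with $c_2>0$. Theorem \ref{maintheo} identifies $\tilde h=\rho$, delivering the claim. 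Minimality of $\alpha$ in the decay estimate transfers by running the optimization of part (a) in reverse on any candidate improved decay estimate.

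The anticipated main obstacle is making the norm formula on $V_k$ fully rigorous, i.e.\ verifying $\|e^{-Lt}|_{V_k}\|_{\mathcal{H}} = \rho(t)^k$ with the $\mathcal{H}$-inner product restricted to $V_k$ (rather than the naive tensor inner product on $\mathrm{Sym}^k(\R^d)$, which differs by combinatorial $\sqrt{\alpha!}$ factors). Once this structural input is available from the tensor formalism of Sections 4--5, both parts (a) and (b) reduce to elementary one-variable optimization of $k\,\rho(t)^{2k}$ over $k\in\N$.
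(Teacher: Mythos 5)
Your proposal is correct and follows essentially the same route as the paper's Appendix proof. Both arguments rest on the same three structural inputs: (i) the identification $\tilde h(t)=\|e^{-Ct}\|_{\mathcal{B}(\R^d)}=:\rho(t)\le 1$ from Theorem \ref{maintheo} together with \eqref{h1}; (ii) the fact that the $\mathcal{H}$-propagator norm restricted to $V^{(m)}$ equals $\rho(t)^m$ (the paper's \eqref{supDm} combined with \eqref{normofDmlink} and \eqref{linkProjVect} -- you correctly flag the $\sqrt{\alpha!}$ normalization issue, which is exactly what \eqref{normofDmlink} resolves: the Frobenius-to-$\ell^2$ conversion factor $\sqrt{m!}$ is $t$-independent); and (iii) the weighted $H^1$ identity $\|f_\infty\nabla(f/f_\infty)\|_{\mathcal{H}}^2 = \sum_{m\ge 1} m\,\|\Pi_m f\|_{\mathcal{H}}^2$, which the paper derives via $H_n'=nH_{n-1}$ in \eqref{3stars}. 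For part (a), your optimization of $k\mapsto k\rho(t)^{2k}$ is the same calculation the paper packages as the elementary inequality $m(1-c_1t^\alpha)^{2m}\le \frac{1}{ec_1}t^{-\alpha}$. For part (b), your version is actually a bit cleaner: by choosing $k(t)=\lceil 4\tilde c^2 t^{-\alpha}\rceil$ so that $\rho(t)^{k(t)}\le 1/2$, you short-circuit the paper's slightly more elaborate analysis of the function $g(\mu;t)=\exp(-\tfrac{1}{2}\log(\bar c\mu t^\alpha)/\mu)$ and its minimizer; the constants differ but the conclusion $\rho(t)\le 1-c_2 t^\alpha$ is obtained the same way. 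Your handling of the two minimality claims by cross-referencing (a) and (b) also matches the paper. The only cosmetic caveat is that you describe $L|_{V_k}$ as the symmetric tensor lift of $C^T$, whereas the \emph{coefficient} evolution used for the norm is governed by $-C$ as in \eqref{d1ev}; since $\|e^{-Ct}\|=\|e^{-C^Tt}\|$ this has no effect on the argument.
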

The proof of Proposition \ref{Prop319} can be found in the Appendix, since it requires results that will be presented in the next sections. \\

\bea{\begin{remark}
We note that the statements \eqref{eqshortFP} and \eqref{RegH1} are different in nature: While the equality \eqref{eqshortFP} \emph{characterizes} the short-time decay of $e^{-Lt}$, the inequality \eqref{RegH1} only provides an \emph{upper bound} for the short time regularization of $e^{-Lt}$. Hence, since Proposition \ref{RegWitha} is based on \eqref{RegH1}, it can only yield the conclusion $\tilde{h}(t) \leq 1-c_2 t^{\alpha}$, which is also just an upper bound for the short time behavior, rather than the dominant part of the Taylor expansion of $\tilde{h}(t)$. But if $\alpha$ is known to be minimal in \eqref{RegH1}, then it is also minimal for \eqref{eqshortFP}.
\end{remark}}
\begin{remark}
Proposition \ref{RegWitha} provides an \emph{isotropic} regularization rate. We note that this result can be improved for degenerate, hypocoercive FP-equations, \bea{and it gives rise} to anisotropic smoothing: There the regularization is faster in the diffusive directions of $(\ker C_S)^\perp$ than in the non-diffusive directions of $\ker C_S$. ``Faster'' corresponds here to a smaller exponent in \eqref{RegH1}.

An example of different speeds of regularization is given in \cite[Section 11]{Sc} for the solution $f(t, x,v)$ of a kinetic FP-equation in $\mathbb{T}^d \times \R^d$ without confinement potential. In that case the short-time regularization estimate for the $v$-derivatives is the same as for the heat equation, since the operator is elliptic in $v$. But the regularization in $x$ has an exponent 3 times as large; this corresponds, respectively, to the two cases $m_{HC}=0,\,1$ in \eqref{RegH1}. 
A more general result about anisotropic regularity estimates can be found in \cite[Section A.21.2]{Vi09}. 
In an alternative description one can fix a uniform regularization rate in time, by considering different regularization orders (i.e.\ higher order derivatives) in different spatial directions in the setting of \emph{anisotropic} Sobolev spaces. A definition of these functional spaces and an example of this behaviour is provided in \cite{OPP}, regarding the solution of a degenerate Ornstein-Uhlenbeck equation.
\end{remark}
\end{subsection}
\end{section}
\section{Solution of the FP-equation by spectral decomposition}
In order to link the evolution in (\ref{NormalizedFPwithC}) to the corresponding drift ODE $\dot x=-Cx$ we shall project the solution $f(t)\in \mathcal{H}$ of \bea{\eqref{NormalizedFPwithC}} to finite dimensional subspaces  $\{V^{(m)} \}_{m \in \N_0} \subset \mathcal{H}$ with $L V^{(m)} \subseteq V^{(m)}$. Then we shall show that, surprisingly, the evolution in each subspace can be based on the single ODE $\dot x=-C x$. 
\bea{Moreover, the solution component in the subspace $V^{(1)}$ will turn out to decay the slowest, and it is hence the dominant part.}
\subsection{Spectral decomposition of the Fokker Planck operator}
\label{sec:4.1}
First we define the finite dimensional, $L$-invariant subspaces $V^{(m)} \subset \mathcal{H}$.
Let the dimension $d \geq 1$ be fixed. From \cref{intro} we recall that the (normalized) steady state of \eqref{NormalizedFPwithC} is given by $g_0(x):=f_{\infty}\bea{(x)}= \prod_{i=1}^d g(x_i)$, $x=(x_1,\ldots,x_d) \in \R^d$, where $g(y)=\frac{1}{\sqrt{ 2 \pi}}e^{-y^2/2}$ is the one-dimensional (normalized) Gaussian.
The construction and results about the spectral decomposition of $L$ that we are going to summarize can be found in \cite[Section 5]{AE}.

\begin{definition}
Let $\alpha=(\alpha_i) \in \N_0^d$ be a multi-index. Its order is denoted by $|\alpha|=\sum_{i=1}^d \alpha_i$.
For a fixed $\alpha \in \mathbb{N}_0^d$ we define
\begin{equation}
\label{galpha}
g_{\alpha}(x):= (-1)^{|\alpha|}\nabla ^{\alpha}_x g_0(x),
\end{equation}
or, equivalently, 
\begin{equation}
\label{hermiteingalpha}
g_{\alpha}(x):=\prod_{i=1}^d  H_{\alpha_i}(x_i) g(x_i), \quad  \forall x=(x_i) \in \R^d,
\end{equation}
where, for any $n \in \mathbb N_0$, $H_n$ is the \textit{probabilists' Hermite polynomial} of order $n$ defined as 
$$
H_n(y):=(-1)^n e^{\frac{y^2}{2}} \frac{d^n}{dy^n} e^{-\frac{y^2}{2}}, \quad \forall y \in \R.
$$
\end{definition}

\begin{lemma}
\label{normofgalpha}
Let $\alpha=(\alpha_i) \in \N_0^d $.  Then,
\begin{equation}
\label{normofgalphaEQ}
\|g_{\alpha}\|_{\mathcal{H}}=\sqrt{ \alpha !} = \sqrt{\alpha_1 ! \cdots \alpha_d !}\ .
\end{equation}
\end{lemma}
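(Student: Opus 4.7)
The plan is to exploit the tensor product structure of both $g_\alpha$ and the weight $f_\infty^{-1}$, reducing the computation to a product of one-dimensional integrals, and then to invoke the classical orthogonality relation $\int_\R H_n(y)^2 g(y)\,dy = n!$ for probabilists' Hermite polynomials.

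First I would write out $\|g_\alpha\|_{\mathcal{H}}^2$ using the definition \eqref{hermiteingalpha} and the product form $f_\infty(x) = \prod_i g(x_i)$. Since
\[
 g_\alpha(x)^2 f_\infty(x)^{-1} = \prod_{i=1}^d H_{\alpha_i}(x_i)^2\, g(x_i),
\]
Fubini gives
\[
 \|g_\alpha\|_{\mathcal{H}}^2 = \prod_{i=1}^d \int_\R H_{\alpha_i}(x_i)^2\, g(x_i)\,dx_i.
\]
This reduces the problem to showing that each one-dimensional factor equals $\alpha_i!$, from which \eqref{normofgalphaEQ} follows immediately by taking square roots.

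For the one-dimensional identity $\int_\R H_n(y)^2 g(y)\,dy = n!$, I would use the Rodrigues-type formula $H_n(y) g(y) = (-1)^n g^{(n)}(y)$, which is just the $d=1$, $\alpha=n$ case of \eqref{galpha}. Writing
\[
 \int_\R H_n(y)^2 g(y)\,dy = (-1)^n \int_\R H_n(y)\, g^{(n)}(y)\,dy,
\]
I would integrate by parts $n$ times. The boundary terms at $\pm\infty$ vanish because each intermediate derivative of $g$ is a polynomial times $g(y)$, which decays superexponentially; the factor $(-1)^n$ from moving all derivatives onto $H_n$ cancels the prefactor. This leaves $\int_\R H_n^{(n)}(y)\, g(y)\,dy$, and since $H_n$ is a monic polynomial of degree $n$, one has $H_n^{(n)} \equiv n!$, yielding $n!\int_\R g(y)\,dy = n!$.

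The argument is essentially routine: the only care required is to check the vanishing of the boundary terms in the integration by parts (immediate from the Gaussian decay) and to recall that $H_n$ is monic, which is built into the definition used here. No genuine obstacle arises; the lemma is really just a bookkeeping consequence of Hermite orthogonality combined with the product structure of the Gaussian in $d$ dimensions.
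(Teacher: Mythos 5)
Your proof is correct and follows exactly the paper's approach: factorize $\|g_\alpha\|_{\mathcal H}^2$ via \eqref{hermiteingalpha} and Fubini into a product of one-dimensional integrals, then apply the Hermite identity $\int_{\R}H_n(y)^2 g(y)\,dy = n!$. The only difference is that the paper simply cites \eqref{wnormofhermite} as known, whereas you also supply its standard proof by Rodrigues' formula and $n$-fold integration by parts.
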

\begin{proof}
We compute
\begin{align*}
\|g_{\alpha}\|^2_{\mathcal{H}}\bea{=} 
\int_{\R^d} \prod_{i=1}^d H_{\alpha_i}(x_i)^2 g(x_i)^2 g(x_i)^{-1} dx
 = \prod_{i=1}^d \int_{\R} H_{\alpha_i}(x_i)^2 g(x_i) dx_i=\prod_{i=1}^d \alpha_i!\ ,
\end{align*}
where we have used the following weighted $L^2$-norm of $H_n$:
\begin{equation}
\label{wnormofhermite}
\int_{\R} H_n(y)^2 g(y)\,dy=n! \,.
\end{equation}
\end{proof}

\begin{definition}
We define the index sets $S^{(m)}:=\{\alpha \in \N_0^d:|\alpha|=m \}$, $m \in \N_0$. For any $m \in \mathbb{N}_0$, the subspace $V^{(m)}$ of $\mathcal{H}$ is defined as
\begin{equation}
\label{Vmdef}
V^{(m)}:=\operatorname{span}_{\R}\left\{g_{\alpha}: \ \alpha \in S^{(m)} \right\} \,.
\end{equation}
\end{definition}
\begin{remark}
$V^{(m)}$ has dimension
\begin{equation}
\label{eq:Gamma_m}
 \Gamma_m:=|S^{(m)}|=\binom{d+m-1}{m}< \infty .
 \end{equation}
 \end{remark}
 Let us consider some examples. If $d=2$ we have
\begin{enumerate}
\item $V^{(0)}=\{ \beta_1 g_0(x) , \beta_1 \in \R \}$;
\item $V^{(1)}=\operatorname{span}{ \{ g_{(1,0)}, g_{(0,1)}    \}}= \operatorname{span}\left\{  x_1e^{-|x|^2/2}, \ x_2e^{-|x|^2/2} \right\}
\\
=\{(\beta_1x_1+\beta_2x_2)g_0(x), \ \beta_1,\beta_2 \in \R \}$;
\item $V^{(2)}= \operatorname{span}{ \{g_{(2,0)}, g_{(1,1)}, g_{(0,2)} \}}
\\
=\left \{ \left[\beta_1(x_1^2-1)+\beta_2 x_1x_2 +\beta_3(x_2^2-1)\right]g_0(x), \ \beta_i \in \R, \ i=1,2,3 \right \};
$
\item $V^{(3)}=\operatorname{span}{ \{ 
g_{(3,0)}, g_{(2,1)}, g_{(1,2)}, g_{(0,3)}
\}}
\\
=\big \{
\left[\beta_1\bea{(x_1^3-3x_1)+\beta_2(x_1^2x_2-x_2)+\beta_3(x_2^2x_1-x_1)+\beta_4(x_2^3-3x_2)}\right]g_0(x), 
\\
\beta_1,...,\beta_4 \in \R
\big \}.
$
\end{enumerate}
It is well known that $\{ g_{\alpha} \}_{\alpha \in \N_0^d}$ forms an orthogonal basis of $\mathcal{H} = L^2(\R^d,g_0^{-1})$. Hence, also the subspaces $V^{(m)}$ are mutually orthogonal.
This yields an orthogonal decomposition of the Hilbert space 
\begin{equation}
\label{Hdecompo}
\mathcal{H}= \bigoplus_{m \in \N_0} {}^{\bea{\!\!\!\!\!\perp}}\:\, V^{(m)}.
\end{equation}
\begin{remark}
In \cite[\S 5]{LNP} an alternative block diagonal decomposition of the FP-\bea{propagator} (when considered in the flat $L^2(\R^d)$) into finite-dimen\-sional subspaces is derived by using \textit{Wick quantization}.
\end{remark}
We also \bea{consider} the normalized version of the basis elements of the subspaces $V^{(m)}$:
\begin{definition}[Normalized basis]
For each fixed $\alpha \in \mathbb{N}_0^d$, we denote with $\tilde{g}_{\alpha}$ the normalized function 
$$\tilde{g}_{\alpha}:=\frac{g_{\alpha}}{\|g_{\alpha}\|_{\mathcal{H}}}.$$
\end{definition}

The reason why we need both $g_{\alpha}$ and $\tilde{g}_{\alpha}$ is that we can obtain a "nicer" evolution of $f(t)$ projected into $V^{(m)}$ in terms of the matrix $C$ with the first ones. Instead, the functions $\tilde{g}_{\alpha}$ can be used \bea{to express} the equivalence of norms by Plancherel's equality in the Hilbert space $\mathcal{H}$.

The orthogonal decomposition \eqref{Hdecompo} \bea{allows to express $f(t) \in L^2(\R^2, f_{\infty}^{-1})$, for a fixed $t \geq 0$, in the form}
\begin{equation}
\label{fdecomposed}
f(t,x)=\sum_{\alpha \in \N_0^d} \frac{\langle f(t),g_{\alpha} \rangle _{\mathcal{H}}}{\|g_{\alpha}\|^2_{\mathcal{H}}}g_{\alpha}(x)=:\sum_{\alpha \in \N_0^d} d_{\alpha}(t)g_{\alpha}(x),
\end{equation}
or in terms of the normalized basis,
\begin{equation}
\label{finVtildem}
f(t,x)=\sum_{\alpha \in \N_0^d} \langle f(t),\tilde{g}_{\alpha} \rangle _{\mathcal{H}}\tilde{g}_{\alpha}(x)=:\sum_{\alpha \in \N_0^d} \tilde{d}_{\alpha}(t)\tilde{g}_{\alpha}(x).
\end{equation}
The Fourier coefficients corresponding to a subspace $V^{(m)}$ \bea{can be} grouped into vectors \bea{in $\R^{\Gamma_m}$}:
$$
   d^{(m)} := \left(d_{\alpha}\right)_{\alpha \in S^{(m)}},  \bea{\text{ and \ }}  \tilde{d}^{(m)} := \left(\tilde{d}_{\alpha}\right)_{\alpha \in S^{(m)}}.
$$
\bea{By the completeness of the Hilbert orthonormal basis $\{\tilde{g}_{\alpha}\}_{\alpha \in \N_0^d}$ in $\mathcal{H}$,} Plancherel's Theorem then yields
\begin{equation}
\label{decompoff}
\|f\|_{\mathcal{H}}^{2}=\sum_{m\geq 0}\left\|\tilde{d}^{(m)}\right\|_2^2=\sum_{m \geq 0} \sum_{\alpha \in S^{(m)}} |\tilde{d}_{\alpha}|^2=\sum_{m \geq 0} \sum_{\alpha \in S^{(m)}} |d_{\alpha}|^2\|g_{\alpha}\|_{\mathcal{H}}^2,
\end{equation}
where we have used the relation $\tilde{d}_{\alpha}=\|g_{\alpha}\|_{\mathcal{H}}d_{\alpha}$.

Moreover, we denote by $(\Pi_mf)\in V^{(m)}$ the orthogonal \textit{projection of $f$ into $V^{(m)}$}. It is given by
$$
(\Pi_m f)= \sum_{\alpha \in S^{(m)}} d_{\alpha}g_{\alpha} =\sum_{\alpha \in S^{(m)} } \widetilde{d}_{\alpha} \widetilde g_{\alpha} \,.
$$
It follows that 
\begin{equation}
\label{linkProjVect}
\left\| \Pi_m f\right\|_{\mathcal{H}}=\left\|\tilde{d}^{(m)}\right\|_2 \,.
\end{equation}
In the next proposition we shall see that the subspaces $V^{(m)}$ are invariant under the action of the operator $L$, by giving the explicit action of $L$ on each basis element $g_{\alpha}$. For this purpose we introduce a notation for shifted multi-indices. 
\begin{definition}
Given $\alpha=(\alpha_i) \in \N_0^d$ and $l\in \langle d\rangle:=\{1,...,d \}$, we define the components of the multi-indices $\alpha^{(l-)},\,\alpha^{(l+)} \in \N_0^d$ as
$$
   \alpha^{(l\pm)}_j :=\alpha_j \quad \mbox{for } j \neq l \,, \qquad \alpha^{(l\pm)}_l :=(\alpha_l \pm1)_+ \,.
$$
\end{definition}
So, for instance, if $g_{\alpha}\in V^{(m)}$ and $\alpha_l> 0$, then $g_{\alpha^{(l-)}}\in V^{(m-1)}$ and
\linebreak
 $g_{(\alpha^{(l-)})^{(j+)}} \in V^{(m)}$. Note that cutting off negative values guarantees that $\alpha^{(l-)}$ is always an admissible multi-index.
This part of the definition will, however, not influence the following.

The action of the operator $L$ on $V^{(m)}$ \bea{can be} taken from \cite[Proposition 5.1 and its proof]{AE}:
\begin{prop}
\label{propVminv}
For every $m \in \N_0$, the subspace $V^{(m)}$ is invariant under $L$, its adjoint $L^*$ and, hence, the solution operator \bea{$e^{-Lt}$}, $t\geq 0$.
Moreover, for each $g_{\alpha}$, 
\begin{equation}
\label{LVm}
Lg_{\alpha}=-\sum_{j,l =1}^d \alpha_l C_{jl} g_{(\alpha^{(l-)})^{(j+)}}\ ,
\end{equation}
where $C_{jl}$ are the matrix elements of  \,$C$.
\end{prop}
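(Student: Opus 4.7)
The plan is to compute $Lg_\alpha$ directly from the Rodrigues-type definition $g_\alpha=(-1)^{|\alpha|}\partial_x^\alpha g_0$. Two elementary identities drive everything: first, differentiating the defining formula one coordinate further gives
$$
\partial_{x_l} g_\alpha \;=\; -\,g_{\alpha^{(l+)}}\,,
$$
and second, the three-term Hermite recurrence $xH_n(x)=H_{n+1}(x)+nH_{n-1}(x)$ combined with the product structure \eqref{hermiteingalpha} gives
$$
x_l\, g_\alpha \;=\; g_{\alpha^{(l+)}} + \alpha_l\, g_{\alpha^{(l-)}}\,.
$$
Both hold for every $\alpha\in\N_0^d$ and every $l\in\langle d\rangle$, with the convention $\alpha_l^{(l-)}=(\alpha_l-1)_+$ only mattering on a set where the coefficient $\alpha_l$ vanishes.

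Next I would expand the operator using the diffusion/drift form,
$$
Lg_\alpha \;=\; -\sum_{i,j=1}^d D_{ij}\,\partial_{x_i}\partial_{x_j}g_\alpha \;-\; \sum_{i,j=1}^d C_{ij}\,\partial_{x_i}\bigl(x_j g_\alpha\bigr),
$$
and substitute the two identities above. Writing out $\partial_{x_i}(x_j g_\alpha)=\delta_{ij}g_\alpha+x_j\partial_{x_i}g_\alpha$ and using $x_j g_{\alpha^{(i+)}}=g_{\alpha^{(i+)(j+)}}+(\alpha_j+\delta_{ij})\,g_{\alpha^{(i+)(j-)}}$, three groups of terms appear:

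\begin{enumerate}
\item[(i)] Pure second-order terms $-\sum_{i,j}D_{ij}\,g_{\alpha^{(i+)(j+)}}+\sum_{i,j}C_{ij}\,g_{\alpha^{(i+)(j+)}}$. Since $g_{\alpha^{(i+)(j+)}}$ is symmetric in $(i,j)$ and $C-D=C_{AS}$ is anti-symmetric (by the normalization $D=C_S$), these two sums cancel.
\item[(ii)] The scalar contributions $-\mathrm{tr}(C)\,g_\alpha$ from $\delta_{ij}g_\alpha$ and $+\mathrm{tr}(C)\,g_\alpha$ coming from the diagonal pieces $\delta_{ij}g_{\alpha^{(i+)(i-)}}=g_\alpha$ (the case $\alpha_i=0$ needs no separate treatment, as one checks that $\alpha^{(i+)(i-)}=\alpha$ unconditionally). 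These two also cancel.
\item[(iii)] What remains is $\sum_{i,j}C_{ij}\,\alpha_j\, g_{\alpha^{(i+)(j-)}}$, which is exactly \eqref{LVm} after renaming $i\leftrightarrow j$ and noting that for $\alpha_l>0$ the shifts commute, $(\alpha^{(l-)})^{(j+)}=\alpha^{(j+)(l-)}$, while for $\alpha_l=0$ the coefficient $\alpha_l$ annihilates any residual discrepancy at the boundary.
\end{enumerate}

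The $L$-invariance of $V^{(m)}$ is then an immediate by-product: every $g_{(\alpha^{(l-)})^{(j+)}}$ appearing in \eqref{LVm} has order $|\alpha|=m$. Invariance under the adjoint $L^*$ follows by the same computation performed with $C$ replaced by $C^T$ (which is what integration by parts in $\mathcal{H}$ produces for the drift), and invariance under $e^{Lt}$ is automatic once we know $V^{(m)}$ is a finite-dimensional $L$-invariant subspace.

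The only delicate bookkeeping step is (iii): matching my unsymmetric term $\sum_{i,j}C_{ij}\alpha_j g_{\alpha^{(i+)(j-)}}$ to the asymmetric-looking expression $\sum_{j,l}\alpha_l C_{jl}g_{(\alpha^{(l-)})^{(j+)}}$ in the statement. I expect this to be the main (though still essentially routine) obstacle, because the shift operations $(\cdot)^{(l-)}$ and $(\cdot)^{(l+)}$ fail to commute precisely at $\alpha_l=0$, and one must verify that the coefficient $\alpha_l$ in the formula exactly masks that boundary case.
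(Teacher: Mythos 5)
Your computation is essentially correct, and since the paper provides no proof of its own (the statement is cited from [AE, Proposition~5.1]), you are in effect filling a gap; the route via the Rodrigues-type definition \eqref{galpha} together with the two elementary identities $\partial_{x_l}g_\alpha=-g_{\alpha^{(l+)}}$ and $x_l g_\alpha=g_{\alpha^{(l+)}}+\alpha_l g_{\alpha^{(l-)}}$ is the natural one. The cancellations you organize into (i) and (ii) are both genuine: the second-order terms drop because $C-D=C_{AS}$ is anti-symmetric while $g_{\alpha^{(i+)(j+)}}$ is symmetric in $(i,j)$, and the two $\mathrm{tr}(C)g_\alpha$ contributions cancel exactly as you say, with your observation that $\alpha^{(i+)(i-)}=\alpha$ holds unconditionally being the right way to dispose of the boundary case. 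Your handling of the non-commutativity of the shifts at $\alpha_l=0$ in step (iii) is also correct: the coefficient $\alpha_l$ masks precisely the discrepancy.

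However, you have not noticed a genuine sign discrepancy, and you paper over it by asserting that your surviving term ``is exactly \eqref{LVm}''. It is not. Your computation yields
$$
Lg_\alpha=\sum_{j,l=1}^d \alpha_l\, C_{jl}\, g_{(\alpha^{(l-)})^{(j+)}},
$$
with a $+$ sign, whereas \eqref{LVm} as printed has a $-$ sign. You should have flagged this. In fact, your sign is the correct one: a one-line sanity check in $d=1$ with $\alpha=1$ gives $g_1=xg_0$, $Dg_1'+Cxg_1=Cg_0$, hence $Lg_1=-\partial_x(Cg_0)=Cxg_0=+Cg_1$, i.e.\ a positive eigenvalue of $L$ (so $\partial_tf=-Lf$ decays, as it must). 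Moreover, the paper's own use of \eqref{LVm} in the proof of Proposition~\ref{formulaevolVm} — where the substitution produces $\sum_\alpha \dot d_\alpha g_\alpha = -\sum_{j,l}\sum_{\alpha:\alpha_l\ge1}d_\alpha\,\alpha_l C_{jl}\,g_{(\alpha^{(l-)})^{(j+)}}$ and leads to the correct decay $\dot d^{(1)}=-Cd^{(1)}$ — is consistent with the $+$ sign in $Lg_\alpha$, not the $-$ sign written in \eqref{LVm}. So the $-$ in the statement is a typo. The substance of your proof is right, but a proof should notice when it disagrees with the target formula, diagnose which is wrong, and say so explicitly rather than claiming agreement.

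One minor point: the invariance under $L^*$ is asserted by ``the same computation with $C$ replaced by $C^T$''. This is true, but it is worth a sentence of justification: from the representation $-Lf=\mathrm{div}(f_\infty C\nabla(f/f_\infty))$ in \eqref{NormalizedFPwithC}, the adjoint in $\mathcal H=L^2(f_\infty^{-1})$ is obtained by transposing $C$, and since $(C^T)_S=C_S=D$ the normalization hypothesis is preserved, so \eqref{LVm} applies verbatim with $C^T$.
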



\subsection{Evolution of the Fourier coefficients}
In this section we shall derive the evolution of $\Pi_m f$ in terms of the Fourier coefficients $d^{(m)}$:
\begin{prop}
\label{formulaevolVm}
Let $f$ satisfy the FP-equation \eqref{NormalizedFPwithC}. Then the coefficients in the expansion \eqref{fdecomposed}
satisfy
\begin{equation}
\label{evolutiondm}
\bea{\frac{d}{dt}}d_{\alpha}= -\sum_{j,l=1}^{d} \mathbb{1}_{\alpha_j\ge 1}(\alpha^{{(j-)}})^{(l+)}_l C_{jl} d_{(\alpha^{(j-)})^{(l+)}} \,,\qquad 
\alpha \in \N_0^d\,.
\end{equation}
\end{prop}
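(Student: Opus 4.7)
The plan is a direct calculation. Differentiating \eqref{fdecomposed} in $t$ and using the Fokker-Planck equation $\partial_t f=-Lf$ gives
\begin{equation*}
   \sum_{\alpha\in\N_0^d}\dot d_\alpha(t)\,g_\alpha \;=\; -\sum_{\alpha\in\N_0^d} d_\alpha(t)\,Lg_\alpha,
\end{equation*}
into which Proposition~\ref{propVminv} allows us to substitute \eqref{LVm}, turning the right-hand side into a triple sum over $(\alpha,j,l)$ whose summands are scalar multiples of $g_{(\alpha^{(l-)})^{(j+)}}$. Termwise differentiation is harmless: by Proposition~\ref{propVminv} each subspace $V^{(m)}$ is $L$-invariant, so the decomposition \eqref{Hdecompo} reduces everything to a finite linear ODE in the $\Gamma_m$ coefficients $(d_\alpha)_{\alpha\in S^{(m)}}$, block by block.

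The heart of the proof is a change of summation variable on the right-hand side. For each fixed pair $(j,l)$, the operation $\alpha\mapsto\beta:=(\alpha^{(l-)})^{(j+)}$ decreases the $l$-th entry of $\alpha$ by one (when $\alpha_l\ge 1$) and raises its $j$-th entry by one; wherever defined, the inverse is $\beta\mapsto(\beta^{(j-)})^{(l+)}$. Two boundary issues need attention. First, when $\alpha_l=0$ the operation $(\cdot)^{(l-)}$ is truncated and $\alpha$ falls outside the image of the inverse map, but the factor $\alpha_l$ inherited from \eqref{LVm} already annihilates the summand, so this case costs nothing. Second, the inverse is defined only when $\beta_j\ge 1$; this is precisely the origin of the indicator $\mathbb{1}_{\alpha_j\ge 1}$ in \eqref{evolutiondm}. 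A brief case check ($j=l$ versus $j\ne l$) shows that in either case the coefficient $\alpha_l$ becomes exactly $((\beta^{(j-)})^{(l+)})_l$, the integer factor appearing in \eqref{evolutiondm}.

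Finally, orthogonality of $\{g_\alpha\}_{\alpha\in\N_0^d}$ in $\mathcal{H}$ makes the expansion of both sides unique, so coefficients of $g_\beta$ may be equated. Renaming $\beta\to\alpha$ yields \eqref{evolutiondm}. The only obstacle is not analytic but combinatorial: the careful bookkeeping of the multi-index shifts. Each piece of \eqref{evolutiondm}, namely the indicator $\mathbb{1}_{\alpha_j\ge 1}$, the scalar factor $((\alpha^{(j-)})^{(l+)})_l$, and the shifted multi-index $(\alpha^{(j-)})^{(l+)}$, encodes respectively the admissibility of the source multi-index, the Hermite-weighted coefficient inherited from \eqref{LVm}, and the source multi-index itself; once these are kept straight no further input is required.
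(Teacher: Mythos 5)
Your proof is correct and follows the same route as the paper: substitute \eqref{LVm} into the differentiated expansion, perform the change of index $\alpha\mapsto\beta=(\alpha^{(l-)})^{(j+)}$ (with inverse $\beta\mapsto(\beta^{(j-)})^{(l+)}$, valid exactly when $\beta_j\ge1$, which produces the indicator), and identify coefficients via uniqueness of the orthogonal expansion. Your extra remarks on the vanishing of the $\alpha_l=0$ terms and the block-diagonal reduction to finitely many ODEs merely make explicit what the paper leaves implicit.
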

\begin{proof}
We substitute \eqref{fdecomposed} into \eqref{NormalizedFPwithC} and use \eqref{LVm}:
$$
   \sum_{\alpha \in \N_0^d} \bea{\frac{d}{dt}}d_{\alpha} g_\alpha = - \sum_{j,l =1}^d \sum_{\alpha:\, \alpha_l\ge 1} d_\alpha 
    \alpha_l C_{jl} g_{(\alpha^{(l-)})^{(j+)}}  \,.
 $$
 In the sum over $\alpha$ on the right hand side we substitute 
 $$
      (\alpha^{(l-)})^{(j+)}=\beta \quad\Longleftrightarrow\quad \alpha = (\beta^{(j-)})^{(l+)} \,,
 $$
 leading to
 \begin{eqnarray*}
   \sum_{\alpha \in \N_0^d} \bea{\frac{d}{dt}} d_{\alpha} g_\alpha &=& - \sum_{j,l =1}^d \sum_{\beta:\, \beta_j\ge 1} d_{(\beta^{(j-)})^{(l+)}} 
    (\beta^{(j-)})^{(l+)}_l C_{jl} g_\beta \\
    &=&  \sum_{\beta \in \N_0^d} \left( -\sum_{j,l=1}^{d} \mathbb{1}_{\beta_j\ge 1}(\beta^{{(j-)}})^{(l+)}_l C_{jl} 
     d_{(\beta^{(j-)})^{(l+)}} \right) g_\beta\,,
\end{eqnarray*}
completing the proof.
\end{proof}
\bea{
\begin{remark}\label{matrix-Cm}
{}From the family of equations \eqref{evolutiondm} we can deduce: The vector $d^{(m)}=(d_{\alpha})_{\alpha \in S^{(m)}} \in \R^{\Gamma_m}$ satisfies the ODE  $\frac{d}{dt}d^{(m)}=-C^{(m)} d^{(m)}$ for some matrix $C^{(m)} \in \R^{\Gamma_m \times \Gamma_m}$. Actually, we shall not write down the matrix $C^{(m)}$ explicitly, as we shall not need it. 
\end{remark}}
As the simplest example we shall first consider the evolution in $V^{(1)}$. We use the notation 
$S^{(1)} = \{\alpha(1),\ldots, \alpha(d)\}$ with $\alpha(k)_j = \delta_{jk}$, $j,k=1,\ldots,d$. In the right hand side of \eqref{evolutiondm} with $\alpha=\alpha(k)$ obviously only the terms with $j=k$ are nonzero, 
$(\alpha(k)^{{(k-)}})^{(l+)}=\alpha(l)$ and, thus, $(\alpha(k)^{{(k-)}})^{(l+)}_l=1$. This implies
$$
 \bea{ \frac{d}{dt}} d_{\alpha}= - \sum_{l=1}^d C_{kl} d_{\alpha(l)}
$$
and therefore 
\begin{equation}\label{d1ev}
 \bea{\frac{d}{dt}}d^{(1)}= -C d^{(1)} \qquad\mbox{for } d^{(1)} = \left(d_{\alpha(1)},\ldots,d_{\alpha(d)}\right) \,.
\end{equation}
We define $h(t):=\left\|e^{-Ct}\right\|_{\mathcal{B}(\R^d)}$. Then \eqref{d1ev} implies 
\begin{equation}
\label{d1}
h(t) = \sup_{ 0 \neq \tilde{d}^{(1)}(0) \in \R^{\Gamma_1}}   \frac{\|\tilde{d}^{(1)}(t)\|_2 }{ \|\tilde{d}^{(1)}(0)\|_2} \,, \qquad t \geq 0 \,.
\end{equation}
To analyze the evolution in $V^{(m)}$, $m \geq2$, it turns out that the representation of $d^{(m)}$ as a vector is not convenient. In the next section we shall rather represent it as a tensor. Not as a tensor of order $d$,
as the number of components of $\alpha$ would indicate, but as a symmetric tensor of order $m$ over $\R^d$.
This way it will be easier to characterize its evolution -- in fact as a tensored version of \eqref{d1ev}.


\section{Subspace evolution in terms of tensors}

\subsection{Order-$m$ tensors}
In this subsection we briefly review some notations and basic results on tensors that will be needed. Most of their elementary proofs are deferred to the appendix. For more details we refer the reader to \cite{CGLM} and \cite{L}.

Let $m\in\N$ be fixed. \bea{We note that along the paper the convention $\N=\{1,2,...\}$, excluding zero, is used}.
\begin{definition}
For $n_1,...,n_m \in \mathbb{N}$, a function $h: \langle n_1 \rangle \times \cdots \times \langle n_m \rangle  \rightarrow \R$ is a (real valued) \textit{hypermatrix}, also called \textit{order-$m$ tensor } or \textit{$m$-tensor}, where $\langle n_k \rangle:=\{1,...,n_{k} \}$, $\forall 1 \leq k \leq m$. 
We denote the set of values of $h$ by an $m$-dimensional table of values, calling it $A=(A_{{i_1}...{i_m}})_{i_1,...,i_m=1}^{n_1,...,n_m},$ or just $A=(A_{{i_1}...{i_m}})$. 
The set of order-$m$ hypermatrices (with domain $\langle n_1 \rangle \times \cdot \cdot \cdot \times  \langle n_m \rangle $) is denoted by $T^{n_1 \times \cdots \times n_m}$.

We will consider only the case in which $n_1=\cdots=n_m=d$, i.e., $A=(A_{i_1...i_m})_{i_1,...,i_m=1}^d$.
In this case, we will denote $T^{(m)}_d:=T^{d \times \cdot \cdot \cdot \times d}$ for simplicity. Also, since in our case the dimension $d$ is fixed, we will denote it by $T^{(m)}$.
Then $A \in T^{(m)}$ is a function from $\langle d\rangle^m$ to $\R$, denoted by $A=(A_{I})_{I \in \langle d\rangle^m}$.

\end{definition}
It will be useful to define some operations on $T^{(m)}_d$:
\begin{definition}
It is natural to define the operations of entrywise addition and scalar multiplication that make $T^{(m)}$ a vector space in the following way: for any $A,B \in T^{(m)}$ and $\gamma \in \R$
$$(A+B)_{i_1...i_m}:=A_{i_1...i_m}+B_{i_1...i_m}, \quad  (\gamma A)_{i_1...i_m}:= \gamma A_{i_1...i_m}.$$
Moreover, given $m$ matrices $B_1=(b^{(1)}_{ij}), ... , B_m=(b^{(m)}_{ij}) \in \R^{d \times d}=T^{(2)}$ and $A \in T^{(m)}$, we define the \textit{multilinear matrix multiplication} by 
\\
$A':=(B_1,...,B_m)\odot A \in T^{(m)}$ where
\begin{equation}
A'_{i_1...i_m}:= \sum_{j_1,...,j_m=1}^d b^{(1)}_{i_1j_1} \cdot \cdot \cdot b^{(m)}_{i_mj_m} A_{j_1...j_m}.
\end{equation}
For $A \in T^{(m)}$ and $k\leq m $ matrices $B_1,...,B_k \in T^{(2)}$, we also define the product    $A':=(B_1,...,B_k)\odot A \in T^{(m)}_d$ in the following way:
$$
A'_{i_1...i_m}:= \sum_{j_1,...,j_k=1}^d b^{(1)}_{i_1j_1} \cdot \cdot \cdot b^{(k)}_{i_kj_k} A_{j_1...j_ki_{k+1}...i_{m}},
$$
i.e., the multiplication acts on the first $k$-indices of $A$.
For simplicity, when $B_1=...=B_k:=B$, we will denote $(B_1,...,B_k) \odot A$ by $B \odot^{k} A$.
For example, if $d=4$ and given $B=(b_{ij}) \in \R^{4\times 4}, A\in T^{(3)}$,
$$
(B \odot A)_{i_1i_2i_3}= \sum_{j=1}^{4} b_{i_1j} A_{ji_2i_3},
$$
and 
$$
B \odot^{3} A=(B,B,B) \odot A.
$$
\end{definition}
Finally, we equip $T^{(m)}$ with an inner product:
\begin{definition}
Let $A=(A_{i_1...i_m}), B=(B_{i_1...i_m}) \in T^{(m)}$, we call $\langle A,B \rangle_{\mathcal{F}} \in \R$ the \textit{Frobenius inner product} between the $m$-tensors $A$ and $B$, defined by
$$
 \langle A,B \rangle _{\mathcal{F}}:= \sum_{i_1,...,i_m=1}^d  A_{i_1...i_m}B_{i_1...i_m}.
$$
This induces a norm in $T^{(m)}$, called \textit{Frobenius norm} in the natural way: 
$$\|A\|_{\mathcal{F}}:=\sqrt{ \langle A,A \rangle _{\mathcal{F}}}=\left(\sum_{i_1,...,i_m=1}^d(A_{i_1...i_m})^2\right)^{1/2} \geq 0 .
$$
\end{definition}

\begin{definition}
The tensor $D=(D_{I})_{I \in \langle d\rangle^m} \in T^{(m)}$ is called \textit{symmetric}, if $\forall I \in$ $\langle d\rangle^m$ it is true that $D_{I}=D_{\sigma(I)}$ for every permutation $\sigma$ \bea{of $m$ elements}.
\bea{Then} $F^{(m)} \subset T^{(m)}$ (and occasionally $F^{(m)}_d$) denotes the set of symmetric $m$-tensors.
Given $A\in T^{(m)}$, we define the \textit{symmetric part of $A$} as the symmetric tensor defined by
$$
\mathrm{Sym}{A}:= \frac{1}{m!} \sum_{ \sigma \in \mathcal{P}} \sigma(A) \in F^{(m)}, 
$$
where $\mathcal{P}$ is the \bea{group} of permutations \bea{of $m$ elements} and $\sigma(A)$ is the tensor with components $\sigma(A)_I:=A_{\sigma(I)}$, $\forall I \in \langle d\rangle^m$.
\end{definition}

\begin{remark}
\label{identsymm}
For a symmetric tensor $D \in F^{(m)}$, clearly we do not need to define $D_{I}$ for each $I=(i_1,...,\bea{i_m})\in \langle d\rangle^m$ since the value of $D_{I}$ depends only on the number of occurrences of each value in the index $I$. Therefore,  we define the function $\vphi:\,\langle d\rangle^m\to S^{(m)}$ with
$$
  \vphi_k(I):= \sum_{j=1}^{m} \bea{\delta_{k,i_j}},\quad 
  \forall k=1,...,d \quad \mbox{and for each}\quad I=(i_1,...,i_m) \in \langle d\rangle^m\ ,
$$
\bea{where $\delta_{k,i}$ denotes the Kronecker symbol.}
Hence, the component $\vphi_k$ counts the occurrences of $k$ in the multi-index $I$.
Then, $\forall I \in  \langle d\rangle^m$ we define the multi-index $\vphi(I) \in S^{(m)}$ 
as 
\linebreak
$\vphi(I)=(\vphi_1(I),...,\vphi_d(I))$. We observe that $\vphi(I)$ is \bea{well defined}, since 
\linebreak
$\sum_{k=1}^{d} \vphi_k(I)=m$, for any $I  \in  \langle d\rangle^m$.
\end{remark}

For the computation of the Frobenius norm of a symmetric tensor it will be useful to introduce the following index classes:
\begin{definition}
\label{remark2}
For a fixed $I \in \langle d\rangle^m$ we define the \textit{\bea{equivalence} class of $I$ under the action of $\vphi$} as
$$
[I]_{\vphi}:=\{ J \in \langle d\rangle^m: \ \vphi(I)=\vphi(J) \}\ ,
$$
and the set of classes 
$$
\langle d\rangle^m/ \vphi := \{ [I]_{\vphi}: \ I \in \langle d\rangle^m \}\ .
$$
\end{definition}
It is easy to show that there is a bijection between the quotient set $\langle d\rangle^m/\vphi$ and $S^{(m)}$ through the identification $[I]_\vphi \subset \langle d\rangle^m$ and $\alpha=\vphi(I)$, for each $\alpha \in S^{(m)}$.
We observe that:
\begin{itemize}
\item If $\vphi(I)=\alpha=(\alpha_1,...,\alpha_d)$, then $[I]_{\vphi}$ has exactly $\gamma_{\alpha}=\frac{m!}{\alpha_1 ! \cdots \alpha_d !}$ elements.
\item If $D=(D_{I})_{I \in \langle d\rangle^m}$ is symmetric, then $D_{I}=D_{J}$ if $I$ and $J$ are in the same class.
\end{itemize}
We will use these two properties in the proof of Proposition \ref{prop-FrobD}, for example to compute the Frobenius norm of a symmetric tensor. 

\begin{definition}
\label{defsymmalpha}
Let $D=(D_{I})$ be a symmetric $m$-tensor and $I \in \langle d\rangle^m$. Then, for any $\alpha=(\alpha_1,...,\alpha_d) \in S^{(m)}$ we define
$$
D_{\alpha}:=D_{I}, \quad \text{if } \alpha=(\vphi_1(I),...,\vphi_d(I)).
$$
We observe that this notion is well-defined since $D$ is symmetric and the property $\vphi(I)=\vphi(\sigma(I))$ holds.
\end{definition}
The previous definition shows that \bea{$\vphi$ induces} a one-to-one correspondence between the indices of a symmetric $m$-tensor and the elements of $S^{(m)}$. This implies that the dimension of $F^{(m)}$ is equal to the cardinality of $S^{(m)}$, i.e. $\Gamma_m$ \bea{(see \eqref{eq:Gamma_m})}.
Hence, for defining $D \in F^{(m)}$ we just need to define $D_{\alpha}$ for every $\alpha \in S^{(m)}$. 

Next we define the order-$m$ outer product and discuss the rank-1 decomposition of tensors, using a result from 
\bea{multilinear algebra (\cite{CGLM}, Lem\-ma 4.2)}.
\begin{definition}
Let $v_i:=(v^{(i)}_1,...,v^{(i)}_d)$, $i=1,...,m$ be $m$ vectors in $\R^d$. We define $v_1\otimes \cdots  \otimes v_m \in T^{(m)}$ as the $m$-tensor with components 
$$
(v_1 \otimes \cdots \otimes v_m)_{I}:= v^{(1)}_{i_1} \cdots v^{(m)}_{i_m}, \quad \forall I=(i_1,...,i_m) \in \langle d\rangle^m.
$$
We call this operation between $m$ vectors, \textit{$m$-outer product}.

In the special case of all the vectors $v_i=v \in \R^d$, $i=1,...,m$ equal, we denote 
$$
v^{\otimes m}:=v\otimes \cdots \otimes v,
$$
and we observe that the tensor $v^{\otimes m}$ is symmetric by definition.
\end{definition}
\begin{prop}[\cite{CGLM}, Lemma 4.2]
\label{TEOdec}
Let $D \in F^{(m)}_d$. Then, there exist an integer $s\in \bea{\langle \Gamma_m \rangle}$, numbers $\lambda_1,...,\lambda_s \in \R$, and vectors $v_1,...,v_s \in \R^d$ such that
\begin{equation}
\label{decompD}
D=\sum_{k=1}^s \lambda_k v_k^{\otimes m}.
\end{equation}
The minimum $s$ such that \eqref{decompD} holds is called \textit{the symmetric rank of $D$}.
\end{prop}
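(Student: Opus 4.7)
The plan is to prove two facts about $F^{(m)}_d$: (i) the rank-one symmetric tensors $\{v^{\otimes m}:v\in\R^d\}$ span $F^{(m)}_d$, and (ii) $\dim F^{(m)}_d=\Gamma_m$. Once both are established, extracting a basis of $F^{(m)}_d$ from the spanning set in (i) yields vectors $v_1,\ldots,v_s\in\R^d$ with $s\in[1,\Gamma_m]$ and unique coefficients $\lambda_k\in\R$ such that $D=\sum_{k=1}^s\lambda_k v_k^{\otimes m}$, which is exactly the claim.

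The main step is (i), and the key tool is the classical polarisation identity
\begin{equation*}
m!\,\mathrm{Sym}(v_1\otimes\cdots\otimes v_m)=\sum_{S\subseteq\langle m\rangle}(-1)^{m-|S|}\Bigl(\sum_{i\in S}v_i\Bigr)^{\otimes m},\qquad v_1,\ldots,v_m\in\R^d.
\end{equation*}
I would verify this by expanding each power on the right through $m$-linearity and regrouping according to the image $T\subseteq\langle m\rangle$ of the index map $f\colon\langle m\rangle\to\langle m\rangle$: a pure-tensor term $v_{f(1)}\otimes\cdots\otimes v_{f(m)}$ with $f(\langle m\rangle)=T$ picks up the coefficient $\sum_{S\supseteq T}(-1)^{m-|S|}=(1-1)^{m-|T|}$, which vanishes unless $T=\langle m\rangle$, i.e.\ unless $f$ is a permutation; summing over the $m!$ permutations reproduces the left-hand side. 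Applying the identity with $v_j=e_{i_j}$ for an arbitrary $(i_1,\ldots,i_m)\in\langle d\rangle^m$ exhibits each basic symmetric tensor $\mathrm{Sym}(e_{i_1}\otimes\cdots\otimes e_{i_m})$ as a real linear combination of at most $2^m$ rank-one symmetric tensors.

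For (ii), the tensor $\mathrm{Sym}(e_{i_1}\otimes\cdots\otimes e_{i_m})$ depends only on the class $[I]_\vphi$ (cf.\ Remark \ref{remark2}), i.e.\ on $\alpha=\vphi(i_1,\ldots,i_m)\in S^{(m)}$, and as $\alpha$ ranges over $S^{(m)}$ these tensors form a basis of $F^{(m)}_d$: they span because every symmetric tensor is determined by its values $D_\alpha$ (Definition \ref{defsymmalpha}), and they are linearly independent because their supports in $\langle d\rangle^m$ (the classes $[I]_\vphi$) are disjoint. Hence $\dim F^{(m)}_d=|S^{(m)}|=\Gamma_m$, and combining (i) with this dimension count gives the asserted decomposition with $s\le\Gamma_m$. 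The only nonroutine ingredient is the polarisation identity; everything else is bookkeeping in finite-dimensional linear algebra.
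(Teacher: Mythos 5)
Your proof is correct, but the route is genuinely different from the paper's: the paper does not prove Proposition \ref{TEOdec} at all. It cites \cite{CGLM}, Lemma 4.2, which treats tensors over $\C$ (an algebraically closed field, so all $\lambda_k$ can be normalized to $1$), and then justifies the carry-over to the real setting via the private communication \cite{CGLMpc}. Your argument instead gives a short, self-contained proof that works directly over $\R$ (indeed over any field where $m!$ is invertible): the polarisation identity shows that the $m$-th powers $v^{\otimes m}$ span $F^{(m)}_d$, and the dimension count $\dim F^{(m)}_d=\Gamma_m$ via the disjoint-support basis $\{\mathrm{Sym}(e_{i_1}\otimes\cdots\otimes e_{i_m})\}$ then gives $s\le\Gamma_m$. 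The binomial cancellation $\sum_{S\supseteq T}(-1)^{m-|S|}=(1-1)^{m-|T|}$ is the right bookkeeping, and the surjectivity argument (so that only permutations survive) is clean. What your approach buys is a purely linear-algebraic, field-agnostic derivation that removes the appeal to an unpublished communication and to the algebraic-geometric machinery behind \cite{CGLM}; what the paper's citation buys is, via \cite{CGLM}, sharper information on the symmetric rank itself (generic rank, uniqueness questions), none of which is actually needed here. One cosmetic point: extracting a basis from the spanning set gives exactly $\Gamma_m$ rank-one tensors, so the representation of $D$ a priori has $\Gamma_m$ terms; dropping the zero coefficients yields $s\le\Gamma_m$, with the degenerate case $D=0$ handled by $D=0\cdot v_1^{\otimes m}$, so $s\ge 1$. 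You may want to state this explicitly.
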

\begin{remark}
In \cite{CGLM} the result is stated for complex tensors. In that case it is possible to choose all the coefficients $\lambda_i$ in \eqref{decompD} equal to one, due to the fact that $\C$ is a closed field.  We remark that the same decomposition carries over to the real case, i.e. with real coefficients $\lambda_i$ and real vectors $v_i$, by using the same proof \cite{CGLMpc}.
\end{remark}
It is easy to see that this rank-1 decomposition persists under a (constant) multilinear matrix multiplication:
\begin{lemma}
\label{fact1}
\bea{Let $B \in \R^{d \times d}$. For any $D \in F^{(m)}_d$ decomposed as in formula \eqref{decompD}, the following decomposition holds:}
\begin{equation}
\label{decomwithB}
B \odot^m D = \sum_{k=1}^{s} \lambda_k (B v_k)^{\otimes^m}.
\end{equation}
\end{lemma}

For rank-1 tensors, their inner product simplifies as follows:
\begin{lemma}
\label{fact2}
Given $v_k=(v^{(k)}_i) \in \R^d$, $k=1,...,2m$, then
\begin{equation}
\label{eqfact2}
\langle v_1 \otimes \cdots \otimes v_m, v_{m+1} \otimes \cdots \otimes v_{2m} \rangle _{\mathcal{\F}}=  \prod_{i=1}^m \langle v_i, v_{i+m} \rangle,
\end{equation}
where $\langle v_i,v_j \rangle$ is the inner product in $\R^d$.
\end{lemma}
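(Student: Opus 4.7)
The statement is a routine component-level calculation, so the plan is essentially to unwind the definitions of the outer product and of the Frobenius inner product, and then recognise that the resulting sum over the multi-index $I=(i_1,\ldots,i_m)\in\langle d\rangle^m$ factorises as a product of one-dimensional sums, each of which is a Euclidean inner product in $\R^d$.

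Concretely, I would first write, for each $k=1,\ldots,2m$, the vector $v_k=(v^{(k)}_1,\ldots,v^{(k)}_d)\in\R^d$, and use the definition of the $m$-outer product to record
\[
(v_1\otimes\cdots\otimes v_m)_{i_1\cdots i_m}=v^{(1)}_{i_1}\cdots v^{(m)}_{i_m},\qquad
(v_{m+1}\otimes\cdots\otimes v_{2m})_{i_1\cdots i_m}=v^{(m+1)}_{i_1}\cdots v^{(2m)}_{i_m}.
\]
Plugging these into the Frobenius inner product then gives
\[
\langle v_1\otimes\cdots\otimes v_m,\,v_{m+1}\otimes\cdots\otimes v_{2m}\rangle_{\mathcal{F}}
=\sum_{i_1,\ldots,i_m=1}^d\prod_{k=1}^m v^{(k)}_{i_k}v^{(k+m)}_{i_k},
\]
after grouping together, for each $k$, the two factors carrying the summation index $i_k$.

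The only real step is to observe that the $m$ summation indices $i_1,\ldots,i_m$ are independent of each other and each appears in exactly one factor of the product, so Fubini's/distributivity-of-sum-over-product yields
\[
\sum_{i_1,\ldots,i_m=1}^d\prod_{k=1}^m v^{(k)}_{i_k}v^{(k+m)}_{i_k}
=\prod_{k=1}^m\sum_{i_k=1}^d v^{(k)}_{i_k}v^{(k+m)}_{i_k}
=\prod_{k=1}^m\langle v_k,v_{k+m}\rangle,
\]
which is exactly \eqref{eqfact2}.

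There is no genuine obstacle in this proof; it is a three-line bookkeeping argument. The only thing worth being a little careful about is consistent notation for the components of the vectors $v_k$ (to avoid confusing the label $k$ of the vector with the position in the outer product), and the factorisation step, which is elementary but is the one place where the symmetry between the two halves $v_1,\ldots,v_m$ and $v_{m+1},\ldots,v_{2m}$ of the list is actually used.
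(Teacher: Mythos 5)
Your proof is correct and follows the same approach as the paper's: unwind the definitions of the outer product and the Frobenius inner product, then factorise the resulting multi-sum into a product of $m$ independent one-index sums, each of which is a Euclidean inner product. The paper's version writes out the intermediate line with the products unrearranged before factorising, but the argument is identical.
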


A special case of this lemma is given by
\begin{corollary}
\label{corfact2}
Given $v_1,v_2 \in \R^d$, then
\begin{equation}
\label{eqcorfact2}
\langle v_1^{\otimes^m}, v_2^{\otimes^m} \rangle _{\mathcal{F}}=  \langle v_1,v_2 \rangle ^m.
\end{equation}
\end{corollary}

Next we shall derive some results on matrix-tensor products $B\odot^k A$:
\begin{lemma}
\label{step2}
Let $B=B^T \in \R^{d \times d}$ be such that $B \geq 0$. Then, for any $A \in T^{(m)}$
\begin{equation}
\label{step2eq}
\langle A,  B \odot A \rangle _{\mathcal{F}} \geq 0.
\end{equation}
\end{lemma}
For $B \in \R^{d \times d}$, $\|B\|$ we will denote in the sequel the \textit{spectral norm of B}.
\begin{lemma}
\label{LEM4}
For any $A \in T^{(m)}_d$, $B \in \R^{d \times d}$ and $1\leq k \leq m$,
\begin{equation}
\label{EQLEM4}
 \|B \odot ^k A \|_{\mathcal{F}} \leq \| B\|^{k} \|A\| _{\mathcal{F}}.
\end{equation}
\end{lemma}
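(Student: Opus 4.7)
The plan is to prove \eqref{EQLEM4} by induction on $k$.

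For the base case $k=1$, I would slice $A$ along its first index: for each tuple $(i_2,\ldots,i_m)\in\langle d\rangle^{m-1}$ define the vector $x^{(i_2\ldots i_m)}\in\R^d$ by $(x^{(i_2\ldots i_m)})_j := A_{j\,i_2\ldots i_m}$. Unwinding the definition of $\odot$ gives $(B\odot A)_{i_1 i_2\ldots i_m} = (Bx^{(i_2\ldots i_m)})_{i_1}$, so applying the operator-norm inequality $\|Bx^{(i_2\ldots i_m)}\|_2^2\le\|B\|^2\,\|x^{(i_2\ldots i_m)}\|_2^2$ slicewise, summing over $(i_2,\ldots,i_m)$, and reassembling the Frobenius norms yields $\|B\odot A\|_{\mathcal{F}}\le\|B\|\,\|A\|_{\mathcal{F}}$.

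For the induction step I would first observe that the same slicing argument works for \emph{any} single slot, not only the first: if one applies $B$ to the $\ell$-th index only of some $M\in T^{(m)}$ (write the result as $B\star_\ell M$), then freezing the other $m-1$ indices produces vectors in $\R^d$ to which the base-case estimate applies, yielding $\|B\star_\ell M\|_{\mathcal{F}}\le \|B\|\,\|M\|_{\mathcal{F}}$. A direct inspection of the multilinear definition then gives the factorization
\[
B\odot^k A \;=\; B\star_k\!\bigl(B\odot^{k-1} A\bigr),
\]
which holds because the operations on distinct slots commute. Combining the slotwise estimate with the inductive hypothesis gives
\[
\|B\odot^k A\|_{\mathcal{F}} \;\le\; \|B\|\,\|B\odot^{k-1} A\|_{\mathcal{F}} \;\le\; \|B\|^{k}\,\|A\|_{\mathcal{F}},
\]
closing the induction.

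There is no genuine obstacle here: the inequality is simply the operator-norm bound $\|Bx\|_2\le \|B\|\,\|x\|_2$ applied one index at a time. The only steps that require some care are the index bookkeeping in the slotwise slicing (to justify the bound $\|B\star_\ell M\|_{\mathcal{F}}\le\|B\|\,\|M\|_{\mathcal{F}}$ for a general slot $\ell$) and the verification of the factorization $B\odot^k = B\star_k\circ B\odot^{k-1}$ by rewriting the nested summation over $j_1,\ldots,j_k$.
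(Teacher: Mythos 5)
Your proposal is correct and takes essentially the same approach as the paper: the base case $k=1$ is proved by slicing $A$ along the first index and applying the operator-norm bound $\|Bx\|_2\le\|B\|\,\|x\|_2$ slicewise, and the general case follows by iterating the same single-slot estimate across the first $k$ indices. Your explicit factorization $B\odot^k A = B\star_k(B\odot^{k-1}A)$ is precisely what the paper's phrase ``iterated applications'' asserts, just spelled out more carefully.
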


\subsection{Time evolution of the tensors $D^{(m)}(t)$ in $V^{(m)}$}
Proposition \ref{formulaevolVm} gives the time evolution of each vector $d^{(m)}$. But for $m\ge2$ it does not reveal its inherent structure. Therefore we shall now regroup the elements of $d^{(m)}$ as an order-$m$ tensor and analyze its evolution. 
\begin{definition}
Let $m \geq 1$, $t \geq 0$, and $d^{(m)}(t)=(d_{\alpha}(t))_{\alpha \in S^{(m)}}\in \R^{\Gamma_m}$ be the solution of the ODE $\frac{d}{dt}d^{(m)}=-C^{(m)} d^{(m)}$, \bea{with the matrix $C^{(m)}$ discussed in Remark \ref{matrix-Cm}.} Then we define the symmetric $m$-tensor $D^{(m)}(t)=(D^{(m)}_{\alpha}(t))_{\alpha \in S^{(m)}}$ as 
\begin{equation}
\label{defDm}
D^{(m)}_{\alpha}(t):= \frac{d_{\alpha}(t)}{\gamma_{\alpha}},
\end{equation}
where $\gamma_{\alpha}:= \frac{m!}{\alpha!}$, for $\alpha=(\alpha_1,...,\alpha_d)$.
\end{definition}
For $m=1$ we of course have $D^{(1)}=d^{(1)}\bea{=(d_{\alpha})_{\alpha \in \langle d \rangle}}$.
We illustrate \bea{the above} definition for the case $m=d=2$ with $\Gamma_2=3$:
$$
d^{(2)}=\left( \begin{matrix}
d_{(2,0)}
\\
d_{(1,1)}
\\
d_{(0,2)}
\end{matrix}
\right ), \qquad 
D^{(2)}= \left( \begin{matrix}
d_{(2,0)} & \frac{d_{(1,1)}}{2}
\\
\frac{d_{(1,1)}}{2} & d_{(0,2)}
\end{matrix}
\right) \in F_2^{(2)} \subset T_2^{(2)} =\R^{2 \times 2}.
$$
Elementwise, the evolution of $D^{(m)}_{\alpha}$ easily carries over from Proposition \ref{formulaevolVm}:
\begin{prop}
\label{LEevDt}
For any $\alpha \in S^{(m)}$, the element $D^{(m)}_{\alpha}(t)$ evolves according to
\begin{equation}
\label{claimDm}
\bea{\frac{d}{dt} D_{\alpha}^{(m)}}= - \sum_{j,l=1}^d \alpha_j C_{jl} D^{(m)}_{(\alpha^{(j-)})^{(l+)}} \,.
\end{equation}
\end{prop}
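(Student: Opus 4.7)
The plan is to derive \eqref{claimDm} directly from Proposition \ref{formulaevolVm} via the substitution $d_\alpha(t) = \gamma_\alpha D^{(m)}_\alpha(t)$, i.e.\ the result is purely a bookkeeping exercise on multinomial coefficients.

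First I would write out Proposition \ref{formulaevolVm} in the form
\[
  \gamma_\alpha \dot D^{(m)}_\alpha
  = -\sum_{j,l=1}^d \mathbb{1}_{\alpha_j\ge 1}\,(\alpha^{(j-)})^{(l+)}_l \, C_{jl}\,
    \gamma_{(\alpha^{(j-)})^{(l+)}}\, D^{(m)}_{(\alpha^{(j-)})^{(l+)}},
\]
so that after dividing by $\gamma_\alpha$ the claim \eqref{claimDm} reduces to the combinatorial identity
\[
  \mathbb{1}_{\alpha_j\ge 1}\,(\alpha^{(j-)})^{(l+)}_l \, \gamma_{(\alpha^{(j-)})^{(l+)}}
  = \alpha_j\,\gamma_\alpha,
  \qquad j,l\in\langle d\rangle,\ \alpha_j\ge 1.
\]
The indicator can then be dropped in the final formula because the right-hand side $\alpha_j\gamma_\alpha$ vanishes automatically when $\alpha_j=0$.

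Next I would verify this identity by splitting into the two cases $j\ne l$ and $j=l$. For $j\ne l$, set $\beta=(\alpha^{(j-)})^{(l+)}$, so $\beta_j=\alpha_j-1$, $\beta_l=\alpha_l+1$, and $\beta_k=\alpha_k$ otherwise; then $\beta_l=\alpha_l+1$ and, using $\gamma_\alpha=m!/(\alpha_1!\cdots\alpha_d!)$,
\[
  \gamma_\beta\,\beta_l = \frac{m!\,(\alpha_l+1)}{\alpha_1!\cdots(\alpha_j-1)!\cdots(\alpha_l+1)!\cdots\alpha_d!}
  = \frac{m!\,\alpha_j}{\alpha_1!\cdots\alpha_j!\cdots\alpha_l!\cdots\alpha_d!} = \alpha_j\,\gamma_\alpha.
\]
For $j=l$, the operations $(\cdot)^{(j-)}$ and $(\cdot)^{(l+)}$ cancel (when $\alpha_j\ge 1$), so $\beta=\alpha$, $\beta_l=\alpha_j$, $\gamma_\beta=\gamma_\alpha$, and the identity holds trivially.

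There is no real obstacle — the only thing to be careful about is the case $\alpha_j=0$, where $\alpha^{(j-)}$ is defined via the cut-off $(\alpha_j-1)_+=0$ and the index $(\alpha^{(j-)})^{(l+)}$ may formally jump out of $S^{(m)}$; both the indicator in \eqref{evolutiondm} and the prefactor $\alpha_j$ in \eqref{claimDm} kill such terms, so the two expressions agree. After this check the formula \eqref{claimDm} is immediate, and symmetry of the resulting tensor $\dot D^{(m)}$ in $\alpha$ is inherited from the symmetry of $D^{(m)}$ together with the above identity, confirming that the evolution stays inside $F^{(m)}$.
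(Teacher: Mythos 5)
Your proof is correct and follows essentially the same route as the paper: substitute $d_\alpha=\gamma_\alpha D^{(m)}_\alpha$ into Proposition \ref{formulaevolVm} and reduce the claim to the combinatorial identity $\gamma_\alpha\alpha_j=\gamma_{(\alpha^{(j-)})^{(l+)}}(\alpha^{(j-)})^{(l+)}_l$ (the paper's \eqref{eqtec2}), which you verify by the same case split $j=l$ versus $j\ne l$; you merely write out the factorial cancellation explicitly where the paper leaves it as an ``observation''.
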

\begin{proof}
From \eqref{evolutiondm} we obtain by substituting the definition \eqref{defDm} on both sides:
\begin{equation}
\label{EQevDt}
\bea{\frac{d}{dt} D^{(m)}_\alpha } = -\frac{1}{\gamma_{\alpha}} \sum_{j,l=1}^d \mathbb{1}_{\alpha_j\ge 1} \gamma_{ (\alpha^{(j-)})^{(l+)}}(\alpha^{(j-)})^{(l+)}_l C_{jl} D^{(m)}_{ (\alpha^{(j-)})^{(l+)}} \,.
\end{equation}
The claim \eqref{claimDm} then follows from the relation
\begin{equation}
\label{eqtec2}
\gamma_{\alpha} \alpha_j= \gamma_{(\alpha^{(j-)})^{(l+)}} (\alpha^{(j-)})^{(l+)}_l \qquad 
\forall \alpha \in \N_0^d \mbox{ with } \alpha_j\ge 1 \,, 
\end{equation}
which can be obtained as follows: It is trivial for $l=j$, and for $l\ne j$ it follows from the definition of $\gamma_{\alpha}$
and from the observation that $(\alpha^{(j-)})^{(l+)}_l=\alpha_l+1$ and $(\alpha^{(j-)})^{(l+)}_j=\alpha_j-1$.
\end{proof}
The advantage of this new structure consists in two facts:
\begin{itemize}
\item The Frobenius norm $\|D^{(m)}(t)\|_{\mathcal{F}}$ is proportional (uniformly in $t$) to the Euclidean norm 
$\left\|\tilde{d}^{(m)}(t)\right\|_2$ for which we want to prove a decay estimate like  \eqref{d1}.
\item The rank-1 decomposition of $D^{(m)}(t)$ is compatible with the Fokker-Planck flow in $V^{(m)}$, \bea{i.e.}, for each symmetric tensor $D^{(m)}(0)$ (considered as an initial condition in $V^{(m)}$), we can decompose $D^{(m)}(t)$ as a sum of order-$m$ outer products of vectors that are solutions of the ODE $\frac{d}{dt} v(t)=-Cv(t)$.
\end{itemize}

Concerning the first property we have
\begin{prop}\label{prop-FrobD}
Given $m \geq 1$, then 
\begin{equation}
\label{normofDmlink}
\left\|D^{(m)}(t)\right\|_{\mathcal{F}}=\frac{1}{\sqrt{m!}} \left\|\tilde{d}^{(m)}(t)\right\|_2 \,, \quad \forall t \geq 0 \,.
\end{equation}
\end{prop}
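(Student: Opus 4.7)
\bigskip

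\noindent\textbf{Proof plan.} The identity is a direct bookkeeping computation. The main idea is to unfold the Frobenius norm over $\langle d \rangle^m$ into a sum over $S^{(m)}$, exploiting the symmetry of $D^{(m)}$ together with the class structure described in Remark \ref{remark2}. The only potentially delicate step is making sure that the combinatorial weights $\gamma_\alpha = m!/\alpha!$ match up correctly with the factors $\alpha!$ coming from $\|g_\alpha\|_{\mathcal{H}}^2$.

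First I would exploit that $D^{(m)}$ is symmetric and invoke Remark \ref{remark2}: the quotient $\langle d\rangle^m/\varphi$ is in bijection with $S^{(m)}$, and each class $[I]_\varphi$ with $\varphi(I)=\alpha$ has exactly $\gamma_\alpha$ elements, on which $D^{(m)}$ takes the common value $D^{(m)}_\alpha$. Hence
\begin{equation*}
\left\|D^{(m)}(t)\right\|_{\mathcal{F}}^2 \;=\; \sum_{I \in \langle d\rangle^m} (D^{(m)}_I(t))^2 \;=\; \sum_{\alpha \in S^{(m)}} \gamma_\alpha\, (D^{(m)}_\alpha(t))^2.
\end{equation*}

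Next I would substitute the definition $D^{(m)}_\alpha(t)=d_\alpha(t)/\gamma_\alpha$ from \eqref{defDm}, which gives
\begin{equation*}
\left\|D^{(m)}(t)\right\|_{\mathcal{F}}^2 \;=\; \sum_{\alpha \in S^{(m)}} \frac{d_\alpha(t)^2}{\gamma_\alpha} \;=\; \frac{1}{m!}\sum_{\alpha \in S^{(m)}} \alpha!\, d_\alpha(t)^2,
\end{equation*}
using $\gamma_\alpha=m!/\alpha!$. On the other hand, by Lemma \ref{normofgalpha} together with the relation $\tilde d_\alpha(t) = \|g_\alpha\|_{\mathcal{H}}\, d_\alpha(t) = \sqrt{\alpha!}\, d_\alpha(t)$ noted after \eqref{decompoff}, one has
\begin{equation*}
\left\|\tilde{d}^{(m)}(t)\right\|_2^2 \;=\; \sum_{\alpha \in S^{(m)}} \tilde d_\alpha(t)^2 \;=\; \sum_{\alpha \in S^{(m)}} \alpha!\, d_\alpha(t)^2.
\end{equation*}
Comparing the two displays yields $\left\|D^{(m)}(t)\right\|_{\mathcal{F}}^2 = \tfrac{1}{m!}\left\|\tilde{d}^{(m)}(t)\right\|_2^2$, which is \eqref{normofDmlink}. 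No step here requires estimates or analysis — it is purely a counting argument, so I expect no real obstacle beyond carefully tracking the two combinatorial factors $\alpha!$ and $\gamma_\alpha$.
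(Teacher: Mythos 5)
Your computation is correct and follows the same line as the paper: unfold the Frobenius norm over $\langle d\rangle^m$ via Remark \ref{remark2}, substitute $D^{(m)}_\alpha = d_\alpha/\gamma_\alpha$, and match against $\tilde d_\alpha = \sqrt{\alpha!}\,d_\alpha$ using $\gamma_\alpha\alpha! = m!$. The only cosmetic difference is that you compare the two sums side by side whereas the paper chains the equalities in one display; the ingredients and bookkeeping are identical.
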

\begin{proof}
We compute, using Remark \ref{remark2},
$$
\|D^{(m)}(t)\|^2_{\mathcal{F}}= \sum_{I \in \langle d\rangle^m} D^{(m)}_I(t)^2
= \sum_{\alpha \in S^{(m)}} D^{(m)}_{\alpha}(t)^2 \gamma_\alpha,
$$
where we used the identification $D_{\alpha}^{(m)}(t):=D_{I}^{(m)}(t)$ if $\alpha=\vphi(I)$ as well as $\left|[I]_\vphi\right| = \gamma_\alpha$.

Then, using the definition of $D^{(m)}(t)$, $\tilde{d}_{\alpha}(t)=\|g_{\alpha}\|_{\mathcal{H}} d_{\alpha}(t)$, and Lemma \ref{normofgalpha}, we have
\begin{align*}
\label{Dmdmnorm}
\left\|D^{(m)}(t)\right\|^2_{\mathcal{F}}=& \sum_{\alpha \in S^{(m)}} \frac{d_{\alpha}(t)^2}{\gamma_{\alpha} }= 
\sum_{\alpha \in S^{(m)}} \frac{\tilde{d}_{\alpha}(t)^2}{\gamma_{\alpha}\|g_{\alpha}\|^2_{\mathcal{H}}}= \frac{1}{m!} \sum_{\alpha \in S^{(m)}}\tilde{d}_{\alpha}(t)^2 \\
=& \frac{1}{m!} \left\|\tilde{d}^{(m)}(t)\right\|_2^2,
\end{align*}
concluding the proof.
\end{proof}

Concerning the second property we find that the rank-1 decomposition of $D^{(m)}(t)$ commutes with the time evolution by the Fokker-Planck equation:

\begin{theorem}
\label{evolutionDmt}
Let $m\geq 1$ be fixed and let $D^{(m)} \in F^{(m)}$, having the rank-1 decomposition $D^{(m)}= \sum_{k=1}^s \lambda_k v_k^{\otimes  m}$ with symmetric rank $s$, constants $\lambda_1,...,\lambda_s \in \R$ and $s$ vectors $v_k:=(v^{(k)}_j)_{j =1}^d \in \R^d$.
Then, $D^{(m)}(t)$, $t>0$, the solution to \eqref{claimDm} with initial condition $D^{(m)}(0)=D^{(m)}$ has the decomposition
\begin{equation}
\label{EQdecDt}
D^{(m)}(t)= \sum_{k=1}^s \lambda_k [v_k(t)]^{\otimes m},
\end{equation}
where all vectors $v_k(t)\in \R^d$, $k=1,...,s$ satisfy the ODE $\frac{d}{dt}v_k(t)=-Cv_k(t)$ with initial condition $v_k(0)=v_k$. Moreover, $D^{(m)}(t)$, $t>0$ has the constant-in-$t$ symmetric rank $s$.
\end{theorem}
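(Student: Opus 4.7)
The plan is to exploit linearity of the ODE \eqref{claimDm} together with uniqueness of its solution. I would introduce the ansatz
\[
\widetilde D(t) := \sum_{k=1}^{s}\lambda_k (e^{-Ct}v_k)^{\otimes m},
\]
which satisfies $\widetilde D(0)=D^{(m)}$ by the given rank-$s$ decomposition. The task then reduces to checking that $\widetilde D$ solves \eqref{claimDm}; uniqueness of solutions to the linear ODE then identifies $\widetilde D(t)$ with $D^{(m)}(t)$ for all $t\ge 0$, yielding \eqref{EQdecDt}.

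By linearity it suffices to verify the ODE on a single rank-$1$ summand $v(t)^{\otimes m}$ with $\dot v=-Cv$. The basic observation is that on a rank-$1$ symmetric tensor the $\alpha$-component collapses to a monomial: $(v^{\otimes m})_\alpha = \prod_{p=1}^{d} v_p^{\alpha_p}$, since $(v^{\otimes m})_I=v_{i_1}\cdots v_{i_m}$ depends on $I\in\langle d\rangle^m$ only through $\varphi(I)=\alpha$. Differentiating with the product rule and substituting $\dot v_j = -\sum_{l}C_{jl}v_l$ produces
\[
\frac{d}{dt}(v(t)^{\otimes m})_\alpha = -\sum_{j:\,\alpha_j\ge 1}\sum_{l=1}^{d}\alpha_j C_{jl}\, v_l(t)\, v_j(t)^{\alpha_j-1}\prod_{p\ne j} v_p(t)^{\alpha_p}.
\]
The key index-bookkeeping step is to recognise that the exponent profile on the right --- $\alpha_j-1$ at position $j$, $\alpha_l+1$ at position $l$, and $\alpha_p$ elsewhere --- is precisely $(\alpha^{(j-)})^{(l+)}$, so the remaining monomial factor equals $(v^{\otimes m})_{(\alpha^{(j-)})^{(l+)}}$. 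This reproduces the right-hand side of \eqref{claimDm} term by term; weighting by $\lambda_k$ and summing in $k$ gives the ODE for $\widetilde D$.

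For constancy of the symmetric rank I would use reversibility of the drift ODE. If at some $t_0>0$ one had $D^{(m)}(t_0) = \sum_{k=1}^{s'}\mu_k w_k^{\otimes m}$ with $s'<s$, running the same evolution backward in time from $t_0$ to $0$ (equivalently, applying $v\mapsto e^{Ct_0}v$ componentwise to each $w_k$) would yield
\[
D^{(m)} = D^{(m)}(0) = \sum_{k=1}^{s'}\mu_k (e^{Ct_0}w_k)^{\otimes m},
\]
contradicting the minimality of $s$ in Proposition \ref{TEOdec}. Hence the symmetric rank of $D^{(m)}(t)$ is exactly $s$ for every $t\ge 0$.

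I expect the main obstacle to be the combinatorial bookkeeping in the product-rule computation: one has to see that the factor $\alpha_j$ in \eqref{claimDm} emerges from differentiating $v_j^{\alpha_j}$, while the index shift $(\alpha^{(j-)})^{(l+)}$ records the exchange of one $v_j$-factor for one $v_l$-factor through the drift coefficient $C_{jl}$. Once this is unpacked, the structural content of the theorem is simply that \eqref{claimDm} is the $m$-fold symmetric-tensor lift of $\dot v=-Cv$, and the Leibniz rule preserves the sum-of-rank-$1$ form.
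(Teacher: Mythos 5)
Your proof is correct and follows essentially the same path as the paper's: compute $\frac{d}{dt}(v(t)^{\otimes m})_\alpha$ via the Leibniz rule on the monomial $\prod_p v_p^{\alpha_p}$, substitute $\dot v_j = -\sum_l C_{jl}v_l$, identify the index shift $(\alpha^{(j-)})^{(l+)}$, then extend by linearity and invoke uniqueness; the rank-constancy step via time reversal is also the paper's argument (you just spell it out a bit more explicitly). The only cosmetic difference is that you restrict the outer sum to $j$ with $\alpha_j\geq 1$, whereas the paper sums over all $j$ --- the two agree because the factor $\alpha_j$ kills the extra terms.
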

\begin{proof}
We shall compute the evolution of the symmetric $m$-tensor $A(t):=\sum_{k=1}^s \lambda_k [v_k(t)]^{\otimes m}$, using that $\frac{d}{dt}v_k(t)=-Cv_k(t)$. To this end we compute first the derivative $\frac{d}{dt} (w(t)^{\otimes m})_{\alpha}$ if the vector
$w (t)=(w_1(t),...,w_d(t))^T\in \R^d$ satisfies the ODE \bea{$\frac{d}{dt}w(t)=-Cw(t)$}.

Given $\alpha=(\alpha_1,...,\alpha_d)\in S^{(m)}$, we have
\begin{align*}
\frac{d}{dt}(w(t)^{\otimes m})_{\alpha}&=\frac{d}{dt}\prod_{j=1}^d w_j(t)^{\alpha_j}
\\
&= \sum_{j=1}^d \alpha_j \left (w_1(t)^{\alpha_1}\cdots w_j(t)^{\alpha_j -1} \cdots w_d(t)^{\alpha_d} \right ) \left(\frac{d}{dt}w_j(t)\right )
\\
&=-\sum_{j=1}^d \alpha_j \left (w_1(t)^{\alpha_1}\cdots w_j(t)^{\alpha_j -1} \cdots w_d(t)^{\alpha_d} \right ) \sum_{l=1}^d C_{jl}w_l(t)
\\
&=-\sum_{j,l=1}^d \alpha_j C_{jl} \left (w_1(t)^{\alpha_1}\cdots w_j(t)^{\alpha_j -1} \cdots w_l(t)^{\alpha_l+1} \cdots w_d(t)^{\alpha_d} \right )
\\
&=- \sum_{j,l=1}^d \alpha_j C_{jl}\left(w(t)^{\otimes m} \right)_{(\alpha^{(j-)})^{(l+)}},
\end{align*}
and hence, by linearity
\begin{equation}
\label{EQderA}
\frac{d}{dt}\left (A(t) \right )_{\alpha}=- \sum_{j,l=1}^d \alpha_j C_{jl}  \left ( A(t) \right ) _{(\alpha^{(j-)})^{(l+)}}.
\end{equation}
This ODE equals the evolution equation \eqref{claimDm} for $D^{(m)}$, and hence $A(t)=D^{(m)}(t)$ follows.
\\
Next we consider the symmetric rank of $D^{(m)}(t)$, $t>0$. If it would be smaller than $s$, a reversed evolution to $t=0$ would lead to a contradiction to the symmetric rank of $D^{(m)}$.
\end{proof}
This theorem allows to reduce the evolution of the tensors $D^{(m)}(t)$ to the ODE for the vectors $v_k(t)$. This will be a key ingredient for proving sharp decay estimates of $D^{(m)}$ in the next section.
Moreover it provides a compact formula for the evolution of $D^{(m)}(t)$.
\begin{corollary}
Let $m\geq 1$ be fixed. Then, $D^{(m)}(t)$, t>0, the solution to \eqref{claimDm} follows the evolution
\begin{equation}
\frac{d}{dt}D^{(m)}(t)=-m \ \mathrm{Sym}{(C \odot D^{(m)}(t))}, \quad t>0.
\end{equation}
\end{corollary}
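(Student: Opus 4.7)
The plan is to reduce the problem to a single rank-1 summand via Theorem \ref{evolutionDmt}. That theorem gives $D^{(m)}(t) = \sum_{k=1}^s \lambda_k v_k(t)^{\otimes m}$ with $\dot v_k = -Cv_k$, and since both $\mathrm{Sym}$ and the multilinear product $C\,\odot\,\cdot$ are linear, it suffices to prove the identity
$$
    \frac{d}{dt}\bigl[v(t)^{\otimes m}\bigr] = -m\,\mathrm{Sym}\bigl(C \odot v(t)^{\otimes m}\bigr)
$$
for an arbitrary vector $v(t) \in \R^d$ satisfying the drift ODE $\dot v = -Cv$. The full statement for $D^{(m)}(t)$ then follows immediately by summing over $k$ with weights $\lambda_k$.

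To establish the single-vector identity, I would apply the Leibniz rule to the outer product,
$$
    \frac{d}{dt}\bigl[v^{\otimes m}\bigr] = \sum_{k=1}^m v^{\otimes(k-1)} \otimes \dot v \otimes v^{\otimes(m-k)} = -\sum_{k=1}^m v^{\otimes(k-1)} \otimes (Cv) \otimes v^{\otimes(m-k)},
$$
and then observe that each summand is a permutation of $(Cv)\otimes v^{\otimes(m-1)}$, which by the definition of $\odot$ (acting on the first index) coincides with $C \odot v^{\otimes m}$. The $m$ summands exhaust the $m$ distinct positions at which $Cv$ can sit, and among the $m!$ permutations in the definition of $\mathrm{Sym}$ exactly $(m-1)!$ fix the position of $Cv$ while reshuffling the remaining $v$'s (which leaves the tensor unchanged). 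Hence
$$
    \mathrm{Sym}\bigl(C \odot v^{\otimes m}\bigr) = \frac{(m-1)!}{m!}\sum_{k=1}^m v^{\otimes(k-1)} \otimes (Cv) \otimes v^{\otimes(m-k)} = \frac{1}{m}\sum_{k=1}^m v^{\otimes(k-1)} \otimes (Cv) \otimes v^{\otimes(m-k)},
$$
and the required identity drops out.

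The only mildly delicate point is tracking the combinatorial factor $m$; once one recognises that the $(m-1)!$ \textit{stabilising} permutations collapse in the symmetrisation while the $m$ \textit{positional} ones survive, the constant is forced. As a sanity check one can reprove the corollary purely componentwise from \eqref{claimDm}: writing $[\mathrm{Sym}(C\odot D^{(m)})]_I$ with $\alpha = \vphi(I)$, grouping the $m!$ permutations of $I$ by which original entry is moved into the first slot, and using the symmetry of $D^{(m)}$ together with the counting $\left|[I]_\vphi\right| = \gamma_\alpha$ from Remark \ref{remark2}, yields exactly $\tfrac{1}{m}\sum_{j,l}\alpha_j C_{jl} D^{(m)}_{(\alpha^{(j-)})^{(l+)}}$, which matches $-\tfrac{1}{m}\dot D^{(m)}_\alpha$. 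The rank-1 route is however more transparent in light of Theorem \ref{evolutionDmt}.
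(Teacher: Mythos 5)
Your proof is correct and takes essentially the same route as the paper: reduce by linearity of $\mathrm{Sym}$ and $C\odot\,\cdot$ to a single rank-1 summand, differentiate $v(t)^{\otimes m}$ by the Leibniz rule, and recognise the resulting sum as $m\,\mathrm{Sym}(C\odot v^{\otimes m})$. The only difference is that you supply, via the $(m-1)!$-stabiliser count, the combinatorial identity $\mathrm{Sym}\bigl(w\otimes v^{\otimes(m-1)}\bigr)=\tfrac{1}{m}\sum_{k=0}^{m-1}v^{\otimes k}\otimes w\otimes v^{\otimes(m-k-1)}$, which the paper records but leaves as a ``straightforward computation.''
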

\begin{proof}
We shall use the decomposition \eqref{EQdecDt} for $D^{(m)}(t)$. First, we compute the evolution of $[v(t)]^{\otimes m}$, if $\frac{d}{dt}v(t)=-Cv(t)$:
\begin{align*}
\frac{d}{dt}([v(t)]^{\otimes m})&=-\sum_{k=0}^{m-1} [v(t)]^{\otimes k}\otimes ((Cv(t)) \otimes [v(t)]^{\otimes (m-k-1)}\\
&=-m \ \mathrm{Sym}{ \left( (Cv(t)) \otimes [v(t)]^{\otimes(m-1)} \right) }.
\end{align*}
In the last equality we have used, with $w:=Cv(t)$, the general formula 
$$
\mathrm{Sym}{(w \otimes v^{\otimes(m-1)})}=\frac{1}{m} \sum_{k=0}^{m-1}(v^{\otimes k} \otimes w \otimes v^{\otimes (m-k-1)}), \quad \forall v,w \in \R^{d}
$$
that can be proven with a straightforward computation.
By using the linearity of $\mathrm{Sym}$ in $T^{(m)}$, we obtain
\begin{align*}
\frac{d}{dt}D^{(m)}(t)&=\frac{d}{dt} \sum_{k=1}^s \lambda_k [v_k(t)]^{\otimes m}=-m \  \left ( \sum_{k=1}^s \lambda_k  \mathrm{Sym}{\left( (Cv_k(t)) \otimes [v_k(t)]^{\otimes(m-1)} \right)}  \right )
\\
&=-m \ \mathrm{Sym}{ \left( \sum_{k=1}^s \lambda_k (Cv_k(t)) \otimes [v_k(t)]^{\otimes(m-1)}  \right)}=-m \  \mathrm{Sym} (C \odot D^{(m)}(t)).
\end{align*}
\end{proof}

\section{Decay of the subspace evolution in $V^{(m)}$}
\label{sec:fifth}
First we shall rewrite our main decay result, Theorem \ref{maintheo} in terms of tensors for all subspaces $V^{(m)}$. We recall $h(t):=\left\|e^{-Ct}\right\|_{\mathcal{B}(\R^d)}$, which satisfies
\begin{equation}
\label{h1}
h(t) \leq 1 \,, \qquad t \geq 0 \,.
\end{equation}
This follows from 
\begin{equation*}
\frac{d}{dt}\left\|e^{-Ct}x_0\right\|_2^2=-2\langle C_S x(t), x(t) \rangle  \leq 0 \,, \qquad  x_0 \in \R^d, \, 
\end{equation*}
\bea{for $x(t)=e^{-Ct}x_0.$}
\bea{
Using Theorem \ref{shortTimeODE}, the statement of \eqref{h1} can be improved immediately to 
\begin{equation}\label{h1'}
  h(t)<1,\qquad t>0.
\end{equation}
}
We have shown in \eqref{d1} that the inequality \eqref{equivestimate1}, see below, holds with $m=1$, since $D^{(1)}(t)=d^{(1)}(t)$ satisfies the evolution $\dot d^{(1)}=-C d^{(1)}$. Next we extend the estimate \eqref{equivestimate1} to general $m \geq1$. To this end we will show in the next theorem that the propagator norm in each $V^{(m)}$ is the $m$-th power of the propagator norm of the ODE $\dot x=-C x$. This will be used to derive the decay estimates for 
$\left\|e^{-Lt}\right\|_{\bea{\mathcal{B}(V_0^{\perp})}}$.
\begin{theorem}
\label{TEOnormm}
For each $m \geq 1$, $D^{(m)}(0) \in F^{(m)}$, and $D^{(m)}(t)$ defined as in \eqref{defDm}, the following estimate holds:
\begin{equation}
\label{EQTEOnormm}
\left\|D^{(m)}(t)\right\|_{\mathcal{F}} \leq h(t)^m \left\|D^{(m)}(0)\right\|_{\mathcal{F}} \,, \qquad t \geq 0 \,.
\end{equation}
Moreover,
\begin{equation}
\label{supDm}
\sup_{0 \neq D^{(m)}(0) \in F^{(m)}} \frac{\|D^{(m)}(t)\|_{\mathcal{F}}}{\|D^{(m)}(0)\|_{\mathcal{F}}}=h(t)^m.
\end{equation}
\end{theorem}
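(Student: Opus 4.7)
The plan is to reduce everything to the multilinear matrix action of $e^{-Ct}$ on symmetric tensors, making crucial use of Theorem \ref{evolutionDmt} and the compatibility with the rank-1 decomposition.

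First, I would combine Theorem \ref{evolutionDmt} with Lemma \ref{fact1} to recognize that the $V^{(m)}$-flow is exactly an $m$-fold multilinear matrix multiplication by $e^{-Ct}$. Indeed, if we fix a rank-1 decomposition $D^{(m)}(0)=\sum_{k=1}^s \lambda_k v_k^{\otimes m}$ and let $v_k(t):=e^{-Ct}v_k$, Theorem \ref{evolutionDmt} gives $D^{(m)}(t)=\sum_{k=1}^s \lambda_k [v_k(t)]^{\otimes m}=\sum_{k=1}^s \lambda_k (e^{-Ct}v_k)^{\otimes m}$, which by Lemma \ref{fact1} equals
\begin{equation*}
D^{(m)}(t) = e^{-Ct}\odot^m D^{(m)}(0).
\end{equation*}
This identity is the heart of the argument; it is independent of the particular rank-1 decomposition chosen, since both sides are determined by $D^{(m)}(0)$ and the flow.

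With this representation in hand, the upper bound \eqref{EQTEOnormm} is immediate from Lemma \ref{LEM4} applied with $B=e^{-Ct}$ and $k=m$:
\begin{equation*}
\|D^{(m)}(t)\|_{\mathcal{F}} = \|e^{-Ct}\odot^m D^{(m)}(0)\|_{\mathcal{F}} \le \|e^{-Ct}\|^m\,\|D^{(m)}(0)\|_{\mathcal{F}} = h(t)^m\,\|D^{(m)}(0)\|_{\mathcal{F}}.
\end{equation*}

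For the sharpness statement \eqref{supDm}, I would exhibit a symmetric initial tensor that saturates the bound. Fix $t\ge 0$ and pick a unit vector $v\in\R^d$ realizing the spectral norm, i.e.\ $\|e^{-Ct}v\|_2=\|e^{-Ct}\|=h(t)$ (such a real $v$ exists because $\|e^{-Ct}\|^2$ is the largest eigenvalue of the real symmetric matrix $(e^{-Ct})^T e^{-Ct}$). Take $D^{(m)}(0):=v^{\otimes m}\in F^{(m)}$. Corollary \ref{corfact2} gives $\|v^{\otimes m}\|_{\mathcal{F}}=\|v\|_2^m=1$, while by Lemma \ref{fact1} we have $D^{(m)}(t)=(e^{-Ct}v)^{\otimes m}$, whence $\|D^{(m)}(t)\|_{\mathcal{F}}=\|e^{-Ct}v\|_2^m=h(t)^m$. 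Combined with the upper bound, this proves equality in \eqref{supDm}.

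There is no serious obstacle: all the work has been done in the preparatory sections. The only subtle point is the existence of a \emph{real} vector attaining $\|e^{-Ct}\|$, but this is standard since $e^{-Ct}$ is real. Once the identity $D^{(m)}(t)=e^{-Ct}\odot^m D^{(m)}(0)$ is recognized, the statement amounts to computing the operator norm of the $m$-fold multilinear multiplication by a matrix on the space of symmetric tensors, which collapses to the scalar identity $\|B\odot^m\|_{\mathcal B(F^{(m)})}=\|B\|^m$.
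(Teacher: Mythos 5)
Your proposal is correct and follows exactly the same route as the paper's proof: represent $D^{(m)}(t)=e^{-Ct}\odot^m D^{(m)}(0)$ via Theorem \ref{evolutionDmt} and Lemma \ref{fact1}, bound via Lemma \ref{LEM4}, and saturate with rank-one initial data $v^{\otimes m}$. The only cosmetic difference is that you fix a maximizing unit vector $v$ while the paper takes the supremum over $v\in\R^d$; both amount to the same observation that a real maximizer exists.
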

\begin{proof}
Given the initial condition $D^{(m)}(0) \in F^{(m)}$, Theorem \ref{evolutionDmt} provides its rank-1 decomposition as
\begin{equation}
\label{EQ1641}
D^{(m)}(t)=\sum_{k=1}^s \lambda_k [v_k(t)]^{\otimes m}= \sum_{k=1}^s \lambda_k [e^{-Ct}v_k]^{\otimes m}=e^{-Ct} \odot^m D^{(m)}(0), \quad \forall t \geq 0,
\end{equation}
with $v_k(t)=e^{-Ct}v_k$, for $k=1,...,s$, where we have used Lemma \ref{fact1} in the last equality.
Using \eqref{EQLEM4} then yields:
\begin{equation}\label{D-est}
\|D^{(m)}(t) \|_{\mathcal{F}}=\|e^{-Ct} \odot^m D^{(m)}(0)\|_{\mathcal{F}}\leq \|e^{-Ct}\|^m \|D^{(m)}(0)\|_{\mathcal{F}},
\end{equation}
proving \eqref{EQTEOnormm}.

In order to prove the equality \eqref{supDm} we choose initial data of the form $D^{(m)}(0):=v^{\otimes m}$, $v \in \R^d$. In this case the Frobenius norm factorizes, i.e. \linebreak
$\|D^{(m)}(0)\|_{\mathcal{F}}=\|v\|_2^m$ and 
$$
\| D^{(m)}(t)\|_{\mathcal{F}}=\| (e^{-Ct}v)^{\otimes m }\|_{\mathcal{F}}=\|e^{-Ct}v\|^m_2
$$
We conclude by observing that
$$
\sup_{0 \neq v \in \R^d} \frac{\|e^{-Ct}v\|_2^m}{\|v\|_2^m}=h(t)^m.
$$
\end{proof}

The key step in the above proof is to write the evolution of the tensor $D^{(m)}(t)$ as in \eqref{EQ1641}, which allows for the simple estimate \eqref{D-est}. In contrast, using the rank-1 decomposition in $\|D^{(m)}(t)\|_{\mathcal{F}}^2$ would not be helpful, since the vectors $v_k(t)$ are in general not orthogonal.

We conclude this chapter with the proof of our main result, Theorem \ref{maintheo}, by using Theorem \ref{TEOnormm}.
\begin{proof}[Proof of Theorem \ref{maintheo}]
The first step consists in proving the inequality 
\begin{equation}
\label{FinalIn}
\left\|e^{-Lt}\right\|_{\bea{\mathcal{B}(V_0^{\perp})}} \leq h(t), \forall t \geq 0.
\end{equation}
We can derive the estimate \eqref{FinalIn} from the same ones that hold for the tensors $D^{(m)}(t)$ at each level $m$. More precisely, \eqref{FinalIn} holds if
\begin{equation}
\label{equivestimate1}
\|D^{(m)}(t)\|_{\mathcal{F} }\leq h(t) \|D^{(m)}(0)\|_{\mathcal{F}}, \qquad  t \geq 0, \quad  D^{(m)}(0) \in F^{(m)}, \quad m \geq 1,
\end{equation}
where $D^{(m)}(t)$ is defined as in \eqref{defDm}.
Indeed,
\begin{equation}
\label{linkfandD}
\|f(t)-f_{\infty}\|^2_{\mathcal{H}}= \sum_{m \geq 1}\| \Pi_m  f(t)\|_{\mathcal{H}}^2=\sum_{m \geq 1}\|\tilde{d}^{(m)}(t)\|_2^2=\sum_{m \geq 1} m ! \ \| D^{(m)}(t)\|_{\mathcal{F}}^2, \quad t \geq 0,
\end{equation}
where we have used the orthonormal decomposition of $f(t)$, formulas \eqref{decompoff}, \eqref{normofDmlink},  and that the coefficient $d_0 (t) \equiv 1$ (with the index $0\in \N_0^d$) is constant in time, since $Lg_0=0$ and the normalization $\int_{\R^d}f_0 dx =1$.
Let us assume \eqref{equivestimate1}. Then, 
\begin{align*}
\|f(t)-f_{\infty}\|^2_{\mathcal{H}} =& \sum_{m \geq 1} m !  \ \|D^{(m)}(t)\|^2_{\mathcal{F}} \leq h(t)^2 \sum_{m \geq 1} m ! \  \|D^{(m)}(0)\|_{\mathcal{F}}^2
\\
=& h(t)^2\|f_0-f_{\infty}\|^2_{\mathcal{H}},
\end{align*}
proving \eqref{FinalIn}.

Next, the proof of \eqref{equivestimate1} is a direct consequence of Theorem \ref{TEOnormm} and $h(t) \leq 1$, yielding
$$
\|D^{(m)}(t)\|_{\mathcal{F}} \leq (h(t) )^m \|D^{(m)}(0)\|_{\mathcal{F}} \leq h(t) \|D^{(m)}(0)\|_{\mathcal{F}}.
$$

Now that \eqref{FinalIn} has been proved, we need to show that it is actually an equality, in order to conclude the proof of \eqref{mainequality}. 
For this purpose, we observe that for $m=1$, $D^{(1)} \in \R^d$ evolves according to the ODE $\dot{x}=-Cx$ (see \eqref{d1ev}). Then, it is sufficient to choose an initial datum $f_0 \in V^{(1)}$ to achieve the equality, concluding the proof.
\end{proof}

\bea{
\begin{remark}
Using \eqref{h1'}, the decay estimates \eqref{EQTEOnormm} show that the higher subspace components $D^{(m)}(t)$ decay, for each fixed $t>0$, with a rate that increases exponentially in $m$. Due to the subspace decomposition \eqref{linkfandD}, 
this enhanced decay of the higher subspace components translates into a parabolic-type regularization of the FP-semigroup for $t>0$, cp.\ to Proposition \ref{RegWitha}.
\end{remark}
}

\section{Second quantization}
In this last section we are going to write the FP-operator $L$ in \eqref{NormalizedFPwithC} in terms of the second quantization formalism. This ``language'' was introduced in quantum mechanics in order to simplify the description and the analysis of quantum many-body systems. The assumption of this construction is the  indistinguishability of particles in quantum mechanics. Indeed, according to the statistics of particles, the exchange of two of them does not affect the status of the configuration, possibly up to a sign. Since we are dealing with symmetric tensors, we are going to consider the case in which the sign does not change, i.e. the wave function is identical after this exchange. This is the case of particles that are called \textit{bosons}. 

The functional spaces of second quantization are the so-called \textit{Fock spaces}, that we are going to define in this section. When a single Hilbert space $H$ describes a single particle, then   it is convenient to build an infinite sum of symmetric tensorization of $H$ in order to represent a system of (up to) infinitely many indistinguishable particles, i.e. the Fock space over $H$.  

In the first part of this section the definitions of the Boson Fock space and second quantization operators are given. These constructions will be needed in order to write the FP-operator $L$ as the second quantization of its corresponding drift matrix $C$. This will be the main result of the second part of this section as an application of well known results in the literature.

\begin{subsection}{The Boson Fock space}
In the next definition we will use the notion of \text{$m$-fold tensor product} over a Hilbert space $H$. This is a generalization of the space of order-$m$ hypermatrices $T^{(m)}$ defined in $\S 5$, where the Hilbert space was the finite dimensional space $\R^d$. In the quantum mechanics literature, the role of the Hilbert space is often played by $L^2(\R^3; \mathbb{C})$, in order to describe the wave function of a quantum particle. For a more complete explanation of tensor products of Hilbert spaces and Fock \linebreak spaces we refer to \S\RomanNumeralCaps{2}.4  \ in \cite{RS1}.

In the literature, Fock spaces are mostly considered for Hilbert spaces over the field $\C$. But since the FP-equations \eqref{FP} and \eqref{NormalizedFPwithC} are posed on $\R^d$ (and not over $\C^d$), we shall use here only real valued Fock spaces. Moreover, these FP-equations are considered here only for real valued initial data, and hence real valued solutions.
\begin{definition}
Let $H$ be a Hilbert space and denote by $H^{(m)}:=H \otimes H\otimes \cdots  \otimes H$ ($m$ times), for any \bea{$m \geq 1$}. Set ${H}^{(0)}:=\C$ (or $\R$) and define the \textit{Fock space over $H$} as the completed direct sum
\begin{equation}
\label{defFock}
\mathcal{F}(H)= \bigoplus_{m=0}^{\infty} H^{(m)}.
\end{equation}
Then, an element $\psi \in \mathcal{F}(H)$ can be represented as a sequence $\psi=\{ \psi^{(m)} \}_{m=0}^{\infty}$, where $\psi^{(0)} \in \C$ (or $\R$), $\psi^{(m)} \in H^{(m)}, \forall \bea{m \geq 1}$, so that 
\begin{equation}
\|\psi\|_{\mathcal{F}(H)}:=\sqrt{\sum_{m=0}^{\infty} \|\psi^{(m)}\|^2_{H^{(m)}} }< \infty.
\end{equation}
Here $\|\cdot\|_{H^{(m)}}$ denotes the norm induced by the inner product in $H^{(m)}$ (see Proposition $1$, \S \RomanNumeralCaps{2}.4  in \cite{RS1}). 
\end{definition}

As we anticipated, we will rather work with a subspace of $\Fock$, the so-called Boson Fock space that we are going to define. First we need to define the $m$-fold symmetric tensor product of $H$ as follows:

Let $\mathcal{P}_m$ be the permutation group on $m$ elements and let $\{\phi_k\}$; $k=1,...,\dim H$, be a basis for $H$. For each $\sigma \in \mathcal{P}_m$, we define its corresponding operator (we will still denote it with $\sigma$) acting on basis elements of $H^{(m)}$ by
\begin{equation}
\label{7star}
\sigma( \phi_{k_1} \otimes \phi_{k_2} \otimes \cdots \otimes \phi_{k_m}):= \phi_{\sigma(k_1)} \otimes \phi_{\sigma{(k_2)}} \otimes  \cdots \otimes \phi_{\sigma(k_m)}.
\end{equation}
Then $\sigma$ extends by linearity to a bounded operator on $H^{(m)}$. 
With the previous definition \eqref{7star} we can define the operator $S_m:=\frac{1}{m!}\sum_{\sigma \in \mathcal{P}_m} \sigma$ that acts on $H^{(m)}$. Its range $S_m H^{(m)}$ is called the \textit{$m$-fold symmetric tensor product of $H$}.  Let us see examples of $S_m H^{(m)}$.
\begin{example}
Let us consider first the case $H=L^2(\R)$ and $H^{(m)}=L^2(\R) \otimes \cdots \otimes L^2(\R)$. Since $H^{(m)}$ is isomorphic to $L^2(\R^m)$, it follows that an element $\psi^{(m)} \in S_m H^{(m)}$ is a function $\psi^{(m)}(x_1,...,x_m)$ in $L^2(\R^m)$ left invariant under any permutation of the variables. It is used in quantum mechanics to describe the quantum states of $m$ particles that are not distinguishable.

For our purposes, we will deal with $H=\R^d$. In this case it is easy to check that $S_m H^{(m)}$ corresponds to the space of symmetric $m$-tensors $F^{(m)}$ that we defined in $\S 5$, equipped with the Frobenius norm. $\hfill  \qed$
\end{example}
\begin{definition}
The subspace of $\mathcal{F}(H),$
\begin{equation}
\mathcal{F}_s(H):= \bigoplus_{m=0}^{\infty} S_m H^{(m)}
\end{equation}
 is called the \textit{symmetric Fock space over} $H$ or the \textit{Boson Fock space over} $H$.
\end{definition}
\end{subsection}

\begin{subsection}{The second quantization operator}
In order to write the \bea{FP-propa\-gator} in terms of the second quantization formalism, we need to define the second quantization operators (see \S\RomanNumeralCaps{1}.4 in \cite{S} and \S \RomanNumeralCaps{10}.7 in \cite{RS}) acting on the Boson Fock space. 

Let $H$ be a Hilbert space and $\mathcal{F}_s(H)$ be the Boson Fock space over $H$. Let $A$ be a contraction on $H$, i.e., a linear transform of norm smaller than or equal to $1$. Then there is a unique contraction (Corollary \RomanNumeralCaps{1}.15, \cite{S}) $\Gamma(A)$ on $\mathcal{F}_s(H)$ so that
\begin{equation}
\label{GammaA}
\Gamma(A) \restriction_{S_m H^{(m)}} = A \otimes \cdots \otimes A \qquad \text{($m$ times)},
\end{equation}
where the operator $A \otimes \cdots \otimes A$ is defined on each basis element $\psi^{(m)}= \psi_{i_1} \otimes \cdots \otimes  \psi_{i_m}$ of $S_m H^{(m)}$ as 
$$
(A \otimes \cdots \otimes A)(\psi^{(m)}):= (A\psi_{i_1}) \otimes \cdots \otimes (A \psi_{i_m}),
$$
and equal to the identity when restricted to $H^{(0)}$.
In order to prove the above existence of $\Gamma(A)$, the estimate $\|\Gamma(A)\restriction_{S_m H^{(m)}}\| \leq \|A\|^m$ is first \linebreak showed in \cite{S}. This allows to extend the operator $\Gamma(A)$ to the Boson Fock  space by continuity, and by remaining a contraction.
In the case $A=e^{-Ct}$ and $H=\R^d$, the operator $\Gamma(A)$ will be useful to show the link between the Fokker-Planck solution operator $e^{-Lt}$ and the second quantization operators, defined in the following way:
\begin{definition}
Let $H$ be a Hilbert space. Let $A$ be an operator on $H$ (with domain $G(A)$). The operator $d\Gamma(A)$ is defined as follows: Let $G_m(A) \subseteq S_m H ^{(m)}$ be $G(A) \otimes \cdots \otimes G(A)$ and $G(d\Gamma (A)):= \foo_{m=0}^{\infty} G_m(A)$ (incomplete direct sum):
\begin{equation}
\label{defdgamma}
d\Gamma(A) \restriction_{S_m H^{(m)}}:=A \otimes \mathbb{1} \otimes  \cdots \otimes \mathbb{1} + \cdots + \mathbb{1} \otimes \cdots \otimes \mathbb{1} \otimes A, \quad \bea{m \ge 1,} 
\end{equation}
and $d\Gamma (A) \restriction_{H^{(0)}}:=0$.      
The operator $d \Gamma (A)$ is called the \emph{second quantization of $A$}.
\end{definition}
In \cite{S} the following property of the second quantization operator can be found (see \RomanNumeralCaps{1}.41):

Let $A$ generate a $\mathcal{C}_0$-contraction semigroup on $H$. Then the closure of $d\Gamma(A)$ generates a $\mathcal{C}_0$-contraction semigroup on $\mathcal{F}_s(H)$ and 
\begin{equation}
\label{relGamma}
e^{- d\Gamma(A) t }= \Gamma (e^{-At}) \qquad \forall t \geq 0.
\end{equation}
\end{subsection}

\subsection{Application to the operator $e^{-Lt}$}
In the last part of this section we will show that the Fokker-Planck operator $L$ is the second quantization of $C$. First, we shall identify the Hilbert space $L^2(\R^d, f_{\infty}^{-1})$ with a suitable Fock space.

The spectral decomposition and the tensor structure that we introduced in $\S 5$ suggest to consider the Boson Fock space over the finite dimensional Hilbert space $\R^d$, whose elements have components in the space of symmetric tensors $F^{(m)}$. Indeed, we can define an isomorphism $\Psi$ between $L^2(\R^d, f_{\infty}^{-1})$ and $\mathcal{F}_s(\R^d)$ as follows:

Let $f\in L^2(\R^d, f_{\infty}^{-1})$. As we saw in $\S4$, $f$ admits the decomposition $f(x)=\sum_{\bea{m=0}}^{\bea{\infty}} \sum_{\alpha \in S^{(m)}} d_{\alpha} g_{\alpha}(x)$, for \bea{suitable} coefficients $d_{\alpha} \in \R$. For each $m \geq 1$, we define the symmetric tensor $\widetilde{D}^{(m)} \in F^{(m)}$ with components $\widetilde{D}^{(m)}_{\alpha}:= d_{\alpha} \frac{\sqrt{m!}}{\gamma_{\alpha}} \in \R$ (see \eqref{defDm}), $\forall \alpha \in S^{(m)}$. For $m=0$ we choose $\widetilde{D}^{(0)}:=\langle f, f_{\infty} \rangle_{L^2(f_{\infty}^{-1})}$. Hence, by observing that $F^{(m)}= S_m H^{(m)}$, $H:=\R^d$, we define the isometry
\begin{equation}
\Psi: f \in L^2(\R^d, f_{\infty}^{-1}) \rightarrow \psi :=\{\widetilde{D}^{(m)} \}_{m=0}^{\infty} \in \mathcal{F}_s(\R^d).
\end{equation}
It remains to check that $\|\psi \|_{\mathcal{F}_s(\R^d)} < \infty$. This follows from the Plancherel's equality together with \eqref{normofDmlink}. It leads to
$$
\|f\|^2_{L^2(f_{\infty}^{-1})}=\sum_{m=0}^{\infty} \|\widetilde{D}^{(m)}\|^2_{\mathcal{F}}=\|\psi\|^2_{ \mathcal{F}_s(\R^d)}.
$$
Hence, up to an isomorphism, we can consider the FP-operator $L$ also as acting on the Fock space $\mathcal{F}_s(\R^d)$. We conclude the section with the next proposition that allows to write $L$ in the second quantization formalism.
\begin{prop}\label{L2ndQuant}
Let $L$ be the Fokker-Planck operator defined in \eqref{NormalizedFPwithC} and let $C \in \R^{d \times d}$ be its corresponding drift matrix. Then, $L$, now considered as acting on $\mathcal{F}_s(\R^d)$, is the second quantization of $C$, considered as an operator from the Hilbert space $\R^d$ to itself, i.e., $L=d\Gamma(C)$.
\end{prop}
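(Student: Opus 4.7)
The plan is to verify $L = d\Gamma(C)$ block-by-block, exploiting the parallel decompositions $\mathcal{H} = \bigoplus_{m\ge 0} V^{(m)}$ from Section 4 and $\mathcal{F}_s(\R^d) = \bigoplus_{m\ge 0} F^{(m)}$. Under the isometry $\Psi$ the subspace $V^{(m)}$ is identified with $F^{(m)} = S_m(\R^d)^{\otimes m}$, with the rescaling $\widetilde D^{(m)} = \sqrt{m!}\, D^{(m)}$ between the tensor defined in \eqref{defDm} and the one entering the definition of $\Psi$. By Proposition \ref{propVminv}, the Fokker-Planck semigroup $e^{-Lt}$ respects this block structure, and so does $d\Gamma(C)$ by its very definition \eqref{defdgamma}. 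The $m=0$ component is trivial on both sides since $Lg_0=0$ and $d\Gamma(C)\restriction_{H^{(0)}}=0$.

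The cleanest way to compare the two operators is to compare the semigroups they generate, using the rank-1 decomposition of symmetric tensors. By Theorem \ref{evolutionDmt} and Lemma \ref{fact1}, for any initial datum $D^{(m)}(0) = \sum_{k=1}^s \lambda_k v_k^{\otimes m} \in F^{(m)}$ the Fokker-Planck evolution is
\[
D^{(m)}(t) = \sum_{k=1}^s \lambda_k (e^{-Ct}v_k)^{\otimes m} = e^{-Ct}\odot^m D^{(m)}(0).
\]
On the other hand, the defining property \eqref{GammaA} gives $\Gamma(e^{-Ct})(v^{\otimes m}) = (e^{-Ct}v)^{\otimes m}$, so by Proposition \ref{TEOdec} and linearity, $\Gamma(e^{-Ct})\restriction_{F^{(m)}} = e^{-Ct}\odot^m \cdot$ as well. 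The scaling $\sqrt{m!}$ in $\Psi$ commutes with this linear action, so $\Psi \circ e^{-Lt}\circ \Psi^{-1}\restriction_{F^{(m)}} = \Gamma(e^{-Ct})\restriction_{F^{(m)}}$ for every $m$ and every $t\ge 0$.

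Assembling over $m$ yields $\Psi\circ e^{-Lt}\circ \Psi^{-1} = \Gamma(e^{-Ct})$ on $\mathcal{F}_s(\R^d)$. Because $C$ is positive stable (Proposition \ref{Prop2.5}) and $\|e^{-Ct}\|\le 1$ by \eqref{h1}, $C$ generates a $C_0$-contraction semigroup on $\R^d$, so the formula \eqref{relGamma} applies and gives $\Gamma(e^{-Ct}) = e^{-d\Gamma(C)t}$. Matching generators of the two $C_0$-semigroups on $\mathcal{F}_s(\R^d)$ concludes $L = d\Gamma(C)$ as claimed.

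I do not expect a real obstacle. The only point requiring attention is bookkeeping of the various normalization constants linking $d_\alpha$, $D^{(m)}_\alpha$, and $\widetilde D^{(m)}_\alpha = d_\alpha \sqrt{m!}/\gamma_\alpha$, to make sure that $\Psi$ really intertwines the two semigroups rather than an off-by-a-factor version of them. Since $e^{-Ct}\odot^m$ is linear in its argument, the factor $\sqrt{m!}$ passes harmlessly through the evolution, so this bookkeeping is routine and the proposition follows as a direct corollary of Theorem \ref{evolutionDmt} together with \eqref{GammaA}--\eqref{relGamma}.
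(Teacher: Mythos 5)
Your proposal is correct and follows essentially the same route as the paper: reduce via \eqref{relGamma} to comparing the two semigroups, use the factored evolution formula \eqref{EQ1641} (Theorem~\ref{evolutionDmt} plus Lemma~\ref{fact1}) together with \eqref{GammaA} to match them block by block on each $F^{(m)}$, observe the $\sqrt{m!}$ rescaling in $\Psi$ commutes with the linear action $e^{-Ct}\odot^m$, and conclude by uniqueness of the generator of a $\mathcal{C}_0$-semigroup. The only cosmetic difference is that you spell out the rank-1 decomposition and linearity argument that the paper leaves implicit.
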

\begin{proof}
Due to the relation \eqref{relGamma}, it is sufficient to prove that the \bea{FP-propa\-gator} $e^{-Lt}$ (considered on $\mathcal{F}_s(\R^d)$) satisfies the equality
\begin{equation}
\label{Lsec}
e^{-Lt}=\Gamma(e^{-Ct}), \qquad \forall t \geq 0.
\end{equation}
\bea{Equivalently,} on each $S_m H^{(m)}$, $m\geq 1$, the formula
\begin{equation}
\label{Lsec2}
e^{-Lt}(\psi^{(m)})=(e^{-Ct} \psi_{i_1}) \otimes \cdots \otimes ( e^{-Ct} \psi_{i_m}), 
\end{equation}
\bea{holds for every basis element $\psi^{(m)}=\bigotimes_{k=1}^m \psi_{i_k}$ of $F^{(m)}$}.

Given an initial condition $f_0 \in L^2(\R^d, f_{\infty}^{-1})$ and its corresponding solution $f(t)=e^{-Lt}f_0$ of \eqref{NormalizedFPwithC}, the isometry $\Psi\bea{: L^2(\R^d, f_{\infty}^{-1}) \rightarrow \mathcal{F}_s(H)}$ maps then \bea{as follows:}
$$\Psi f_0=\psi_0=\{\widetilde{D}^{(m)}(0)\}_{m=0}^{\infty}, \text{ \quad and \quad } \Psi f(t)=\psi(t)=\{ \widetilde{D}(t)^{(m)}\}_{m=0}^{\infty},
$$
respectively.
Then, the factored evolution formula \eqref{EQ1641} for $D^{(m)}(t)=\linebreak \sqrt{m!} \ \widetilde{D}^{(m)}(t)$ proves the equality \eqref{Lsec2}, for each $m \geq 1$. Since the generator of a $\mathcal{C}_0$-semigroup is unique, we obtain $L=d \Gamma (C)$.
\end{proof}
While $C$ is a bounded operator with domain $G(C)=\R^d$, its second quantization $d\Gamma (C)$ is unbounded with dense domain $G(d\Gamma(C)) \subsetneq \mathcal{F}_s(H)$, just like $L$ is unbounded on $L^2(\R^d, f_{\infty}^{-1})$.

Finally, our main result, Theorem \ref{maintheo} reads in the language of second quantization 
\begin{equation}
\Big\|e^{-d\Gamma(C) t} \restriction_{\displaystyle \bigoplus_{\bea{m \geq 1}}S_m H^{(m)}}  \Big\|_{\mathcal{B}(\mathcal{F}_s(H))}=\|e^{-Ct}\|_{\R^{d \times d}}, \qquad t \geq 0.
\end{equation}
Note that the restriction to $\displaystyle\bigoplus_{\bea{m \geq 1}}S_m H^{(m)}$ corresponds to the restriction to $V_0^{\perp}$ in \eqref{mainequality}, the orthogonal of the steady state $f_{\infty}$.

\bea{We remark that Proposition \ref{L2ndQuant} is a special case of Theorem 1 in \cite{CMG}, there formulated for an infinite dimensional Hilbert space setting. We still include a proof here to make this paper self-contained. Moreover, an explicit computation of the spectrum and second quantization formalism for FP-equa\-tions in the infinite dimensional setting were given in \cite{vNe}.
}

\begin{remark}
Many aspects of the above analysis seem to rely importantly on the explicit spectral decomposition of the FP-operator in \S 4.1, i.e. knowing the FP-eigenfunctions (as Hermite functions). We remark that this situation in fact carries over to FP-equations with linear coefficients plus a \emph{nonlocal perturbation} of the form $\theta_f:=\theta*f$ with the function $\theta(x)$ having zero mean, see Lemma 3.8 and Theorem 4.6 in \cite{AS}. For such nonlocally perturbed FP-equations, surprisingly, one still knows all the eigenfunctions as well as its (multi-dimensional) creation and annihilation operators.
\end{remark}
\appendix
\section{Deferred proofs}

\begin{proof}[Proof of Lemma~\ref{fact1}]
We compute the components of the l.h.s.\ of \eqref{decomwithB}. Using \eqref{decompD} with $v_k=(v_i^{(k)})\in\R^d$, we have for any $(i_1,..,i_m) \in \langle d\rangle^m$:
\begin{align*}
(B \odot^m D)_{i_1...i_m}= & \sum _{j_1,...,j_m=1}^d B_{i_1j_1}\cdots B_{i_mj_m} D_{j_1...j_m}=
\sum _{j_1,...,j_m=1}^d B_{i_1j_1}\cdots B_{i_mj_m} \sum_{k=1}^s \lambda_k v^{(k)}_{j_1}\cdots v^{(k)}_{j_m} \\                                         
=& 
\sum_{k=1}^s \lambda_k (Bv_k)_{i_1} \cdots (Bv_k)_{i_m}  
= \left (\sum_{k=1}^s \lambda_k (Bv_k)^{\otimes ^m} \right)_{i_1\cdots i_m},
\end{align*}
concluding the proof.
\end{proof}

\begin{proof}[Proof of Lemma~\ref{fact2}]
By definition,
\begin{align*}
\langle v_1 \otimes \cdots \otimes v_m, v_{m+1} \otimes \cdots \otimes v_{2m} \rangle _{\mathcal{\F}}=
& \sum_{i_1,...,i_m=1}^d (v_1 \otimes \cdots \otimes v_m)_{i_1...i_m} (v_{m+1} \otimes \cdots \otimes v_{2m})_{i_1...i_m}
\\
=& \sum_{i_1,...,i_m=1}^d v^{(1)}_{i_1}\cdots v^{(m)}_{i_m}v^{(m+1)}_{i_1}\cdots v^{(2m)}_{i_{m}}
\\
=& \left( \sum_{i_1=1}^d v^{(1)}_{i_1}v^{(m+1)}_{i_1} \right) \cdots  \left( \sum_{i_m=1}^d v^{(m)}_{i_m}v^{(2m)}_{i_m} \right )
\\
=& \langle v_1,v_{m+1} \rangle \cdots \langle v_m,v_{2m}\rangle .
\end{align*}
\end{proof}

\begin{proof}[Proof of Lemma~\ref{step2}]
We have
\begin{align*}
\langle A, B \odot A \rangle _{\mathcal{F}}= &\sum_{i_1,...,i_m=1}^d A_{i_1...i_m}( B \odot A)_{i_1...i_m}= \sum_{j_1,i_1,...,i_m=1}^d A_{i_1...i_m} B_{i_1j_1} A_{j_1i_2...i_m}
\\
=& \sum_{i_2,...,i_m=1}^{d} \langle x^{(i_2...i_m)}, B x^{(i_2...i_m)} \rangle,
\end{align*}
where, for $i_2,...,i_m$ fixed, $x^{(i_2...i_m)}_{i_1}:=A_{i_1i_2...i_m}$ are vectors in $\R^d$. The claim then follows from $B \geq 0$.
\end{proof}

\begin{proof}[Proof of Lemma~\ref{LEM4}]
First consider the \textit{Case $k=1$.}
We have
\begin{align}
\label{App1}
\|B \odot A \|^2_{\mathcal{F} }= & \sum_{i_1,...,i_m=1}^d (\sum_{j_1=1}^d B_{i_1j_1}A_{j_1i_2...i_m})^2= \sum_{i_2,...,i_m=1}^d \|B x^{(i_2...i_m)}\|^2 
\\
\leq & \sum_{i_2,...,i_m=1}^d \|B\|^2 \|x^{(i_2...i_m)}\|^2  = \|B\|^2 \sum_{i_1,...,i_m=1}^d (x^{(i_2...i_m)}_{i_1})^2 
\\
=& 
\|B\|^2 \|A\|_{\mathcal{F}}^2
\end{align}
where, for $i_2,...,i_m$ fixed, $x^{(i_2...i_m)}_{j_1}:=A_{j_1i_2...i_m}$ are vectors in $\R^d$. 
Note that the estimate \eqref{App1} would hold as well if the matrix-tensor product does not operate on the first index (as in $B \odot A$), but on the $j-$th index, with some $1\leq j \leq m$. Then \eqref{EQLEM4} follows by iterated applications of \eqref{App1}.
\end{proof}

\begin{proof}[Proof of Proposition \ref{Prop319}]
(a) We recall that Theorem \ref{maintheo} and \eqref{h1} imply
\begin{equation*}
\tilde{h}(t)=\|e^{-Lt}\|_{\bea{\mathcal{B}(V_0^{\perp})}}=\|e^{-Ct}\|_2=h(t) \leq 1, \quad t \geq 0.
\end{equation*}
Then, Theorem \ref{TEOnormm} implies \eqref{EQTEOnormm}, $\forall m \geq1$. From \eqref{decompoff} we recall 
\begin{equation}
\label{4stars}
\left| \left| \frac{f(t)}{f_{\infty}} \right| \right|_{L^2(f_{\infty})} ^2=\|f(t)\|_{\mathcal{H}}^2= \sum_{m \in \N_0} \| \tilde d^{(m)}(t)\|^2= \sum_{\beta \in \N_0^d} | \tilde{d}_{\beta}(t)|^2,
\end{equation}
and $ \frac{f(t)}{f_{\infty}}= \sum_{\beta \in \N_0^d} \tilde{d}_{\beta}(t) \hat{g}_{\beta}$, where $\hat{g}_{\beta}:= \frac{\tilde{g}_{\beta}}{f_{\infty}}$ is an orthonormal basis of $L^2(f_{\infty})$.

Using \eqref{hermiteingalpha} and the formula $H^{'}_n(x)=n H_{n-1}(x)$ for Hermite polynomials we compute, for any $\beta \in \N_0^d$,
$$
\partial_{x_j} \hat{g}_{\beta}= \frac{\beta_j H_{\beta_j -1}(x_j)}{\sqrt{\beta!}} \displaystyle \prod_{i \neq j} H_{\beta_i}(x_i), \text{ \  and  \ } \| \partial_{x_j} \hat{g}_{\beta}\|_{L^2(f_{\infty})}=\sqrt{\beta_j},
$$
where we used $\|H_n\|_{L^2(f_{\infty})}=\sqrt{n \ !}$ . This yields, with \eqref{EQTEOnormm} and \eqref{normofDmlink},
\begin{align}
\label{3stars}
\left| \left| \nabla \left(\frac{f(t)}{f_{\infty}} \right) \right| \right|_{L^2(f_{\infty})} ^2=& \sum_{\beta \in \N_0^d} |\tilde{d}_{\beta}(t)|^2 |\beta| = \sum_{m \in \N_0} m \| \tilde{d}^{(m)}(t)\|^2
\\
 \leq &  \sum_{m \in \N_0} m (\tilde{h}(t))^{2m} \|\tilde{d}^{(m)}(0)\|^2, \quad t>0. \nonumber
\end{align}
From the hypothesis on $\tilde{h}$,  we deduce $\tilde{h}(t) \leq 1- c_1 t^{\alpha}$ on $0 \leq t \leq \delta$ for some $0 < c_1 \leq c$ and some $\delta >0$. Then \eqref{3stars} can be estimated further by
$$
\sum_{m \in \N_0} m (1-c_1 t ^{\alpha})^{2m} \| \tilde{d}^{(m)}(0)\|^2 \leq \frac{1}{e c_1} t^{-\alpha} \sum_{m \in \N_0} \| \tilde{d}^{(m)}(0)\|^2, \quad 0 \leq c_1 t^{\alpha} \leq 1.
$$
where we used the elementary inequality $m(1-c_1 t^{\alpha})^{2m} \leq \frac{1}{e c_1}t^{-\alpha}$, $m \in \N_0$.
The main assertion of part (a) then follows from \eqref{4stars}.

Finally we turn to the optimality of $\alpha$: If \eqref{RegH1} would hold for all $f_0\in \mathcal H$ with some $\alpha_1\in(0,\alpha)$, then part (b) of this proposition would imply $\tilde{h}(t)\le 1-c_2t^{\alpha_1}$. But this would contradict the assumption $\tilde{h}(t)=1-ct^\alpha+o(t^\alpha)$. Hence, $\alpha/2$ is indeed the minimal regularization exponent in \eqref{RegH1}.
\\
\\
\medskip (b) For $f_0 \in V^{(m)}, \ m \in \N$ we compute, by using \eqref{3stars} and \eqref{RegH1},
\begin{equation}
\label{step1Proof3.20}
\left| \left| \nabla \left(\frac{f(t)}{f_{\infty}} \right) \right| \right|_{L^2(f_{\infty})} ^2 =m \  \| \tilde{d}^{(m)}(t)\|^2 \leq \tilde{c}^2 t^{-\alpha} \|\tilde{d}^{(m)}(0)\|^2, \qquad 0 < t \leq \delta.
\end{equation}
Then, by taking in \eqref{step1Proof3.20} the supremum w.r.t.\ the set $\{ 0 \neq \tilde{d}^{(m)}(0) \in \R^{\Gamma_m} \}$ and using \eqref{supDm}, \eqref{normofDmlink} we obtain the family of estimates 
\begin{equation}
\label{step2Proof3.20}
  \tilde{h}(t) ^{2m}= \sup_{0 \neq D^{(m)} \in F^{(m)}} \frac{\|D^{(m)}(t)\|^2_{\mathcal{F}}}{\|D^{(m)}\|^2_{\mathcal{F}}}= \sup_{0 \neq \tilde{d}^{(m)}(0) \in \R^{\Gamma_m}} \frac{\|\tilde{d}^{(m)}(t)\|^2}{\|\tilde{d}^{(m)}(0)\|^2}
\leq \frac{\tilde{c}^2}{m} t^{-\alpha}, 
\end{equation}
with $m \in\N,  \ 0 < t \leq \delta$.

Next we will show that this family of estimates for $\tilde{h}(t)$ implies $\tilde{h}(t) \leq 1-c _2t^{\alpha}$ for $0\leq t \leq \delta_2$, with some $c_2>0,  \ \delta _2>0$ (see  Figure \ref{plot2} for the case $\alpha=1$).
For each $m\in\N$ and $t \in I_{\delta}:=(0, \delta]$, we rewrite \eqref{step2Proof3.20} as
\begin{align}
\label{HAndE1}
\tilde{h}(t) \leq  \left ( \frac{\tilde{c}}{\sqrt{m}} t^{-\frac{\alpha}{2}} \right)^{\frac{1}{m}}
=e^{-\frac{1}{2}  \frac{\log (  \bar{c} m t^{\alpha}  )}{m}} =: g(m;t),
\end{align}
with $\bar{c}:=\tilde{c}^{-2}$.
For $t\in I_\delta$ fixed, we now consider the function $g(\mu;t)$ with continuous argument $\mu>0$. $g(\cdot;t)$ has its unique minimum at $\mu_0(t):=\frac{e}{\bar c}t^{-\alpha}$ and it is strictly decreasing on $(0,\mu_0(t))$.

To estimate the minimum of $g$ for the discrete argument $m\in\N$, we consider:
For $0\le t\le t_1:=\big(\frac{e-2}{\bar c}\big)^{1/\alpha}$ we have
$$
  \frac{2}{\bar c}t^{-\alpha} \le \Big\lceil\frac{2}{\bar c}t^{-\alpha}\Big\rceil < \frac{2}{\bar c}t^{-\alpha}+1
  \le \frac{e}{\bar c}t^{-\alpha}=\mu_0(t),
$$
with $\lceil\cdot\rceil$ denoting the ceiling function. We choose the index $m(t):=\big\lceil\frac{2}{\bar c}t^{-\alpha}\big\rceil \in\N$ and use the monotonicity of $g(\cdot;t)$ on $(0,\mu_0(t)]$ to estimate:
$$
 \tilde{h}(t)\leq \min_{m \in \N }{g(m;t)} \leq g(m(t);t) \le g\big(\frac{2}{\bar c}t^{-\alpha};t\big) = e^{-2c_2t^\alpha},
$$
with $c_2:=\frac{\log(2) \bar c}{8}>0$.

With the elementary estimate $e^{-2c_2y}\le 1-c_2y$ on some $[0,t_2]$, we obtain 
$$
  \tilde{h}(t) \leq e^{-2c_2 t^{\alpha} } \leq 1- c_2 t^{\alpha}, \qquad t \in [0, \delta_2],
$$
with $\delta_2:=\min\{t_1,t_2^{1/\alpha}\}$.

Finally we turn to the minimality of $\alpha$: If $\tilde{h}$ would even satisfy the decay estimate $\tilde{h}(t)\le 1-\tilde c_2 t^{\alpha_1}$ with some $\alpha_1\in(0,\alpha)$ and $\tilde c_2>0$, then (the proof of) part (a) of this proposition would imply the regularization estimate \eqref{RegH1} with the exponent $\alpha_1/2$. But this would contradict the assumption on $\alpha$ being minimal in that estimate.
\end{proof}

\begin{figure}[H]
\includegraphics[scale=0.6]{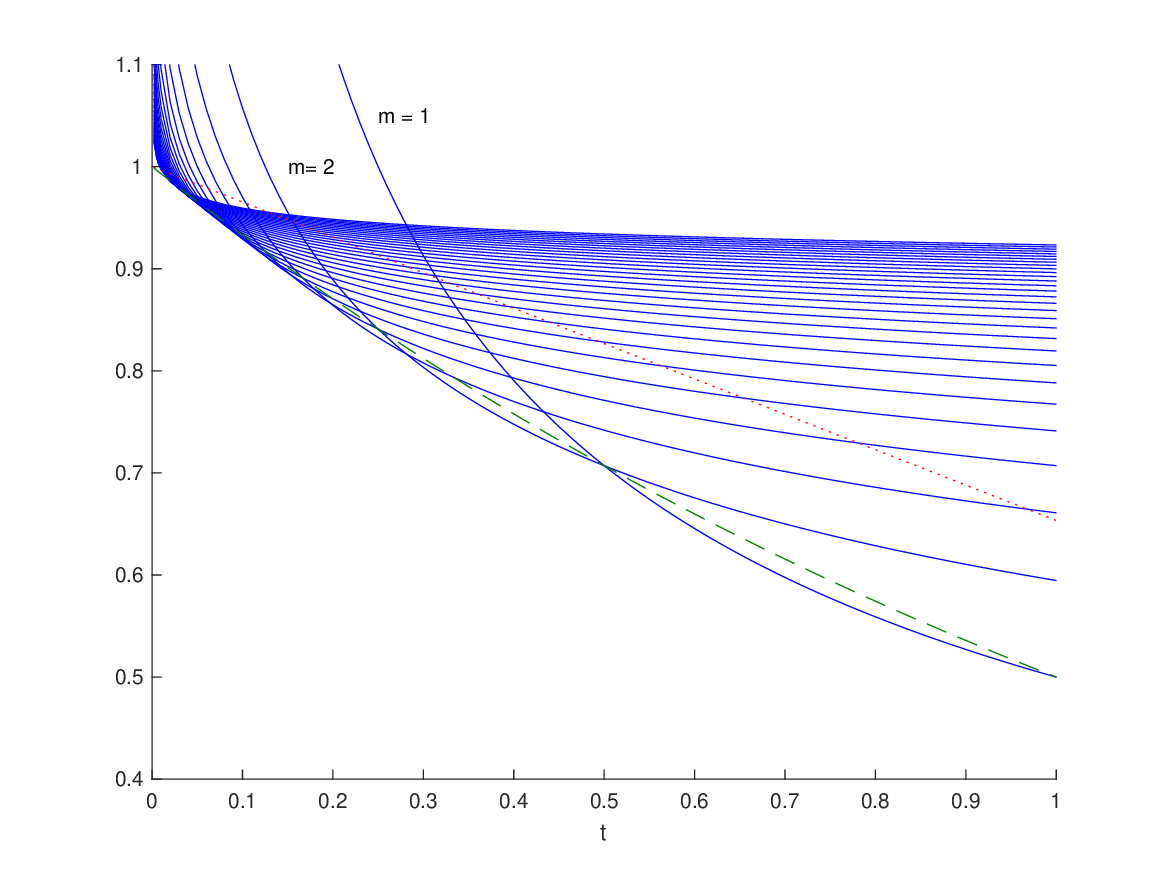}
\caption{The family of decay estimates $h(t) \leq g(m;t)$, $m \in \N$ with $\alpha=1$, $\bar{c}=4$ (solid, blue curves) implies $h(t) \leq e^{-2c_2 t}$, (dashed, green curve), and hence $h(t) \leq 1-c_2t$ (dotted, red line).}
\label{plot2}
\end{figure}

\section*{Acknowledgement}
The authors were partially supported by the FWF (Austrian Science Fund) funded SFB \#F65 and the FWF-doctoral school W 1245. The first author acknowledges fruitful discussions with Miguel Rodrigues that led to Proposition \ref{Prop319}(b), \bea{as well as with Wolfgang Herfort}.

\end{document}